\newcommand{\comment}[1]{}
\newcommand{\complexes}{{\bf C}}
\newcommand{\reals}{{\bf R}}
\newcommand{\supp}{\mathop{\rm supp}\nolimits }
\newcommand{\BibTeX}{{\rm B\kern-.05em{\sc i\kern-.025em b}\kern-.08em     
    T\kern-.1667em\lower.7ex\hbox{E}\kern-.125emX}}
\newcommand{\re}{\mathop{\rm Re}\nolimits} 
\newcommand{\im}{\mathop{\rm Im}\nolimits} 
\newcommand{\rv}[1]{{\color{blue} #1}}
\renewcommand{\rv}[1]{#1}
\newenvironment{remark}[1][Remark]{\begin{trivlist}\item[\hskip \labelsep
{\it #1. }]}{  \goodbreak \end{trivlist}}  
\numberwithin{equation}{section}
\newtheorem{theorem}[equation]{Theorem}
\newtheorem{proposition}[equation]{Proposition}
\newtheorem{lemma}[equation]{Lemma}
\renewcommand{\thetheorem}{\arabic{section}.\arabic{theorem}}
\renewcommand{\theequation}{\arabic{section}.\arabic{equation}}
\begin{document}

\title{Recovering nonsmooth coefficients for higher-order perturbations of a 
polyharmonic operator} 

\author{R.M.~Brown\footnote{R.~Brown is partially supported by a grant from the Simons
Foundation (\#422756).}
\footnote{Corresponding author, rbrown@uky.edu}
\\ Department of Mathematics\\ University of Kentucky \\ Lexington, Kentucky, USA
\and 
D.~Faraco\footnote{D.Faraco is partially supported by the Severo Ochoa Programme
for Centers of Excellence Grant CEX2019-000904-S funded by MCIN/AEI/10.13039/501100011033, 
by the  ERC Advanced Grant 834728, by  project  PID2021-124195NB-C32. and 
by  CM through the Line of Excellence for University Teaching Staff between CM and UAM}\\
Departamento de Matem\'aticas \\ Universidad Aut\'onoma de Madrid,\\
 Instituto de
Ciencias Matem\'aticas, CSIC-UAM-UC3M-UCM\\
Madrid, Spain
\and
L.D.~Gauthier\\Department of Mathematics \\ Carthage College \\ Kenosha, Wisconsin, USA   }
\date{}
\maketitle


\date{}
\maketitle

\newpage

\begin{abstract}
We consider an inverse problem for a higher order elliptic operator 
where the principal part is the 
polyharmonic operator $(-\Delta)^m$ with $ m \geq 2$. We show 
that the map from the coefficients to a 
certain bilinear form 
is injective. We have a particular focus on obtaining these results under lower
regularity on the coefficients. It is known that knowledge of this 
bilinear form is equivalent to a knowledge of 
a Dirichlet to Neumann map or the Cauchy data for solutions. 
\end{abstract}

\section{Introduction}\label{Intro}

In this paper, we consider inverse boundary value problems for 
 operators with principal part the $2m$th order polyharmonic operator 
 with $m\geq 2$  on a 
 domain $ \Omega \subset \reals ^d$ with $d\geq 3$: 
\begin{equation}
\label{e:IntroOp}
 L=(-\Delta ) ^ m + \sum _ { k=0}^m \sum _ { |\alpha |= k} A^{(k)}_ \alpha D^ \alpha .
 \end{equation}
Without loss of generality, we assume that the array  of coefficients 
$A^{(k)}$ is symmetric in the sense that the value of a coefficient $ A_\alpha^{(k)}$ 
is not changed when we permute the indices  in $ \alpha$.  We will see that this
array may be viewed as a symmetric tensor.

Our main result is the following theorem giving conditions which guarantee 
that the coefficients in an operator \eqref{e:IntroOp} are determined by boundary 
information.   We refer the reader to later sections for explanations 
of the notation that appear in this theorem. 

\begin{theorem}
    \label{t:Main}
    Let $L_1$  and $L_2$ be as in \eqref{e:IntroOp} with coefficients 
    $A^{(k)}_ \ell =(  A^{(k)} _ {\ell,\alpha} )_ { |\alpha| =k}$, 
    $\ell = 1,2$, $k=0,\dots, k_0$ and that the coefficients $A^{(k)}_\ell= 0$ 
    for $ k > k_0$. 
    We give two cases for parameters that we will use in our 
    hypotheses on the coefficients: 
    \begin{enumerate}[label=(\alph*)]
        \item $k_0= \lfloor m/2 \rfloor -1$ and 
    $m/2 + 1/2 <  s< m/2+1$,   
    \item $m$ odd, $k_0= m/2 -1/2$, $ s= m/2 + 1/2$,
    \end{enumerate}
    We assume that the coefficients 
    $A_\ell^{(k)} \in \tilde W ^ { k-s, p} (\Omega)$ for $k=0,\dots, k_0$, 
    for $s$ as above and $p$ satisfying $p\geq 2$ and $ 1/p< (m-s)/d$.   
    Assume that the bilinear forms for the operators are equal 
    as defined in \eqref{e:form}. Then the coefficients 
    $ A_1^ {(k)} = A_2^{(k)}$ for $k =0, \dots , k_0$. 
\end{theorem}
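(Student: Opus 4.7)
The plan is to follow the Calder\'on/Bukhgeim program using complex geometric optics (CGO) solutions, adapted to the polyharmonic principal part and the distributional regularity of the coefficients. First I would convert the hypothesis that the bilinear forms coincide into an integral identity of the form
\[
\sum_{k=0}^{k_0} \sum_{|\alpha|=k} \bigl\langle (A_1^{(k)}-A_2^{(k)})_\alpha,\, D^\alpha u_1 \cdot u_2 \bigr\rangle = 0,
\]
valid for every pair $u_1, u_2 \in H^m(\Omega)$ with $L_1 u_1 = 0$ and $L_2^* u_2 = 0$. The duality pairing sits between $\tilde W^{k-s,p}(\Omega)$ and $W^{s-k,p'}(\Omega)$; the Sobolev condition $1/p < (m-s)/d$ together with $p\ge 2$ guarantees that the product $D^\alpha u_1 \cdot u_2$ lands in $W^{s-k,p'}$ for $u_1, u_2$ of the type we will use, so the identity makes genuine sense.

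Next I would build CGO solutions. Choose $\zeta_1, \zeta_2 \in \mathbf{C}^d$ with $\zeta_j \cdot \zeta_j = 0$ and $\zeta_1 + \zeta_2 = i\xi$, for the fixed Fourier frequency $\xi \in \reals^d$ that we want to probe. Seek $u_j(x) = e^{x\cdot\zeta_j}(P_j(x) + r_j(x,\zeta_j))$, where $P_j$ is a polynomial amplitude of degree at most $m-1$. Since $(-\Delta)^m$ conjugated by $e^{x\cdot\zeta}$ has symbol $(\zeta\cdot\zeta + 2\zeta\cdot\xi - |\xi|^2)^m$ which vanishes to order $m$ at $\xi=0$ when $\zeta\cdot\zeta=0$, the amplitude $P_j$ can absorb the lower order derivatives exactly. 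The remainder $r_j$ is obtained by inverting the conjugated operator via a Faddeev-type Green's function for $(-\Delta)^m$, and the key estimates state that $\|D^\beta r_j\|$, measured in norms dual to those of the coefficients, decays like a negative power of $|\zeta_j|$. In the borderline case (b), where $s = m/2 + 1/2$ is exactly critical, sharper $L^p$ versions of these mapping estimates are required.

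With the CGO solutions in place, substitute into the integral identity and track the asymptotics as $|\zeta_j|\to\infty$. Because each factor $D^\alpha u_1$ produces a polynomial in $\zeta_1$ of order $|\alpha|$ plus error, the leading term takes the form
\[
\sum_{k=0}^{k_0} \sum_{|\alpha|=k} \widehat{(A_1^{(k)}-A_2^{(k)})_\alpha}(\xi)\, \mathcal{P}_\alpha(\zeta_1,\zeta_2; P_1,P_2) = o(1),
\]
where $\mathcal{P}_\alpha$ is a polynomial in $\zeta_1, \zeta_2$ (and the amplitudes) of pure order $k$ in $\zeta_1$. Grouping terms by homogeneity in $|\zeta|$ lets us peel off the top order $k_0$ first: the coefficient of the highest power must vanish, which, after varying the direction of $\zeta_1,\zeta_2$ on the complex null cone and varying the polynomial amplitudes $P_1,P_2$, forces $\widehat{A_1^{(k_0)}}(\xi) = \widehat{A_2^{(k_0)}}(\xi)$ as symmetric tensors. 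Subtracting this contribution and iterating down through $k=k_0-1, k_0-2, \ldots, 0$ gives equality of all Fourier transforms, and hence all coefficients agree.

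The main obstacle is the remainder control at the prescribed low regularity. Because $A^{(k)}$ lives in a negative-order Sobolev space, one cannot simply bound the remainder contribution in $L^2$; instead one needs mapping properties of the Faddeev Green's function for $(-\Delta)^m$ into spaces like $W^{s-k,p'}$ with sharp dependence on $|\zeta|$. The critical case (b) is especially delicate because the usual argument loses a logarithm or a borderline Sobolev embedding. Verifying that $r_j$ really is lower order than $P_j$ after the pairing with $A^{(k)}$, uniformly across the range $0\le k\le k_0$ and for all admissible $\zeta$, is the technical heart of the proof.
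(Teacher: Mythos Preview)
Your outline captures the CGO skeleton correctly, but there are two genuine gaps where the actual proof requires substantial work you have not identified.

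\textbf{Remainder control via averaging.} You correctly flag remainder control as the technical heart, but the Faddeev Green's function with standard $L^p$ mapping estimates does not give enough decay at this regularity. The paper works instead in the Haberman--Tataru spaces $X^{\lambda}_{h\zeta}$ (semi-classical, with $h\to 0$ rather than $|\zeta|\to\infty$) and, crucially, uses an \emph{averaging estimate} over $(h,\theta)$: one integrates $\|A^{(k)}\|^2_{X^{-\lambda}_{\tau\zeta(\theta)}}$ over $\tau\in[h,2h]$ and $\theta\in[0,2\pi]$ to gain a factor $h^{2\sigma}$, then invokes Chebyshev to extract a ``magic sequence'' $(h_n,\zeta_n)$ along which the coefficient norms are $o(h_n^{3k/2-s-m/2+\sigma})$. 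Without this averaging step the error terms in the integral identity do not vanish in the limit, in either case (a) or (b). The constraint $s-k_0-2\sigma\ge 0$ appearing in the averaging lemma is exactly what forces the restrictions on $k_0$ and $s$ in the hypotheses.

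\textbf{Tensor recovery is not free.} You write that varying $\zeta$ on the null cone and varying the polynomial amplitudes ``forces $\widehat{A_1^{(k_0)}}(\xi)=\widehat{A_2^{(k_0)}}(\xi)$ as symmetric tensors.'' This is where the proposal is most seriously incomplete. After the limit, the top-order information is only $\hat A^{(k_0)}(\xi)\cdot\zeta^{k_0}=0$ for $\zeta\in\mathcal{V}_\xi$, and the kernel of $A\mapsto (A\cdot\zeta^k)_{\zeta\in\mathcal{V}_\xi}$ is \emph{not} zero: any tensor of the form $I_2\otimes B^{(k-2)}+\xi\otimes B^{(k-1)}$ lies in it. The paper proves this characterization (the Tensor Structure Theorem) by a dimension count, and then runs a nontrivial double induction: the polynomial amplitudes $(\omega\cdot x)^{r}$ contribute directional-derivative information $(\omega\cdot\nabla_\xi)^r\hat A^{(k)}(\xi)\cdot(\zeta^{k-j}\otimes\omega^j)=0$, and one iterates the structure theorem against these conditions to successively strip off the $I_2$ and $\xi$ factors until nothing remains. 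The restriction $r_1+r_2\le m$ (not $r_\ell\le m-1$ as you suggest) limits how many derivatives are available, which is why $k_0$ cannot exceed $\lfloor m/2\rfloor$; your sketch does not account for this obstruction or the algebraic machinery needed to overcome it.
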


To help put these results in perspective, we compare our work with earlier results in a few specific cases which
highlight the cases where our results improve on earlier work. 

\rv{
\begin{center}
   \begin{tabular}{cccc}
              &Assylbekov, Iyer \cite{MR4027047}& Brown, Gauthier \cite{MR4455267} & Theorem \ref{t:Main} \\ 
    $m=4$ &   $k_0=1$, $s<2$  & $k_0 =1$, $s<3$    & $k_0 =1$, $s<3$   \\
    $m=5$ & $k_0 =1$, $s<5/2$ & $k_0 =1$, $s< 7/2$ & $k_0 =1$, $s<7/2$ \\
 $m=5$    &                   &                    & $k_0 =2$, $s=3$   \\
    $m=6$ &  $k_0=1$, $s<3$   & $k_0=1$, $s<4$     &  $k_0= 2$, $s<4$
    \end{tabular}
\end{center}
}

The study of inverse problems for the biharmonic operator was 
initiated by Krupchyk, Lassas and Uhlmann 
\cite{MR2873860}
 who consider operators with terms involving derivatives up to first 
 order   and establish that 
the coefficients of the lower order terms are uniquely determined 
by measurements of  the Dirichlet to Neumann 
map for the operator on certain subsets of the boundary.  This work requires that the 
zeroth order term be bounded and measurable 
and the coefficients in the first order term must be Lipschitz.  
An extension of this result to 
polyharmonic operators appeared 
in a subsequent paper also by Krupchyk, Lassas and Uhlmann \cite{MR3118392}. 
 We  will limit our summary 
to papers focusing on 
the regularity needed to obtain a uniqueness result and 
refer the reader to the papers cited for additional background. 
The work of Assylbekov  \cite{MR3627033,MR3691848} 
considers operators where  the zeroth order 
term $A^{(0)}$ has $ s>-m/2$ derivatives in a certain 
$L^p$-space and the first order term 
is in a Sobolev space of order $ -m/2 + 1 $. Under these assumptions and with a 
restriction on the dimension, they are able to establish that the 
coefficients are determined by the Dirichlet to Neumann map for 
the operator. The restriction on the dimension was removed by 
Assylbekov and Iyer in a later paper \cite{MR4027047}. A 
related, interesting  result is due to Krupchyk and 
Uhlmann \cite{MR3484382} who consider operators with zeroth order 
term  $A^{(0)}$ in the $L^p$ space with 
$p>n/2m$.  While these potentials are 
smoother than those considered by 
Assylbekov and Iyer, it is interesting to note that 
the result of Krupchyk and Uhlmann does not 
follow from the work of Assylbekov and Iyer by Sobolev embedding. 

There has been much more progress in the study of 
regularity hypotheses for inverse problems involving 
operators where the principal part is the Laplacian. The 
fundamental work here is due to Sylvester 
and Uhlmann from 1987 \cite{SU:1987} who established a uniqueness 
result for the Calder\'on problem 
provided the coefficients in the operator are sufficiently smooth. 
A paper of Brown raises the question of reducing the 
regularity of the coefficient \cite{RB:1994a} and establishes 
a uniqueness result  for the Calder\'on problem which amounts 
to studying certain  operators of the form 
$ \Delta +q$ where the potential $q$ is a distribution of smoothness order $ -s$
with $-s>-1/2$.  An important advance 
was made by Haberman and Tataru who were able to treat 
operators of the form $ \Delta +q$ when the potential 
$q$ is a derivative of a uniformly continuous, compactly supported function 
\cite{MR3024091}.  The key 
innovation of their work is to introduce spaces which are defined 
using an operator that arises in 
the problem. These spaces are (essentially) the $X^s_{h \zeta}$ 
spaces which are defined later 
and play a key role in this paper. Rogers and Caro \cite{MR3456182} 
extends Haberman and 
Tataru's result to include operators where $q$ is the derivative of an 
$L^\infty$-function. More recently, Haberman \cite{MR3397029}  
established a uniqueness result on $ \reals ^d$ which allows 
the $q$ to be the derivative of an $L^d$ function when $d=3$ or $4$. 
This verifies a conjecture of Uhlmann. There are further 
results by  Ham, Kwon, and Lee \cite{MR4273826} and 
Ponce-Vanegas \cite{MR4112132} but the conjecture in higher 
dimensions remains open. 

\rv{
We believe that the study
of inverse boundary value problems for polyharmonic operators 
 is of interest for several reasons.
Polyharmonic operators with higher-order perturbations arise 
naturally in higher-order elasticity models, such as Mindlin’s strain-gradient 
elasticity \cite{MR160356} and its subsequent developments by  Aifantis \cite{Aifantis1992}
and Reiher and Bertram \cite{MR3873605}
The monograph of Gazzola, Grunau and Sweers \cite{MR2667016} gives applications of polyharmonic
operators in subjects ranging from elasticity to conformal geometry.
There is also interest in non-smooth coefficents for several reasons. 
 Allowing non-smooth coefficients
is of interest because not all physical models will give rise to equations with non-smooth coefficients. 
In addition, it is of interest to explore the limits of the techniques we have developed
to study these problems. This will either help us to understand the fundamental limitations in 
these problems or point us to areas where we can improve our tools for studying inverse boundary value
problems. 
}

One important inspiration for our work is several papers studying
inverse boundary value problems for 
operators where the principal part is the polyharmonic operator and
there are terms of order greater than one.  The first work here is due to 
Bhattacharyya and Ghosh 
\cite{MR3953481} who establish that for an operator with 
terms up to order $2$, the coefficients
are uniquely determined by a Dirichlet to Neumann map, at least 
when $m\geq 3$. A partial 
result is available when $m=2$.
\rv{
There has been much additional work on inverse problems for polyharmonic operators
that we will not attempt to summarize  here. We direct readers
to recent interesting work by 
 Bhattacharyya, Krishnan, and Sahoo \cite{MR4631015} 
that
establishes uniqueness for operators 
with terms of order up to $m-1$. This  work also considers a  
variant where we are only given 
information about the Dirichlet to Neumann map on part of the boundary. 
This has been extended to certain terms up to order $m$ by
Bhattacharyya and Kumar \cite{MR4723844}. 
These interesting papers do not consider non-smooth coefficients 
but they provide a good summary of recent progress for other aspects
of inverse boundary problems for polyharmonic operators. 
}

The research reported here builds most directly on a recent paper 
by two of us, Brown and Gauthier \cite{MR4455267}. 
These authors adapt the argument of Haberman and Tataru to the study 
of polyharmonic operators with 
lower order terms up to first order. For an operator 
with only a zeroth order term, we consider 
operators with coefficients that 
lie in a Sobolev space of order $s$ with $ s> -2$. This 
smoothness index may be close to optimal when the principal 
part is the biharmonic operator. In section we \ref{s:further} conjecture 
that the optimal result when the principal part is $ (-\Delta)^m$ will  be  $ s=-m$. 
We also give results for an operator with a first order term, but 
unfortunately the result as stated in our previous work 
\cite[Theorem 1.3]{MR4455267} is not right.  In our 
statement of that result, we neglected to include a condition that is needed 
in the proof of one of the estimates. As a result, when there is a first 
order term our result requires that $m \geq 4$.  With this additional 
assumption the proof given in our previous paper is correct and this
corrected result is contained in case a) of our main result below, 
Theorem \ref{t:Main} and a recently published correction \cite{MR4455267}. 
The additional observation is that we need the 
condition $s-k-2\sigma \geq 0$ which appears in \eqref{SupCond} 
of Proposition \ref{p:Avg} below
(see also our 
previous work \cite[Theorem 4.6]{MR4455267}).   

\rv{
The key advance in this work is to begin the study of inverse problems for
polyharmonic operators with non-smooth coefficients 
as in \eqref{e:IntroOp}  with terms involving derivatives of order greater than one.}
As part of our argument, we establish that the coefficients satisfy a series
of equations and develop arguments involving symmetric tensors to study these equations.
\rv{
There are significant overlaps in our method with the arguments using the momentum ray transform
in \cite{MR4631015,MR4723844,SS:2024}. Our work depends on solving a linearized Calderón problem
for a polyharmonic problem where a limited set of polyharmonic functions can be used. 
In a recent, 
interesting work, Sahoo and Salo  \cite{SS:2024} give  
an almost complete solution of the linearized Calder\'on problem 
for the polyharmonic operator by studying momentum ray transforms. 
The work by Bhattacharya and collaborators \cite{MR4631015,MR4723844}
mentioned above
also uses the momentum ray transform to study inverse boundary value problems
for operators with lower order terms up to order $m$. 
We choose to present our argument for several reasons. Some may find it helpful
to have a complete presentation without having to translate notation back and forth between
several papers. More importantly, in our argument the limited regularity of the coefficients
imposes restrictions on the expressions we can study. By giving the full argument, it is easier 
to make sure that we follow these restrictions. 
} 

The outline of the paper is as follows. In section \ref{s:notation}, 
we introduce notation
\rv{for tensors, definitions of function spaces we need 
and summarize some of the important properties of these spaces that we will use. 
Section \ref{s:CGO} describes the construction of the CGO solutions we use and shows how
the averaging method of Haberman and Tataru can be used to improve the asymptotic estimates 
for a sequence of these CGO solutions. This improvement is the key to our efforts to study
operators with less regular coefficients. 
In Section \ref{s:maineqn} we substitute our solutions
into a bilinear form related to the operators and obtain a set of
equations involving the Fourier transforms of the coefficients. 
Finally, in section \ref{s:uniq}
we show how these equations uniquely determine the coefficients.  This section
is largely algebraic in nature. 
The   subsection \ref{sec:tensorstructure} gives a result on the structure of 
a family of tensors that arise in our argument.  
Finally, Appendix \ref{s:tensorcomp} 
includes several  algebraic and combinatorial results related to symmetric 
tensors that are used in the main part of the paper. 
}

In most of our arguments we will suppress constants 
in inequalities by using the notation
$a\lesssim b$ to mean that there is a constant $C$ so that 
$ a \leq C b$. It will be an important
point that the constant is independent of the semi-classical 
parameter $h$, at least for $h$ small, as this will be 
needed when we take limits as $h $ tends to zero.

\section{Notation}
\label{s:notation}
We use $ D_k = \displaystyle\frac 1 i \frac \partial { \partial x_k}$ 
to denote a partial derivative and  
$ \alpha = (\alpha _1, \alpha _2, \dots , \alpha _j ) \in \{ 1, \dots, d\} ^ j $
to denote a multiindex. We let $ D= (D_1, \dots, D_d)$ 
and $ \nabla = iD$.  We write higher-order derivatives as  
$ D^ \alpha = D_{ \alpha _1} \dots D_{ \alpha _j}$. We use 
$|\alpha|=j$ to denote the number of elements in $ \alpha$.  The multiindices 
we define here will be useful when we formulate this equation using symmetric tensors, 
but they differ from the multiindices that usually appear in the 
pde literature. At one point 
in section \ref{sec:tensorstructure} we will need to use both types of multiindices.

Since the derivative $D^\alpha$ does not depend on the order 
that we take the derivatives, 
it is natural to assume that the coefficients 
$A_\alpha$ are unchanged by a permutation of the entries in $\alpha$. 
This leads us to view the coefficient as a symmetric tensor. 
For our purposes,  we 
define $A$,  a tensor of order  $j$,   as an array of 
complex numbers $A_\alpha$ 
indexed by multiindices $\alpha$ with $|\alpha|=j$. 
We let  $\Pi(j)$ be the group of permutations of $ \{ 1,\dots , j\}$.  
For $ \pi \in \Pi (j)$ and
$ \alpha$ a multiindex of length $j$,  we define $ \pi (\alpha) = 
( \alpha _{ \pi (1)}, \alpha _ { \pi (2) }, \dots \alpha _{ \pi (j)} )$. 
In this paper, our tensors will always be symmetric, which means that 
$ A_ \alpha$ is unchanged by permutation of the indices, that is, we have
$ A_ \alpha = A_{ \pi ( \alpha)}$ for all $ \pi \in \Pi(j)$ 
and all multiindices $\alpha$ of length $j$. 
We let 
$ S^j ( \complexes ^d)$ denote the collection of $j$th 
order symmetric tensors for $\complexes ^d$. 
If 
$ A = (A_ \alpha ) _{ | \alpha| = j }$ is an array  
indexed by $ \alpha \in \{ 1, \dots, d \} ^j$, then 
we define the symmetrization map $ \sigma$ by 
\[
\sigma(A)_\alpha = \frac 1 { j !} \sum _{ \pi \in \Pi(j)} A_ { \pi (\alpha)}.
\]

If $ v^j= ( v^j_1, \dots, v^j _d) \in \complexes^d$,
$ j =1 ,\dots, k$, we can define an element of 
$ S^k( \complexes ^d)$ by 
$$ v^1 \otimes v^2 \otimes \dots \otimes v^k 
= \sigma ( ( v^1_{ \alpha _1}v^2_{ \alpha _2 }
\dots v^k _{ \alpha _k} )_ { |\alpha | = k }). 
$$
More generally, if $ A\in S^j(\complexes^d)$ and 
$ B \in S^k ( \complexes ^d)$, we define the symmetric tensor product
$ A\otimes B \in S^ { j+k}( \complexes ^d)$ by 
\begin{equation} \label{eq:deftensorproduct}
(A\otimes B)_ \alpha = \sigma ( A_ {(\alpha_1, \dots , \alpha_j)} 
B _ { (\alpha _ {j+1},\dots ,\alpha _ { j+k} ) })\quad |\alpha|=j+k.
\end{equation}
For a tensor $A$, we will use $A^k$ to denote the $k$-fold tensor product, 
$ A^k = A\otimes A^ {k-1}$ and $ A^1 = A$. An important special case of this 
notation that we will use frequently is to 
write $ v^k$ for the $k$-tensor which is the $k$th symmetric tensor 
power of a vector $ v \in \complexes ^d$. 
We will often write $ A^{(k)}$ 
to stand for a tensor of order $k$. The parentheses serve to distinguish 
a tensor of order $k$ from a $k$th tensor power. 

As a generalization of the inner product, given  tensors $A$ of order $k$ and  
$B$ of order $j$ with $k\geq j$, 
we may define the contraction of $A$ by $B$ as the tensor 
$A\cdot B\in S^{ k-j} ( \complexes ^d)$ whose components are given by 
\begin{equation}
\label{e:ConDef}
(A\cdot B)_\alpha = \sum_{ |\beta |= j} A_{\alpha\beta} B_\beta. 
\end{equation}
Here, $ \alpha$ is a multiindex of length $k-j$,  $\beta$ is of length $j$ and
$\alpha\beta$ is the multiindex of length $k$ formed by concatenation. 
Note that this definition continues to make sense if the arrays $A$ 
and $B$ are not symmetric. At a few points in the arguments below, we will 
make use of this definition for nonsymmetric arrays. 
If the array  $A$ is symmetric, the tensor defined by 
$A\cdot B$ will be symmetric whether or not $B$ is symmetric. 
In the case when $ j=k$, then $(A,B) \to A \cdot B$ gives a bilinear, 
symmetric map from $ S^k ( \complexes^d) \times
S^k ( \complexes ^d) \to \complexes $. We will call this 
map the dot product of $A$ and $B$.

Finally,  we notice that we may identify the space  
$ S^k ( \complexes ^d)$ with the space of 
symmetric complex multilinear maps 
$ A : (\complexes ^d)^k\to \complexes$. If $A \in S^k ( \complexes ^d)$, 
then we define 
a symmetric multilinear map by defining 
\[
A[e_{\alpha_1},e_{\alpha_2},\dots, e_{\alpha _k}]=A_{\alpha_1,\dots, \alpha_k}. 
\]
Here, $ e_ {\alpha_j}$ is a standard basis vector for $ \complexes ^d$. 
Since the map  $A[\, \cdot\,]$ is to be multilinear, it 
is determined by its action on basis elements.
And if we have a symmetric multilinear map, the above 
relation serves to define a symmetric tensor.

We let $ X( \Omega, S^j(\complexes ^d))$ denote the 
functions or distributions 
in the space $X$ which take values in  $S^j ( \complexes ^d)$. 
More concretely, an 
element of 
$ X(\Omega, S^j ( \complexes ^d))$ is a symmetric array 
$ ( A_{ \alpha} ) _{ | \alpha | =  j}$ of distributions in the space $X$. 
If $u$ is a complex valued distribution then $D^j u = ( D^ \alpha u ) _ { |\alpha | = j }$ 
gives a 
function or distribution taking values in $ S^j ( \complexes ^d) $. 

With our notation for tensors, we can write our operator $L$ from \eqref{e:IntroOp} as 
\begin{equation}\label{e:OpDef}
Lu = ( -\Delta )^m u 
+ \sum _{ k =0 } ^ m A^{( k)} \cdot  D^ k u . 
\end{equation}
Here, $ A^{(k)}$ is a distribution taking values in 
$S^k( \complexes ^d)$ which gives the coefficients 
for the terms of order $k$. 
\rv{
To understand why we only consider symmetric coefficients, suppose that we have an array $A^{(k)}$ indexed by 
multi-indices of length $k$. Since $D^k u$ is symmetric, Lemma \ref{l:ConLem} gives that 
the expressions  $A^{(k)} \cdot D^ku$ and $\sigma(A^{(k)})\cdot D^ku$
are equal. Thus, allowing non-symmetric coefficients does not enlarge the class of 
operators being studied, but allows us to write an operator in many different ways. Since the operator and hence the family of
solutions only depend on the
symmetrization of the coefficients, we can only hope to recover the symmetrization of the coefficients from 
information derived from the family of solutions. 
}

Throughout this paper, we will assume that the domain $ \Omega \subset \reals ^d$ is a 
bounded, connected open set with 
smooth  boundary.
Due to the lack of smoothness of the coefficients, we will need to consider 
weak solutions of the equation $ Lu =f$ and carefully define the 
products $A^{(k)}\cdot D^ku$ appearing in the equation. For $ s \in\reals$ 
and $ 1<p< \infty$, we will use $ W^ { s,p} ( \reals ^d)$ to denote the Sobolev 
space of distributions for which the norm 
$ \| ( 1-\Delta ) ^ { s/2} u\|_{ L^p}$ is finite. For an open set $ \Omega$,  
$W^ { s,p} ( \Omega) $ will denote the restriction 
to $ \Omega$ of functions in  $ W^ { s,p } ( \reals ^d)$. 
\rv{
For $s$ a non-negative integer and $ 1< p < \infty$, the spaces $W^{s,p}( \reals ^d)$ coincide with the standard 
Sobolev spaces 
whose derivatives up to order $s$ lie in $L^p( \reals ^d)$ and the spaces for $s>0$ 
may be obtained by complex interpolation from the spaces with $s$ an integer.   
As long as the boundary of $\Omega$ is reasonable, these statements 
continue to hold with $ \reals ^d$ replaced by a domain $\Omega$.
}

The space 
$ W^ { s,p} _0( \Omega)$ 
will denote the closure of $C_c^ \infty( \Omega) $ in $W^ { s,p}( \Omega)$.  
Finally, we will define 
$$
\tilde W ^ { s,p} ( \Omega) = \{ u \in W^ {s,p}( \reals ^d) : 
\supp u  \subset \bar \Omega\}. 
$$
In this definition, $ \supp u$ denotes the support of $u$ 
as a distribution. Note that the notation 
$ \tilde W^ { s,p} ( \Omega)$ is misleading since 
the elements of this space may not be distributions on $ \Omega$.   
When the domain $ \Omega$ has 
a Lipschitz boundary, we have that $ \tilde W^ { -s,2}( \Omega) $ is the dual of the 
space $W^ { s,2} ( \Omega) $ (see, for example, the monograph of 
Mclean \cite[Theorem 3.30]{MR1742312}.) 
We will use $\langle \cdot , \cdot \rangle_{\Omega}$.
for the 
bilinear pairing between a Sobolev space on $ \Omega$ and its dual. 
Thus,  for example, when discussing weak solutions of the operator
\eqref{e:OpDef}, we will want to consider the   bilinear map from $
 W^ {-m,2}( \Omega)\times W^ { m,2}_0( \Omega) $ or 
 $\tilde W^ { -m,2}( \Omega) \times W^ { m,2}( \Omega)$.   

Using well-known results about products in Sobolev spaces, we have the estimate  
\cite[equation (2.3)]{MR4455267}  
\begin{equation}\label{e:trilinearSobolev}
    |\langle f D^ \alpha u , v \rangle _\Omega| \lesssim 
    \|f \| _{ \tilde W ^ { |\alpha |-m , t' }( \Omega)} 
    \| u \| _{ W^ { m,2 } ( \Omega) }\| v \| _ { W^ { m,2 } ( \Omega) }
\end{equation}
where $|\alpha | \leq m$ and $t'$ is the H\"older dual of  $t$ which is defined by 
\begin{equation} \label{e:expdef}
\begin{cases}
    \frac 1 t = 1 - \frac m d , \qquad & \frac m d <  \frac 1 2 \\
    \frac 1 t = \frac 1 2 , & \frac m d > \frac 12 \\
    \frac 1 t > \frac 1 2  , & \frac   m d = \frac 1 2 . 
\end{cases}
\end{equation}
The key step of the proof is the observation that if 
$u, v\in W^{m,2}(\Omega)$, then  the product $uv \in W^{m,t}(\Omega)$
(see \cite[\S 4.4.4, Theorems 1,2]{MR98a:47071}, for example).  
Our initial assumption is that the coefficients in the operator $L$ satisfy 
\begin{equation}
\label{e:coeffcond0}
    A^{(k)} \in \tilde W^ {k-m ,t'}( \Omega), \qquad k =0, \dots, m .
\end{equation}
The condition \eqref{e:coeffcond0} is needed to make sense of weak 
solutions, but we will impose 
additional hypotheses on the coefficients in 
\eqref{e:CoeffCond} and \eqref{e:CoeffSobCond} to 
study the inverse problem. In our proof of 
Theorem \ref{t:Main}, we will show that  \eqref{e:CoeffCond} and \eqref{e:CoeffSobCond}
follow from the hypotheses of this theorem. 

We also assume we have a bilinear form 
$ B_0 : W^ { m,2} ( \Omega) 
\times W^ { m,2} ( \Omega) \to \complexes$  given by 
$$
B_0 [ u,v] 
= \int _ \Omega \sum _ {| \alpha| = |\beta| = m} a_{ \alpha\beta } 
D^ \alpha u D^ \beta v \,dx
$$
which satisfies
$$
B_0 [ u,v ] = \int _ { \Omega} [(-\Delta ) ^ m u ]v\,dx ,\quad  u \in 
C^ \infty ( \Omega) , \  v \in C^ \infty _0 ( \Omega) . 
$$
For an operator $L$ as in \eqref{e:OpDef}, we define the bilinear form  
$B_L: W^ { m,2} ( \Omega) \times W^ { m,2} ( \Omega) \to \complexes$ 
by 
\begin{equation}
    \label{e:formdef} 
     B_L[u,v] = B_0[u,v] + \sum _ { k = 0}^m\langle A^{(k)} 
     \cdot D^k u, v\rangle  _\Omega
\end{equation}
where the pairings are well defined thanks to \eqref{e:trilinearSobolev}.
The form is defined for $u$ and $v$ in $W^ { m,2}( \Omega)$,
but in the definitions of weak solutions below, we will 
restrict the form to subspaces of $W^ { m,2} ( \Omega) $. 

Let $f$ be in $ \tilde W^{ -m,2}( \Omega)$. We say that $u$ is a weak solution of 
$Lu=f$ if $u \in W^ { m,2} ( \Omega) $ and we have
$$
B_L[u,v] = \langle f, v\rangle_\Omega ,\qquad v \in W^ { m,2 }_0 ( \Omega) . 
$$
We will also consider local weak solutions where $u$ is 
only required to be in the space 
$W^ { m,2} _{loc} ( \Omega) $ and the test
functions $v$ are required to be in $W^{m,2}_c( \Omega) $, the space of functions in 
$W^ { m,2} ( \Omega) $ which are compactly supported in $ \Omega$.  
We say that $u$ is a solution of the transposed equation, 
$L^t u=f$, if  $u \in W^ {m,2 } ( \Omega) $ and we have 
$$
B_L[v,u]= B_{L^t}[u,v] = \langle f, v\rangle _\Omega, \qquad v \in W_0^ { m,2} ( \Omega) . 
$$

Our goal is to construct complex geometrical optics (CGO) solutions of the 
equation $ Lu = 0$ in a neighborhood of the closure of $\Omega$. 
These will be solutions of the form  
\begin{equation}
    \label{e:cgo}
 u (x) = \exp( i x\cdot \zeta /h) ( a(x) + \psi(x)) .
 \end{equation}
The parameter $h>0$ will be small and the vector $ \zeta $  
is chosen so that the exponential factor is a solution of 
$ \Delta \exp(i \zeta \cdot x/h) =0$. We also normalize the length of $\zeta $ so that 
it lies in the set $ { \cal V}$ defined by
$$ 
{\mathcal V } = \{ \zeta \in \complexes ^d : \re \zeta \cdot \im \zeta = 0, \ |\re \zeta | 
= | \im \zeta | = 1 \} \subset \{ \zeta : \zeta \cdot \zeta =0\}. 
$$
The following subset of $\mathcal{V}$ will play an important role. 
Fix $\xi \in \reals^d$ and denote
$$ 
{\mathcal V }_\xi = \{ \zeta \in \mathcal{V} : \zeta \cdot \xi=0\}. 
$$
We will assume that for some $r$ in $\{0, \dots,m-1\}$,  the amplitude $a$ 
satisfies the  transport equation of order $r+1$
\begin{equation}
\label{e:trport}
(\zeta \cdot D) ^{r+1} a =0 
\end{equation}
To construct our CGO solutions, we will study the operator obtained 
by conjugating the operator $L$ with an exponential factor. 
This leads us to consider the operator 
$$ P(hD) = P_\zeta(hD) = \exp( -ix\cdot \zeta/h) (-h^2 \Delta)
\exp(ix \cdot \zeta /h) 
= -h^2\Delta + 2 h\zeta \cdot D  
$$
where the operator $P_\zeta(hD) $ has the semi-classical symbol $p_\zeta(\xi) = 
|\xi|^2 +2\zeta \cdot \xi$. Note that we are using a slightly 
different exponential factor than in the 
earlier work \cite{MR4455267} of two of the authors. 

To construct CGO solutions as in \eqref{e:cgo}, we will need to consider the 
operator \eqref{e:OpDef} conjugated with the exponential factor
\begin{equation}
\label{Lzetadef}
\begin{split}
L _ \zeta & =\exp(-i x \cdot \zeta / h) h^{2m} L \exp(i x \cdot \zeta / h) \\
& = P_\zeta(hD)^m + h^{ 2m}\sum _ { k=0}^m \sum _ { j =0}^ k 
h^{-j} \binom k j A^{(k)} \cdot( \zeta ^j \otimes D^ { k-j} )
\end{split}
\end{equation} 
We define weak solutions of this equation in a straightforward way by relating
a solution to $L_ \zeta$ to a solution for $L$. Thus if 
$ \psi \in W^{m,2}_{loc}(\Omega)$ and $f \in \tilde  W^ { -m,2}( \Omega)$, we 
say that $\psi $ is a 
solution of $L_ \zeta \psi =f$ if we have that 
$$
h^ {2m}B_L[\exp(ix\cdot \zeta /h ] \psi, \phi)
=\langle \exp(ix\cdot \zeta /h ) f, \phi \rangle,
\quad \phi \in W^ {m,2}_c ( \Omega). 
$$

For  $\lambda \in \reals$, $h$ with  $0< h \leq 1$,  and 
$ \zeta \in {\cal V}$, we 
define spaces $X_{h\zeta}^ \lambda= X^ \lambda$ by $u \in X_{h\zeta}^\lambda$ if the Fourier 
transform of $u$ is a locally 
integrable function which satisfies 
$$ 
\| u \|^2 _ { X_{h\zeta}^ \lambda} = 
\int _ { \reals ^d} ( h + |p_ \zeta ( h\xi)| ) ^ { 2 \lambda } 
|\hat u ( \xi ) |^2 \,d \xi < \infty
$$
Note that we will often drop the subscripts $h$ or $ \zeta$ when 
the intended values may be determined from the context. 
We recall some simple estimates for the spaces $ X_{h\zeta}^\lambda$ 
from  earlier work of Brown and Gauthier \cite[Proposition 3.18]{MR4455267}. In 
the estimates below, we have $0<h\leq 1$ and $ \zeta \in {\cal V}$. 
\begin{align} 
\label{e:xembed}
&\| D^ \alpha u \| _ { X_{h\zeta}^ { \lambda _1}} \lesssim h ^ { - |\alpha | + 
\lambda _1 -\lambda _2} \| u \|_ { X_{h\zeta}^ { \lambda _2}},
&   & |\alpha | \leq 2 ( \lambda _2-\lambda _1)\\
\label{e:Xandderiv} 
 & \| u \| _ {W^ { s,2}( \reals ^d ) } \lesssim h^ { -s-\lambda} 
 \|u \|_{ X_{h\zeta}^ \lambda}, & & 0 \leq s \leq 2 \lambda, \\
\label{e:XSob}
& \| u \|_ {X_{h\zeta}^ { -\lambda }} 
\lesssim h ^ { -s-\lambda} \| u \|_ {W^ { -s, 2 }( \reals ^d) }, & 
& 0 \leq s \leq 2\lambda. 
\end{align}

The next proposition is a modification of  Proposition 3.22 of 
\cite{MR4455267} which is  in turn based on  \cite[Theorem 2.1]{MR3024091}. 
\begin{proposition} Let $ \zeta _1$ and $ \zeta_2 $ lie in $ { \cal V} $. 
    Suppose that $f$ is measurable and bounded on $ \reals ^d$, then we have 
    \begin{equation}
    \label{e:trilinear0}
    |\langle f D^ \beta u, v \rangle| \lesssim \|f\|_{ L^ { \infty} ( \reals ^d)}
    h^{-2\lambda -|\beta|} \| u\|_{ X^ { \lambda}_{h \zeta _1}} 
    \|v \|_{ X^ { \lambda}_{h \zeta _2}}, \qquad |\beta | \leq 2 \lambda.
\end{equation}
If, in addition, $f$ is uniformly continuous on $ \reals ^d$, then as $ h \to 0 ^ +$ 
we have
\begin{equation}
  \label{e:trilinear}
\begin{split} 
    |\langle D^ \alpha f D^ \beta u,v \rangle | 
\leq o(1) h^ { -2\lambda -|\alpha |-|\beta| } & \| u \| _ { X_{h\zeta_1}^{ \lambda}} 
\| v \|_{X_{h\zeta _2}^ \lambda}, \\
& 0 \leq |\alpha| + | \beta | \leq 2 \lambda , \ |\alpha | \geq 1 . 
\end{split}
\end{equation}
\end{proposition}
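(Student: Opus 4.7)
The plan is to handle the two estimates by complementary techniques: part (1) should fall out from bare Cauchy--Schwarz combined with the embedding \eqref{e:Xandderiv}, while part (2) requires a Haberman--Tataru style mollification argument to exploit the uniform continuity.

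For part (1), I would argue directly from H\"older,
$$|\langle f D^ \beta u, v \rangle| \le \|f\|_{L^ \infty( \reals ^d)} \|D^ \beta u\|_{L^2(\reals^d)} \|v\|_{L^2(\reals^d)},$$
and then invoke \eqref{e:Xandderiv} twice: with $s=|\beta|$ (allowed since $|\beta| \le 2\lambda$) to get $\|D^ \beta u\|_{L^2} \lesssim h^{-|\beta|-\lambda}\|u\|_{X^ \lambda_{h \zeta_1}}$, and with $s=0$ to get $\|v\|_{L^2} \lesssim h^{-\lambda}\|v\|_{X^ \lambda_{h \zeta_2}}$. Multiplying gives exactly the claimed $h^{-2\lambda-|\beta|}$.

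For part (2), I would decompose $f = f_ \epsilon + g_ \epsilon$ with $f_ \epsilon = f * \phi_ \epsilon$ a standard mollification at scale $\epsilon$. Uniform continuity gives $\|g_\epsilon\|_{L^\infty} \le \omega(\epsilon) = o(1)$ as $\epsilon \to 0^+$, while standard mollifier estimates give $\|D^\alpha f_\epsilon\|_{L^\infty} \lesssim \epsilon^{-|\alpha|}\|f\|_{L^\infty}$. The smooth piece feeds into part (1), yielding $\lesssim \epsilon^{-|\alpha|} h^{-2\lambda - |\beta|}\|u\|_{X^\lambda_{h\zeta_1}}\|v\|_{X^\lambda_{h\zeta_2}}$. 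For the rough piece, since $|\alpha|\ge 1$ I would integrate by parts $|\alpha|$ times to move every derivative off $g_\epsilon$:
$$\langle D^\alpha g_\epsilon \cdot D^\beta u, v\rangle = (-1)^{|\alpha|}\langle g_\epsilon, D^\alpha(D^\beta u \cdot v)\rangle,$$
then expand by Leibniz and bound each resulting term $\langle g_\epsilon, D^{\alpha_1+\beta} u \cdot D^{\alpha_2} v\rangle$ by Cauchy--Schwarz plus \eqref{e:Xandderiv} with $s=|\alpha_1|+|\beta|$ and $s=|\alpha_2|$ respectively. Both indices lie in $[0,2\lambda]$ thanks to $|\alpha|+|\beta| \le 2\lambda$, so the total is $\lesssim \omega(\epsilon) h^{-2\lambda - |\alpha| - |\beta|}\|u\|_{X^\lambda_{h\zeta_1}}\|v\|_{X^\lambda_{h\zeta_2}}$.

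Combining the two contributions produces the factor
$$h^{-2\lambda-|\alpha|-|\beta|}\bigl((h/\epsilon)^{|\alpha|} + \omega(\epsilon)\bigr)\|u\|_{X^\lambda_{h\zeta_1}}\|v\|_{X^\lambda_{h\zeta_2}},$$
and choosing $\epsilon = \epsilon(h)$ with $\epsilon \to 0$ while $h/\epsilon \to 0$ (e.g.\ $\epsilon = h^{1/2}$) forces the bracket to $o(1)$, completing (2). I expect the main obstacle to be cosmetic rather than conceptual: justifying the integration by parts, since $g_\epsilon \in L^\infty$ only, requires $D^\alpha(D^\beta u \cdot v) \in L^1$, which is handled by the Leibniz expansion above together with the hypothesis $|\alpha|+|\beta| \le 2\lambda$ ensuring every factor sits in $L^2$. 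A density argument (approximating $u,v$ by Schwartz functions in the $X^\lambda$ norm) legitimizes the manipulation.
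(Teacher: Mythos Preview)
Your proof is correct and follows essentially the same mollification-plus-integration-by-parts approach as the paper. The only cosmetic difference is that the paper exploits uniform continuity once more to get the sharper bound $\|D^\alpha f_\epsilon\|_{L^\infty}=o(\epsilon^{-|\alpha|})$ (via $\int D^\alpha\phi_\epsilon=0$), which lets them simply take $\epsilon=h$; your cruder $O(\epsilon^{-|\alpha|})$ bound forces the extra step of choosing $\epsilon=\epsilon(h)$ with $h/\epsilon\to 0$, but the outcome is identical.
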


\begin{proof}
 We begin with the proof of the first estimate. From the Cauchy-Schwarz inequality, we have 
$$
|\langle f D^ \beta u, v \rangle |\leq \|f\|_ {L^\infty (\reals ^d)} 
\| D^ \beta u \| _ { L^2} \| v\|_{ L^2} .
$$
Our estimate \eqref{e:trilinear0} now follows by applying \eqref{e:xembed}. 

To establish the second estimate, we use a standard approximation of the identity   
$ f_ \epsilon = \phi_ \epsilon * f$, let  $ f^ \epsilon = f- f_ \epsilon $ and 
write
$$
\langle D^ \alpha f D^\beta u, v \rangle = \langle D^ \alpha f_\epsilon D^\beta u, v \rangle
+ (-1)^{|\alpha|}  \int f^ \epsilon  D^{\alpha} ( v  D^\beta u) \, dx 
$$
where we have rewritten the second term which contains the derivative of a function which may not be smooth. 
Using our estimate \eqref{e:trilinear0} in the first term and  the product rule and \eqref{e:xembed} 
in the second term, 
we have
\begin{equation*}
\begin{split}
\langle D^ \alpha f D^\beta u, v \rangle | \lesssim 
&  h^ { -2\lambda -|\beta|}\| D^ \alpha f\|_ {L^\infty(\reals ^d)} 
\|u\|_{X^ { \lambda} _{h\zeta_1}} \|v\|_{X^ { \lambda} _{h\zeta_2}} \\
& \qquad+ h^ { -2\lambda - |\alpha| - |\beta| }\|f^ \epsilon\|_ { L^ \infty( \reals ^d)}
\|u\|_{X^ { \lambda} _{h\zeta_1}} \|v\|_{X^ { \lambda} _{h\zeta_2}} . 
\end{split}
\end{equation*}
Thanks to 
our assumption that $f$ is uniformly continuous, we have 
$\|f^\epsilon \| _{L^ \infty(\reals^d)} = o(1)$ as 
$ \epsilon \to 0^+$ and the boundedness of $f$ and 
properties of $ \phi _ \epsilon$ 
give $ \|D^ \alpha f_ \epsilon \| _ {L^ \infty(\reals^d)}
= O ( \epsilon ^ { - |\alpha |})$. Using these observations gives
$$ |\langle D^ \alpha f D^ \beta u v \rangle | \lesssim ( \epsilon ^ { - |\alpha| } 
h^ { -2\lambda - |\beta|}
+ o(1) h^ { -2\lambda - |\alpha |- |\beta| } )\|u\| _ { X^ {\lambda}_{h \zeta_1}} 
\|v\|_{X^ { \lambda} _{h\zeta_2}} .
$$
If we let $\epsilon = \sqrt h$ and recall that we have $0< h \leq1$, we obtain \eqref{e:trilinear} when $|\alpha | \geq 1$. 
\end{proof}



To invert the operator $ L_\zeta$, we will assume that the components  
$A^{(k)}_\alpha$ of $A^{(k)}$ can be written in the form 
\begin{equation}
    \label{e:CoeffCond} 
    A^{(k)}_ \alpha = \sum _{ |\beta |\leq m - k } D^\beta a_{\alpha\beta},
\end{equation}
where the functions $a_{ \alpha\beta}$ are continuous in $\reals ^d$ and  
compactly supported with their support in $ \bar \Omega$. Thus  we also have that 
$A^{(k)} \in W^ { k-m, t'}( \reals ^ d)$ as in \eqref{e:coeffcond0}.  
When we give
the proof of our main theorem, Theorem \ref{t:Main}, we will see that we may
use a version of Sobolev embedding to obtain the assumption 
\eqref{e:CoeffCond} from
the hypotheses of the Theorem. 
We specialize to $ \lambda = m /2$ and assume the entries  of 
$A^{(k)}$ satisfy \eqref{e:CoeffCond}.   In this case, we have that 
$$
\| A^{(k)} \cdot D^k u \|_{X^ { -m/2}_{ h \zeta _1}} \leq o(h ^ { -2m }) \|u \|_ {X^ { m/2}_{h\zeta _2}} , \quad k= 0, \dots, m-1
$$
and more generally 
\begin{equation} 
\label{e:CfMap}
\begin{split}
\| A^{(k)} \cdot D^{k-j} u \|_{X^ { -m/2}_{h\zeta _1}} &\leq o(h ^ { -2m +j})  
\|u \|_ {X^ { m/2}_{h \zeta _2} }, \\
&k=0, \dots, m-1, \ j = 0, \dots, k.
\end{split}
\end{equation} 
In the case where $k=m$, we no longer have a constant which tends to zero with $h$ but instead have 
\begin{equation}
    \| A^{(m)} \cdot D^ { m-j}u\|_{ X^ { -m/2} _{h \zeta_1}} 
    \lesssim h^{-2m+j } \|A^{(m)}\|_{L^ \infty}\| u \| _ { X^ { m/2} _{ h\zeta_2 }}
\end{equation}
We recall that in Corollary 3.13 of earlier work \cite{MR4455267}, we found  a map 
$I_\phi: X^ \lambda \rightarrow X^ { \lambda +1}$ so that $ p(hD) I_\phi(f) = f$ 
in a neighborhood of $ \bar \Omega$. This map will satisfy the estimate
\begin{equation}
\label{e:Solvit}
    \|I_\phi (f) \| _ { X^ { \lambda +1}}\lesssim \|f\|_ { X^ \lambda}
\end{equation}
with the implied constant depending on $ \lambda$ but independent of $h$ and $ \zeta$. 
\rv{
We give the definition of the operator here, but refer the reader to our earlier
work for the proof of \eqref{e:Solvit}. 
To define the operator, we need a  fixed, smooth, compactly supported function $ \phi$ which 
is the constant 1 in  a neighborhood of $ \bar \Omega$. Using $\phi$,  the operator $I_\phi$ is 
defined using the Fourier transform by
$$
I_\phi(f) = \phi \cdot ( p_\zeta(h\cdot)^{-1}(\phi f)\hat{\ })\check{} .
$$
By using the cutoff function to restrict to the behavior on a compact set, 
we avoid having to consider the behavior at infinity which is usually treated
using weighted spaces. 
}

To establish uniqueness for our inverse problems we will
need to construct solutions for an adjoint operator $L^t$ 
as well
as for an operator $L$ and it will be an important point that the coefficients of the operator
$L^t$ satisfy the same conditions as those of $L$. The next result
makes this precise. 
\begin{proposition}
\label{p:AdjCoeff}
    If $L$ is an operator as in \eqref{e:OpDef} with 
    coefficients 
    $A^{(k)} \in \tilde W^ { k-s,p}(\Omega)$,
    $k=0, \dots, m$ and for some $s$ in $\reals$, then the 
    operator $ L^t$ also has coefficients 
    $\tilde A^{(k)} \in \tilde 
    W^ { k-s, p}(\Omega)$. 

    If the coefficients of $L$ satisfy the condition 
    \eqref{e:CoeffCond} for $k=0,\dots,m$, 
    then the coefficients of $L^t$ also satisfy this condition. 
\end{proposition}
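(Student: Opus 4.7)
The plan is to compute $L^t$ explicitly from the defining identity $B_{L^t}[u,v] = B_L[v,u]$, read off the new coefficients via integration by parts and Leibniz's rule, and then invoke standard mapping properties of differentiation on the $\tilde W^{s,p}$ scale. Since $(-\Delta)^m$ is formally self-adjoint, the form $B_0$ produces the same principal part for $L^t$ as for $L$, so all the content lies in the lower-order sum.

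For each $k \in \{0,\dots,m\}$ and each multiindex $\alpha$ with $|\alpha|=k$, integration by parts (justified at the distributional level via \eqref{e:trilinearSobolev} and density of smooth functions) gives
$$\langle A^{(k)}_\alpha D^\alpha v, u\rangle_\Omega = (-1)^k \langle v, D^\alpha(A^{(k)}_\alpha u)\rangle_\Omega.$$
In the paper's multiindex notation, the Leibniz rule reads $D^\alpha(fu) = \sum_{S \subset \{1,\dots,k\}} (D^{\alpha_S} f)(D^{\alpha_{S^c}} u)$. Grouping terms according to $j = |S^c|$ and summing over $|\alpha|=k$, one obtains a symmetric tensor $\tilde B^{(k,j)}$ of order $j$, each component of which is a linear combination of derivatives of order $k-j$ of components of $A^{(k)}$, such that
$$(-1)^k \sum_{|\alpha|=k} D^\alpha(A^{(k)}_\alpha u) = \sum_{j=0}^k \tilde B^{(k,j)} \cdot D^j u.$$
(Symmetrization of the raw tensor produced by Leibniz is harmless because it is contracted against the symmetric tensor $D^j u$.) The coefficients of $L^t$ are then $\tilde A^{(j)} = \sum_{k=j}^m \tilde B^{(k,j)}$.

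For the first claim, differentiation $D^\beta$ is a continuous map $\tilde W^{k-s,p}(\Omega) \to \tilde W^{k-s-|\beta|,p}(\Omega)$ that preserves support in $\bar\Omega$, so each entry of $\tilde B^{(k,j)}$ lies in $\tilde W^{k-s-(k-j),p}(\Omega) = \tilde W^{j-s,p}(\Omega)$, whence $\tilde A^{(j)} \in \tilde W^{j-s,p}(\Omega)$. For the second claim, inserting the representation $A^{(k)}_\alpha = \sum_{|\beta| \leq m-k} D^\beta a_{\alpha\beta}$ into a $(k-j)$-th order derivative produces a sum of terms $D^{\delta\beta} a_{\alpha\beta}$ with $|\delta\beta| \leq (k-j) + (m-k) = m-j$ and with the $a_{\alpha\beta}$ still continuous and supported in $\bar\Omega$. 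Relabeling and symmetrizing (which preserves continuity and support) yields a representation of $\tilde A^{(j)}_\gamma$ of the form $\sum_{|\delta| \leq m-j} D^\delta \tilde a_{\gamma\delta}$ as required by \eqref{e:CoeffCond}.

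The main obstacle is purely combinatorial bookkeeping: keeping track of which sub-sequences of $\alpha$ the derivative falls on, and verifying that after symmetrization one truly obtains a symmetric tensor coefficient of the correct order. There is no analytic obstruction beyond the elementary mapping properties of differentiation on $\tilde W^{s,p}(\Omega)$ and preservation of support.
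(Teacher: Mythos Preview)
Your proof is correct and follows essentially the same approach as the paper: both compute $L^t$ via integration by parts and the Leibniz rule, read off $\tilde A^{(j)}$ as a sum over $k\ge j$ of $(k-j)$-th order derivatives of entries of $A^{(k)}$, and then invoke that $D^\beta$ maps $\tilde W^{r,p}(\Omega)$ to $\tilde W^{r-|\beta|,p}(\Omega)$ while preserving support. The paper records the explicit tensor formula $\tilde A^{(k)} = \sum_{j=k}^m (-1)^j \binom{j}{k} D^{j-k}\cdot A^{(j)}$, whereas you leave the combinatorial constants implicit, but the analytic content is identical.
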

\begin{proof}
From the definition of $L^t$, we can show that  
$$ 
L^t v = (-\Delta )^m + \sum _{ k=0}^m (-1)^k D^k \cdot ( A^{(k)}v).
$$
We use the product rule for tensors 
$$
D^k \cdot ( A^{(k)} v) 
= \sum _ { j=0}^k \binom k j (D^{k-j}\cdot A^{(k)})\cdot D^j v.
$$
Here $D^{k-j}
\cdot A^{(k)}$ is a distribution with values 
in   $S^j$ defined as follows.  If $\alpha$ is a multiindex with 
$|\alpha|=j$,  $(D^{k-j} \cdot A^{(k)})_{\alpha}
=\sum_{|\beta|=k-j} D^{\beta}A_{\alpha \beta}$
where derivatives are understood distributionally.
Collecting terms, it  follows that
the operator $L^t$ can be written as 
\begin{equation*}
L^t v = ( -\Delta )^m v 
+ \sum _{ k =0 } ^ m \tilde{A}^{( k)} \cdot  D^ k v .
\end{equation*}
with 
\[ 
\tilde{A}^{(k)}= \sum_{j=k}^m  (-1)^j \binom{j}{k} D^{j-k} \cdot A^{(j)} . 
\]

Since $D^\alpha$ maps $W^{s,p}(\reals^d)$ to
$W^{s-|\alpha|, p }(\reals^d)$, it holds that
$A^{(j)} \in \tilde{W}^{j-s,p}(\Omega)$ implies that 
$\tilde{A}^{(k)} \in  \tilde{W}^{k-s,p}(\Omega) $ as claimed. 

The calculations above also show that if  the coefficients of $L$ satisfy the condition \eqref{e:CoeffCond}, then the coefficients of $L^t$ also satisfy this condition.  
\end{proof}

\section{Constructing CGO solutions}
\label{s:CGO}
In this section we show how to solve the equation 
\begin{equation} 
\label{EqnZDef} L_ \zeta u =f, \qquad \mbox{in } \Omega
\end{equation} 
in the space $X_{h\zeta}^{m/2}$ and then obtain the existence of CGO 
solutions as in \eqref{e:cgo} with good estimates for the 
remainder in the space $X^{ m/2}_{h\zeta}$.   

\begin{proposition}
\label{prop:aprioriestimate}
Suppose that the coefficients $A^{(k)}$ of $L$  in \eqref{e:OpDef}  
satisfy \eqref{e:CoeffCond} for $k=0,\dots, m-1$ and 
that $ \|A^{(m)}\|_ \infty$ 
is sufficiently small. Then for $h$ sufficiently small, we may 
find a solution of \eqref{EqnZDef} which satisfies the estimate
$$
\|\psi \|_ { X^ { m/2}} \lesssim \|f \|_{ X^ { -m/2}}
$$
\end{proposition}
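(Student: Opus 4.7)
The plan is to write $L_\zeta = P_\zeta(hD)^m + R_\zeta$ and invert the full operator by treating $R_\zeta$ as a perturbation of the principal part. Iterating the solution operator $I_\phi$ from \eqref{e:Solvit} produces
$G := I_\phi^{\,m}: X_{h\zeta}^{-m/2}\to X_{h\zeta}^{m/2}$, which is a right inverse of $P_\zeta(hD)^m$ in a neighborhood of $\bar\Omega$ and satisfies $\|Gf\|_{X^{m/2}}\lesssim\|f\|_{X^{-m/2}}$ with an implicit constant independent of $h\in(0,1]$ and $\zeta\in\mathcal V$.

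From \eqref{Lzetadef} the perturbation is
$$R_\zeta u \;=\; h^{2m}\sum_{k=0}^{m}\sum_{j=0}^{k}h^{-j}\binom{k}{j}A^{(k)}\cdot\bigl(\zeta^{\,j}\otimes D^{k-j}u\bigr).$$
Since $|\zeta|=\sqrt{2}$ on $\mathcal V$, the tensor factor $\zeta^{\,j}$ contributes only a bounded constant, and the estimate \eqref{e:CfMap} yields
$$\bigl\|h^{2m-j}\,A^{(k)}\cdot(\zeta^{\,j}\otimes D^{k-j}u)\bigr\|_{X^{-m/2}} \;=\; o(1)\,\|u\|_{X^{m/2}}$$
as $h\to 0^+$ for $k=0,\dots,m-1$ and $0\le j\le k$, while the $k=m$ terms are bounded by $C\|A^{(m)}\|_{L^\infty}\|u\|_{X^{m/2}}$ with $C$ independent of $h$. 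Seeking a solution of the form $\psi = Gu$ with $u\in X^{-m/2}$ reduces \eqref{EqnZDef} to $(I+R_\zeta G)u = f$ on $X^{-m/2}$, and composing the previous bounds with the estimate on $G$ gives
$$\|R_\zeta G\|_{X^{-m/2}\to X^{-m/2}} \;\le\; o(1) + C'\|A^{(m)}\|_{L^\infty}.$$
Choosing $\|A^{(m)}\|_{L^\infty} < 1/(4C')$ and then $h$ small enough that the $o(1)$ piece is below $1/4$ makes $I+R_\zeta G$ invertible by Neumann series with operator norm of the inverse at most $2$; setting $\psi := G(I+R_\zeta G)^{-1}f$ yields the desired solution with $\|\psi\|_{X^{m/2}}\lesssim\|f\|_{X^{-m/2}}$.

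The main obstacle lies in the $k=m$ block of $R_\zeta$: the factor $h^{2m-j}$ there exactly cancels the $h^{-2m+j}$ blow-up coming from the corresponding bound, so no $h$-smallness is available and only the hypothesis $\|A^{(m)}\|_\infty$ small supplies the required contraction; this explains why the smallness assumption on the top-order coefficient cannot be dropped by the present argument. For $k<m$ the same cancellation preserves the genuine $o(1)$ gain supplied by \eqref{e:CfMap} (which in turn rests on the representation \eqref{e:CoeffCond} of the lower-order coefficients as derivatives of uniformly continuous, compactly supported functions), so the lower-order coefficients cause no difficulty in closing the Neumann series.
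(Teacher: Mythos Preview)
Your argument is correct and is essentially the paper's own proof: both construct the right parametrix $I_\phi^{\,m}$ for $P_\zeta(hD)^m$, use \eqref{e:CfMap} for $k\le m-1$ and the $L^\infty$-bound for $k=m$ to make the perturbation small, and close by a Neumann series/contraction mapping. The only cosmetic difference is that the paper sets up the fixed-point equation $(I+I_\phi^{\,m}R_\zeta)\psi=I_\phi^{\,m}f$ in $X^{m/2}$, whereas you solve $(I+R_\zeta I_\phi^{\,m})u=f$ in $X^{-m/2}$ and then put $\psi=I_\phi^{\,m}u$; these are equivalent via the identity $(I+GR)^{-1}G=G(I+RG)^{-1}$, and the paper in addition cites an earlier lemma to check that the fixed point is indeed a weak solution in $\Omega$.
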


\begin{proof}
We consider the integral equation 
$$
\psi + h^{ 2m}I_\phi^m  \Big( \sum _ { k=0}^m \sum _ { j =0}^ k 
h^{-j} \binom k j A^{(k)} \cdot  (\zeta ^j \otimes D^ { k-j} \psi )\Big)= I_\phi ^m f
$$
According to the estimate \eqref{e:Solvit}, 
the right-hand side will be in $X^ { m/2}$ when $f$ lies in $X^ { -m/2}$. Using 
\eqref{e:Solvit}  and  \eqref{e:CfMap} 
(see also Proposition 3.22 in our previous work \cite{MR4455267}), 
the second term on the 
left gives a contraction on $ X^ { m /2}$ provided $h$ is small and the coefficient 
$A^{(m)}$ is small in $L^ \infty$. Thus the 
 existence of a solution to this equation follows from the contraction 
 mapping theorem.  

According to our earlier work \cite[Lemma 3.14]{MR4455267}, the solution 
of the integral 
equation is also a weak solution of \eqref{EqnZDef}.  
\end{proof}

We fix $ \zeta\in {\cal V}$ and let $ a$ be a smooth function satisfying 
one of the transport equations in 
\eqref{e:trport}. 
If we have a CGO solution as in \eqref{e:cgo}, then applying the 
operator $(-h^2\Delta)^m$ 
and multiplying by $\exp(-ix\cdot\zeta/h)$, we find that $ \psi $ is a 
solution of  the equation
\begin{multline}
    \label{e:CGOeqn}
P(hD)^m \psi + h^{ 2m}\sum_{k=0}^m A^ {(k)}\cdot (D+ \zeta/h)^k \psi \\
= - p(hD)^m a - h^ { 2m} \sum _{k=0}^m A^ {(k)}\cdot (D+ \zeta/h)^k a.
\end{multline}

When we construct CGO solutions, it will be important to 
have estimates for the righthand side of
equation \eqref{e:CGOeqn}. These estimates for the righthand side will imply estimates for
$\psi$. 

\begin{proposition} 
\label{p:AmpProp} Let $ \xi \in \reals ^d$ and $ \zeta \in {\cal V}_ \xi$.  
If $b$ is a polynomial of degree $r$, then  $a(x) = b(x) e^ { -i x\cdot \xi }$ 
satisfies the transport equation of order $r+1$ in \eqref{e:trport}.  

Suppose $a$ is smooth and satisfies the transport equation of order $r+1$
$( \zeta \cdot D) ^{r+1}  a =0$ for some $r$ in $\{0, \dots, m-1\}$, 
that $ \phi $ is a smooth cutoff function, $ \omega \in {\cal V}$ 
and  $\lambda \geq 0$, then we have
$$
    \| \phi P_ \zeta (hD)^m a\| _{X^ { -\lambda}_{h \omega}}  
    = O( h ^ { 2m - \lambda -r }).
$$
\end{proposition}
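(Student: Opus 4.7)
The first assertion is a direct computation. I would use that $D_k e^{-ix\cdot \xi} = -\xi_k e^{-ix\cdot \xi}$ together with the product rule to write
\[
(\zeta \cdot D)\bigl(b(x)\, e^{-ix\cdot \xi}\bigr) = \bigl( (\zeta \cdot D) b - (\zeta \cdot \xi)\, b \bigr)\, e^{-ix \cdot \xi}.
\]
The hypothesis $\zeta \in \mathcal{V}_\xi$ forces $\zeta \cdot \xi = 0$, so $\zeta \cdot D$ passes through the exponential unchanged. Iterating gives $(\zeta \cdot D)^k(b\, e^{-ix\cdot \xi}) = \bigl((\zeta \cdot D)^k b\bigr)\, e^{-ix\cdot \xi}$, and since $b$ has degree $r$ the differential operator $(\zeta \cdot D)^{r+1}$ annihilates it.

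The key observation for the main estimate is that $-h^2\Delta$ and $2h(\zeta \cdot D)$ have constant coefficients and therefore commute. The binomial theorem then gives
\[
P_\zeta(hD)^m = \sum_{j=0}^m \binom{m}{j} (-1)^{m-j}\, 2^j\, h^{2m-j}\, \Delta^{m-j}(\zeta \cdot D)^j.
\]
Applying this to $a$ and using $(\zeta \cdot D)^{r+1} a = 0$ collapses the sum to $0 \leq j \leq r$, leaving
\[
P_\zeta(hD)^m a = \sum_{j=0}^r c_{m,j}\, h^{2m-j}\, \Delta^{m-j}(\zeta\cdot D)^j a
\]
for explicit constants $c_{m,j}$. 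Since $0<h\leq 1$, the smallest power of $h$ that appears is $h^{2m-r}$, which controls the whole sum.

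Next, because $a$ is smooth and $\phi$ is a fixed smooth compactly supported cutoff, each function $\phi\, \Delta^{m-j}(\zeta\cdot D)^j a$ is smooth and compactly supported, with an $L^2$ norm that does not depend on $h$. Consequently
\[
\|\phi\, P_\zeta(hD)^m a\|_{L^2(\reals^d)} = O(h^{2m-r}).
\]
To convert this into a bound in $X^{-\lambda}_{h\omega}$, I would invoke the embedding \eqref{e:XSob} with $s=0$, which is valid for any $\lambda \geq 0$ and yields $\|u\|_{X^{-\lambda}_{h\omega}} \lesssim h^{-\lambda}\|u\|_{L^2}$. Combining with the previous bound gives
\[
\|\phi\, P_\zeta(hD)^m a\|_{X^{-\lambda}_{h\omega}} \lesssim h^{-\lambda} \cdot h^{2m - r} = h^{2m - \lambda - r},
\]
which is the claim. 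There is no substantive obstacle here: once commutativity and the transport equation are used to truncate the binomial expansion, the rest is a routine tally of powers of $h$.
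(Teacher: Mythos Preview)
Your proof is correct and follows essentially the same route as the paper: expand $P_\zeta(hD)^m$ by the binomial theorem, use the transport equation to truncate the sum at $j=r$, read off the minimal power $h^{2m-r}$, estimate the remaining smooth compactly supported functions in $L^2$, and then pass to $X^{-\lambda}_{h\omega}$ via the elementary embedding that costs $h^{-\lambda}$. The only cosmetic difference is that the paper cites the pointwise inequality $(h+|p_\omega(h\xi)|)^{-\lambda}\le h^{-\lambda}$ (or \eqref{e:xembed}) for the last step, whereas you invoke \eqref{e:XSob} with $s=0$; these are the same estimate.
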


\begin{proof}
The first statement follows since $ \zeta \cdot D e^ { -ix \cdot \xi } 
= -\zeta \cdot \xi e^ {- i x\cdot \xi } $ 
and  if $b$ is a polynomial of degree $r$, then any derivative 
of order $r+1$ will be zero. 

To establish the second statement, 
    we use the binomial theorem to expand
    $$
    P(hD)^m a= \sum_{ k =0 }^{r}\binom m k (-h^2\Delta )^{m-k} (2h\zeta \cdot D)^k a.
    $$
    Since $a$ satisfies the transport equation of order $r+1$ 
    (see \eqref{e:trport}), the terms 
    with $ k=r+1, \dots, m$ vanish and hence the upper limit in the above sum is $r$. 
    Each of the  nonzero  terms contain $h$  raised to a power that 
    is at least $2m -r$.  Since $a$ is smooth, 
    this provides an easy estimate for the $L^2$-norm on compact 
    subsets of $ \reals ^d$. We use that 
    $ (h + |p_\omega(h \xi) |) ^ { -\lambda } \leq h^{-\lambda} $ 
    to obtain the estimate for the 
    $ X^ { -\lambda}$-norm from the estimate for the $L^2$-norm 
    (or see \eqref{e:xembed}). Note that we do not need any relation 
    between the vectors $ \zeta$ in the transport equation and $\omega$ 
    in the $X ^{-\lambda }$-norm. 
\end{proof}

A key step in our proof involves taking averages of the coefficients as we allow $h$ and $ \zeta$ to vary
and obtaining a good estimate for these averages.  
This estimate allows us to choose  values $h$ and   $ \zeta$ for
which the terms involving the coefficients on the righthand side of \eqref{e:CGOeqn} are small. 
The result below is from work of 
Brown and Gauthier \cite[Theorem 4.6]{MR4455267} and is an extension of  a 
result from Haberman and Tataru \cite{MR3024091}. 

The statement of Theorem 4.6 in Brown and Gauthier's work omits 
the condition equivalent to  $s-k -2 \sigma \geq 0$ in the result below, but 
this condition is needed in our earlier work. This is 
the error mentioned in the introduction. 

\begin{proposition}
\label{p:Avg}
Suppose that $\sigma$, 
$k$, $s$, and $\lambda$ satisfy  $0 \leq \sigma < 1$
\begin{equation}
\label{SupCond}
(2\lambda  - 2 \sigma)  \geq s-k -2 \sigma \geq 0
\end{equation}
Fix $ \zeta \in {\cal V}$ and let $ \zeta( \theta) 
= e^ { i \theta } \zeta $, $\theta \in \reals$ 
which will also lie in $\cal V$.  
Let  $f \in W^{k-s,2}(\reals^d)$
$$
\frac 1 h \int _ 0 ^ {2\pi } \int _h ^ { 2h} 
\| f\|^2_{X^ { -\lambda}_{\tau \zeta ( \theta)} }
d\tau \, d\theta \lesssim h ^ { 2(-\lambda -s + k + \sigma)} 
\| f\|^2_ { W^ { k-s, 2} ( \reals ^ d)}
$$
\end{proposition}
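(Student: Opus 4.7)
The plan is to apply Plancherel and Fubini to reduce the double average to a pointwise kernel bound, and then to establish that bound via a Haberman--Tataru-style level-set analysis of $p_{\zeta(\theta)}(\tau\xi)$ as $(\tau,\theta)$ varies. By Plancherel,
$$
\|f\|^2_{X^{-\lambda}_{\tau\zeta(\theta)}} \;=\; \int_{\reals^d} \bigl(\tau + |p_{\zeta(\theta)}(\tau\xi)|\bigr)^{-2\lambda}\, |\hat f(\xi)|^2\, d\xi,
$$
so by Fubini the left-hand side of the proposition equals $\int_{\reals^d} K(\xi)\,|\hat f(\xi)|^2\,d\xi$ with
$$
K(\xi) \;=\; \frac1h \int_0^{2\pi}\!\!\int_h^{2h} \bigl(\tau + |p_{\zeta(\theta)}(\tau\xi)|\bigr)^{-2\lambda}\, d\tau\, d\theta.
$$
The statement therefore reduces to the pointwise kernel bound $K(\xi) \lesssim h^{2(-\lambda - s + k + \sigma)}(1+|\xi|^2)^{k-s}$, since the Fourier characterization of the Sobolev norm identifies $\int_{\reals^d}(1+|\xi|^2)^{k-s}|\hat f(\xi)|^2\,d\xi = \|f\|^2_{W^{k-s,2}(\reals^d)}$.

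To prove the pointwise bound I would expand $p_{\zeta(\theta)}(\tau\xi) = \tau^2|\xi|^2 + 2\tau e^{i\theta}\zeta\cdot\xi$ and decompose $\zeta = \zeta_R + i\zeta_I$ into its orthonormal real and imaginary parts, so that the real and imaginary parts of $p_{\zeta(\theta)}(\tau\xi)$ vary sinusoidally in $\theta$ with amplitudes of order $\tau|\xi|$. A standard level-set computation then bounds the Lebesgue measure of $\{(\tau,\theta)\in[h,2h]\times[0,2\pi) : |p_{\zeta(\theta)}(\tau\xi)|\leq \mu\}$ by a power of $\mu$ that vanishes as $\mu\to 0^+$, and combining this with the layer-cake representation of the integrand yields a gain over the trivial bound $\tau^{-2\lambda}\leq h^{-2\lambda}$. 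Balancing the gain against the weight $(1+|\xi|^2)^{k-s}$ across the various size regimes of $h|\xi|$ then delivers the desired kernel estimate.

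The main technical obstacle is the joint level-set estimate in $(\tau,\theta)$, and it is precisely when the averaging gain is balanced against the Sobolev weight that the condition $s - k - 2\sigma \geq 0$---the one overlooked in \cite[Theorem 4.6]{MR4455267}---is needed to prevent the gain from exceeding what the weight can absorb. The complementary upper bound $2\lambda - 2\sigma \geq s - k - 2\sigma$ ensures that the resulting power of $h$ is consistent with what the kernel can supply, and closes the estimate; the remaining steps are standard Fourier analysis.
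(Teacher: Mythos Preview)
Your reduction via Plancherel and Fubini to a pointwise kernel bound $K(\xi)\lesssim h^{2(-\lambda-s+k+\sigma)}\langle\xi\rangle^{2(k-s)}$ is exactly how the paper proceeds, and the averaging mechanism you invoke is the correct source of the gain. The paper organizes the kernel estimate a bit differently from your direct level-set attack: it splits the exponent as $2\lambda=(2\lambda-2\sigma)+2\sigma$ and factors
\[
(\tau+|p_{\zeta(\theta)}(\tau\xi)|)^{-2\lambda}
=\frac{\langle\xi\rangle^{2s-2k-4\sigma}}{(\tau+|p_{\zeta(\theta)}(\tau\xi)|)^{2\lambda-2\sigma}}
\cdot
\frac{\langle\xi\rangle^{2k-2s+4\sigma}}{(\tau+|p_{\zeta(\theta)}(\tau\xi)|)^{2\sigma}}\,.
\]
The first factor is controlled by a \emph{supremum} in $\xi$ (this is \cite[eq.~(3.17)]{MR4455267}, and is precisely where the chain $0\le s-k-2\sigma\le 2\lambda-2\sigma$ enters), while only the second factor is \emph{averaged} in $(\tau,\theta)$ via the Haberman--Tataru estimate \cite[Proposition~4.5]{MR4455267}, which yields $\tfrac{1}{h}\int_0^{2\pi}\!\int_h^{2h}(\tau+|p|)^{-2\sigma}\,d\tau\,d\theta\lesssim h^{-4\sigma}\langle\xi\rangle^{-4\sigma}$ for $0\le\sigma<1$. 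Your plan would effectively re-derive these two ingredients inside a single layer-cake computation; the paper's splitting has the advantage of isolating exactly which hypothesis is used where and of recycling lemmas already proved in \cite{MR4455267}, while your route is more self-contained but requires carrying the full exponent $2\lambda$ through the level-set analysis.
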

\begin{proof}
We write 
\begin{multline}
\label{e:avgstp1}
\frac 1 h \int _ 0 ^ {2\pi } \int _h ^ { 2h} 
\| f\|^2_{X^ { -\lambda}_{\tau \zeta (\theta)}}
\, d\tau \, d\theta \\
=
\frac 1 h \int _ 0 ^ {2\pi }\!\! \int _h ^ { 2h} \int_{ \reals ^ d} 
| \widehat {f} | ^ 2 
( \tau + |p_{\zeta(\theta)} (\tau \xi) |) ^ { - 2\lambda } \, d\xi \,  d\tau \, d\theta
\\
\leq \frac 1 h \int _ 0 ^ {2\pi } \int _h ^ { 2h} \int_{ \reals ^ d} | 
\widehat {f} | ^ 2 
(\sup _ { \xi } \frac { \langle \xi \rangle ^ {2s-2k-4 \sigma}} 
{( \tau + |p_{\zeta(\theta)} 
(\tau \xi) |) ^ {  2\lambda- 2 \sigma  }} )
\frac { \langle \xi \rangle^ { 2k-2s+ 4 \sigma}}
{ ( \tau + |p_{\zeta( \theta) }( \tau \zeta )| ) ^ { 2 \sigma} }
\, d\xi \,  d\tau \, d\theta
\end{multline}
From  condition \eqref{SupCond} we obtain
$0 \le 2(s-k-2\sigma) \le 4\lambda-4\sigma$.
Thus for  small enough $h$, and $h \le |\tau| \le 2h$,   we can 
apply an estimate of  Brown
and Gauthier 
\cite[equation (3.17)]{MR4455267}
to obtain that 
$$ 
\sup _{ \xi \in \reals ^d } \frac { \langle \xi \rangle ^ {2s-2k-4 \sigma}} 
{( \tau + |p_{\zeta(\theta)}
(\tau \xi) |) ^ {  2\lambda- 2 \sigma  }} 
\lesssim \tau^{-2\lambda-2s+ 2k + 6 \sigma} 
\lesssim h^{-2\lambda-2s+ 2k + 6 \sigma}
$$
Substituting this displayed inequality into \eqref{e:avgstp1} gives 
\begin{multline}
\label{e:RTA}
\frac 1 h \int _ 0 ^ {2\pi } \int _h ^ { 2h}
\| f\|^2_{X^ { -\lambda}_{\tau \zeta ( \theta)} } d\tau d\theta
\lesssim
 h^{-2\lambda-2s+ 2k + 6 \sigma} \\ 
 \times \frac 1 h \int _ { \reals ^d}| \widehat {f} (\xi)|^2
 \langle \xi \rangle^ { 2k-2s+ 4 \sigma}
 \int _ 0 ^ { 2\pi } \int _ h ^ { 2h} \frac 1 {( \tau +| p_{\zeta( \theta)}
 ( \tau \xi ) |) ^ { 2 \sigma} }\, d\tau \,  d\theta \, d\xi . 
\end{multline}
Proposition 4.5 in Brown and Gauthier \cite{MR4455267}
gives that for $ 0 \leq \sigma < 1$
$$
\frac 1 h \int _ 0 ^ { 2\pi } \! \int _ h ^ { 2h} 
\frac 1 {( \tau + |p_{\zeta( \theta)} ( \tau \xi) | ) ^ { 2 \sigma}} \, d\tau\,  d\theta
\lesssim \frac 1 { h^ { 4 \sigma} \langle \xi \rangle ^ { 4 \sigma}}
$$
Putting this estimate into \eqref{e:RTA} gives the conclusion of the Proposition. 
\end{proof}

\begin{proposition} 
\label{p:Avgo1}
Let $ \zeta \in { \cal V}$,   $ 0 \leq \sigma < 1$ and 
    suppose $2\lambda - 2\sigma \geq s-k -2  \sigma \geq 0$ and that  
    $ s-k >0 $ if $ \sigma =0$. 
    Given $ f \in W^{k-s,2} (\reals ^d)$, we may find a sequence $\{(h_n, \zeta _n= e^ { i \theta _n} \zeta) \} $ with 
    $ \lim _{ n \to \infty } h _ n =0$ and $\lim _ { n \to \infty} \zeta _n =  \zeta_0 = e^ { i \theta _0} \zeta$ so that 
    $$
    \| f\|_{ X^ { - \lambda}_{h_n \zeta_n}}= o ( h _n^{ -\lambda + k -s + \sigma}). 
    $$
\end{proposition}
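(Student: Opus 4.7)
The plan is a Haberman--Tataru style extraction from the averaged bound of Proposition \ref{p:Avg}, upgraded via density of smooth functions. Write $\mu := -\lambda + k - s + \sigma$, so the target is a sequence $(h_n, \zeta_n)$ with $\|f\|_{X^{-\lambda}_{h_n \zeta_n}} = o(h_n^\mu)$. The first step is to upgrade Proposition \ref{p:Avg} from an $O(h^{2\mu})$ bound on the average to an arbitrarily small multiple of $h^{2\mu}$. Given $\epsilon > 0$, decompose $f = f_1 + f_2$ with $f_1 \in C_c^\infty(\reals^d)$ and $\|f_2\|_{W^{k-s,2}(\reals^d)} < \epsilon$; Proposition \ref{p:Avg} applied to $f_2$ directly bounds its contribution to the average by $\lesssim \epsilon^2 h^{2\mu}$.

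For the smooth piece $f_1$ I would use the crude pointwise inequality $(\tau + |p_{\zeta(\theta)}(\tau\xi)|)^{-2\lambda} \leq \tau^{-2\lambda}$, valid since $\lambda \geq \sigma \geq 0$ by \eqref{SupCond}, together with Plancherel to obtain
\[
\frac{1}{h} \int_0^{2\pi}\!\int_h^{2h} \|f_1\|^2_{X^{-\lambda}_{\tau \zeta(\theta)}} \, d\tau\, d\theta \lesssim h^{-2\lambda} \|f_1\|^2_{L^2}.
\]
The key observation is that the stated hypotheses force $s - k - \sigma > 0$: when $\sigma = 0$ this is the extra assumption $s - k > 0$, and when $\sigma > 0$ the inequality \eqref{SupCond} gives $s - k \geq 2\sigma > \sigma$. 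Hence $h^{-2\lambda} = h^{2\mu} \cdot h^{2(s-k-\sigma)} = o(h^{2\mu})$ as $h \to 0^+$, so that once $h$ is smaller than a threshold $h_0(\epsilon) > 0$ the averaged contribution of $f_1$ is also $\leq \epsilon^2 h^{2\mu}$, and therefore so is the average of $\|f\|^2_{X^{-\lambda}_{\tau\zeta(\theta)}}$.

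Finally I would produce the sequence by a diagonal extraction. Setting $\epsilon = 1/n$ and choosing $h_n < h_0(1/n)$ with $h_n \to 0^+$, the averaged bound $\lesssim h_n^{2\mu}/n^2$ and the mean value principle on $[h_n, 2h_n] \times [0, 2\pi]$ produce $\tau_n \in [h_n, 2h_n]$ and $\theta_n \in [0, 2\pi]$ with $\|f\|_{X^{-\lambda}_{\tau_n \zeta(\theta_n)}} \lesssim h_n^\mu / n$. Setting $h_n' := \tau_n$ (comparable to $h_n$, so $(h_n')^\mu \sim h_n^\mu$) and passing to a subsequence by compactness of $[0, 2\pi]$ to obtain $\theta_n \to \theta_0$, the pair $(h_n', \zeta_n)$ with $\zeta_n := e^{i\theta_n}\zeta \to e^{i\theta_0}\zeta =: \zeta_0$ satisfies the conclusion. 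The only nontrivial point is verifying that $s > k + \sigma$ always holds under the hypotheses, which is exactly the role of the special condition imposed when $\sigma = 0$; everything else is routine approximation plus pigeonhole.
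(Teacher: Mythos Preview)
Your proof is correct and follows essentially the same strategy as the paper: split $f$ into a piece small in $W^{k-s,2}$ (handled by Proposition~\ref{p:Avg}) and a piece in $L^2$ (handled by the trivial bound $(\tau+|p|)^{-2\lambda}\le \tau^{-2\lambda}$ together with the key observation $s-k-\sigma>0$), then extract the sequence via Chebyshev/pigeonhole and a diagonal argument with compactness of $[0,2\pi]$. The only cosmetic difference is that the paper uses a frequency cutoff $f=f_N+f^N$ rather than your density decomposition $f=f_1+f_2$ with $f_1\in C_c^\infty$; both produce an $L^2$ piece plus a piece with arbitrarily small $W^{k-s,2}$ norm, and the rest of the argument is identical.
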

\begin{proof}
    We will divide $f$ into high and low frequency pieces. 
    For $N>0$ we define $ f_N$ by $ \hat f_N ( \xi) 
    = \chi _{[0,N]}( |\xi|)\hat f (\xi)$ and $ f^N = f- f_N$.
  
  Given $N$, we use Proposition \ref{p:Avg} and Chebyshev's inequality to find a sequence $\{(h_n, \zeta_n)\}$ 
   with $h_n$ tending to zero and $ \zeta _n = e ^ { i \theta _n } \zeta $  so that for $(h_n, \zeta_n)$ in this sequence, we have 
    $$
      h_n ^ {\lambda +s-k - \sigma}  \| f^N\|_{ X^ { - \lambda  }_{ h_n\zeta_n  } } \lesssim 
      \| f ^ N\|_{ W^ { k-s, 2} ( \reals ^ d ) }. 
    $$

The function $f_N$ will be in $L^2$ and we may use      \eqref{e:XSob}
to find that for any $\zeta$, 
    $$ 
  h^{\lambda - k + s-\sigma}  \| f_N \| _ { X_{h\zeta}^ { -\lambda } }
    \leq   \| f_N\|_ {L^ 2} h ^ { s-k-\sigma}. 
    $$
    Our assumptions imply $ s-k - \sigma > 0$ so that  the righthand side  will tend to zero with $h$. 

   Combining our observations, for each $N$, we have a 
   sequence $\{(h_n, \zeta_n)\}$
   so that
   $$
 h_n ^ { \lambda +s-k - \sigma} \|f\| _ {X^ {-\lambda}_{h_n\zeta_n}} \lesssim 
  ( \|f^N\| _ { W^ { k-s, 2} ( \reals ^d)} 
   +  \|f_N \|_ {L^2  ( \reals ^d) } h_n ^ { s-k+ \sigma}  ).  
   $$
   Taking a limit as $n $ approaches $\infty$, we have 
   $$
 \limsup_{ n\to \infty } h _n ^ { \lambda +s-k -\sigma}   \|f\| _ {X^ {-\lambda}_{h_n\zeta_n}} \lesssim 
 \|f^N\| _ { W^ { k-s, 2} ( \reals ^d)} 
  . 
   $$
   Since $f\in W^ { k-s, 2}( \reals ^d)$, we have $\lim _{ N\to \infty} \|f^N\| _ { W^ { k-s, 2} ( \reals ^d)} =0$. 
   A diagonal argument allows us  to obtain a sequence which satisfies the conclusion of the Proposition and since $ \zeta_n = e^ { i\theta_n} \zeta$
   we may choose a subsequence so that $ \zeta_n $ converges. 
\end{proof}

In addition to \eqref{e:CoeffCond} we  assume that the coefficients $A^{(k)}$ lie in a Sobolev 
space. To be precise, we require  for some $s$ with $ s<m$
\begin{equation}
    \label{e:CoeffSobCond}  
    A^{(k)} \in \tilde W^{k-s,2}( \Omega). 
\end{equation}
From the assumption \eqref{e:CoeffSobCond} we may use  Proposition \ref{p:Avg}  (see also \cite[Theorem 4.6]{MR4455267})  
and \eqref{e:CoeffSobCond}, to  find  a sequence $\{(h_n, \zeta_n)\}$ so that  
\begin{equation}\label{e:CoeffEst}
\| A^ {(k)}\|_{X^ { (k-m)/2 }_{ h_n \zeta_n } } =o( h_n^ { 3k/2 -s- m/2 + \sigma }),\qquad 0\leq k \leq k_0. 
\end{equation}
provided $ \sigma \in [0,1)$, $s-k_0 -2\sigma\geq 0$, $m\geq s$ and $s-k_0>0$. 
As in the Proposition, we have  $\lim_{ n\to \infty } h_n =0$ and $ \zeta_n = e^{i\theta_n}$ with 
$\lim _{ n\to \infty } \zeta_n = e^ { i \theta _0} \zeta$. 

In our application, we will need to find one sequence so 
that the estimate \eqref{e:CoeffEst} holds for  the coefficients 
for an  operator $L_1$ using norms depending on $\zeta $ and  the 
coefficients of $L_2^t$
with norms defined using  $-\zeta$. 
While this is a rather lengthy set of 
conditions, it is a finite set and we can still  apply Chebyshev's inequality to 
obtain one sequence with the  desired behavior. We will refer to this sequence as our
magic sequence. 

\begin{proposition} 
\label{p:CGOestimate}
Consider the operator $L$ from \eqref{e:OpDef}, let $k_0\leq m-1$  and
assume that  $A^{(k)} =0$ for $ k > k_0$. 
Suppose that for $k=0, \dots, k_0$, the coefficients $ A^{(k)}$   satisfy \eqref{e:CoeffCond}
and  \eqref{e:CoeffSobCond} for 
some $s$ with $s \leq m$.
Assume that $s-k_0>0$ and choose $ \sigma $ with $ 0 \leq \sigma <1$ so that $ s-k_0 - 2 \sigma \geq 0$.

Choose $ \zeta \in {\cal V}$ and let $a$ be an amplitude satisfying the transport equation 
$(\zeta \cdot \nabla)^ { r+1} a =0$ for some $r$ in $ \{0, \dots, m-1\}$. 

Under these conditions, we have a sequence $\{(h_n,\zeta_n)\}$ 
with $\lim _{ n\to\infty} (h_n, \zeta_n) = (0, e^ { i \theta_0} \zeta)$  and 
so that for $(h_n, \zeta_n)$ 
in this sequence, we may find a CGO  solution as in \eqref{e:cgo} 
and  we have the estimate 
\begin{equation}
\label{e:SolEstimate}
    \|\psi \| _ { X^{m/2}_{h_n \zeta_n } }\lesssim 
    ( h_n ^ { 3m/2 - r } + h_n^ { 3m/2 -s + \sigma}).
\end{equation}
\end{proposition}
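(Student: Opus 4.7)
The plan is to construct $\psi$ by solving $L_{\zeta_n}\psi_n = f_n$, where $f_n$ is the right-hand side of \eqref{e:CGOeqn} (with a smooth cutoff $\phi$ equal to one on a neighborhood of $\bar\Omega$ applied to the amplitude piece so that $f_n$ lies in $X^{-m/2}_{h_n\zeta_n}$). Since the hypothesis $k_0\leq m-1$ forces $A^{(m)}=0$, the smallness condition on $\|A^{(m)}\|_\infty$ in Proposition \ref{prop:aprioriestimate} is automatic, and that proposition yields $\|\psi_n\|_{X^{m/2}_{h_n\zeta_n}}\lesssim \|f_n\|_{X^{-m/2}_{h_n\zeta_n}}$. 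The problem thus reduces to bounding $\|f_n\|_{X^{-m/2}_{h_n\zeta_n}}$ by the right-hand side of \eqref{e:SolEstimate}.

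For the amplitude term, I observe that $\zeta_n=e^{i\theta_n}\zeta$ is a scalar multiple of $\zeta$, so $a$ also satisfies $(\zeta_n\cdot D)^{r+1}a=0$; Proposition \ref{p:AmpProp} with $\lambda=m/2$ and $\omega=\zeta_n$ then gives $\|\phi P_{\zeta_n}(h_nD)^m a\|_{X^{-m/2}_{h_n\zeta_n}}=O(h_n^{3m/2-r})$, producing the first summand in \eqref{e:SolEstimate}.

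For the coefficient terms, I expand $(D+\zeta_n/h_n)^k=\sum_{j=0}^k\binom{k}{j}h_n^{-j}(\zeta_n^j\otimes D^{k-j})$, so each summand takes the form $h_n^{2m-j}A^{(k)}\cdot(\zeta_n^j\otimes D^{k-j}a)$. The key observation is that $\zeta_n^j=e^{ij\theta_n}\zeta^j$, whence
\[
A^{(k)}\cdot(\zeta_n^j\otimes D^{k-j}a)=e^{ij\theta_n}\,A^{(k)}\cdot g_{k,j},
\]
where $g_{k,j}:=\zeta^j\otimes D^{k-j}a$ is a fixed function independent of $n$, and $A^{(k)}\cdot g_{k,j}\in W^{k-s,2}(\reals^d)$ because $A^{(k)}\in\tilde W^{k-s,2}(\Omega)$ is compactly supported in $\bar\Omega$ while $g_{k,j}$ is smooth there. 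Since the set of pairs $(k,j)$ with $0\leq j\leq k\leq k_0$ is finite, I apply Proposition \ref{p:Avgo1} to each product $A^{(k)}\cdot g_{k,j}$ with $\lambda=(m-k)/2$ and combine the resulting good sets of parameters via Chebyshev's inequality to extract a single magic sequence $(h_n,\zeta_n)$ with $\zeta_n=e^{i\theta_n}\zeta\to e^{i\theta_0}\zeta$. The required conditions $2\lambda-2\sigma\geq s-k-2\sigma\geq 0$ and $s-k>0$ follow from $s\leq m$, $s-k_0-2\sigma\geq 0$, and $s>k_0$, and along this sequence
\[
\|A^{(k)}\cdot g_{k,j}\|_{X^{(k-m)/2}_{h_n\zeta_n}}=o\bigl(h_n^{3k/2-m/2-s+\sigma}\bigr).
\]

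The final step uses \eqref{e:xembed} with $|\alpha|=0$, $\lambda_1=-m/2$, $\lambda_2=(k-m)/2$, converting to the $X^{-m/2}$-norm at the cost of a factor $h_n^{-k/2}$; the $(k,j)$-contribution to $\|f_n\|_{X^{-m/2}_{h_n\zeta_n}}$ is therefore
\[
h_n^{2m-j-k/2}\cdot o\bigl(h_n^{3k/2-m/2-s+\sigma}\bigr)=o\bigl(h_n^{3m/2+(k-j)-s+\sigma}\bigr),
\]
and because $k-j\geq 0$ the worst case $j=k$ gives the bound $o(h_n^{3m/2-s+\sigma})$, which is the second summand of \eqref{e:SolEstimate}. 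The main obstacle is precisely the construction of the magic sequence: the observation that $\zeta_n^j\otimes D^{k-j}a$ differs from the fixed function $g_{k,j}$ only by the unimodular phase $e^{ij\theta_n}$ is essential, since it lets one apply Proposition \ref{p:Avgo1} to a fixed finite family and sidesteps any need for multiplier estimates on the $X$-spaces against functions that themselves vary with $n$.
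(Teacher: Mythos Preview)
Your proof is correct and follows essentially the same strategy as the paper: invoke Proposition \ref{prop:aprioriestimate} for the a priori bound, Proposition \ref{p:AmpProp} for the amplitude term, and the averaging estimate (Proposition \ref{p:Avgo1}, as packaged in \eqref{e:CoeffEst}) together with the embedding \eqref{e:xembed} for the coefficient terms. The only cosmetic difference is that you apply the averaging to the scalar products $A^{(k)}\cdot g_{k,j}$ (using the phase relation $\zeta_n^j=e^{ij\theta_n}\zeta^j$ to make these $n$-independent) whereas the paper applies it directly to $A^{(k)}$ and then uses that multiplication by smooth functions preserves $X^\lambda$.
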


\begin{proof}
    We will use Proposition \ref{prop:aprioriestimate} to find a solution to \eqref{e:CGOeqn} 
    in a neighborhood of $ \Omega$. Since we only
    hope to find a solution near $\bar \Omega$, we may multiply the right-hand side 
    of \eqref{e:CGOeqn} by a cutoff function $\phi$ which 
    is one in a neighborhood of
    $ \Omega$ and thus we want to estimate 
    \begin{equation}
    \label{e:TruncRHS}
     -\phi ((p(hD)^m a + h^ { 2m} \sum _{k=0}^{k_0} A^ {(k)}\cdot (D+ \zeta/h)^k a)
     \end{equation}
     in the space $X^ { -m/2}$.
   Since $r\leq m-1$, we may use  Proposition \ref{p:AmpProp}
to conclude 
    \begin{equation}
    \label{e:AmpTerms}
        \| \phi P(hD)^m a \| _{ X^ { -m/2}} \lesssim h ^{3m/2 - r}.
    \end{equation}

    To estimate the terms involving 
    the coefficients, we  use 
     \eqref{e:CoeffEst} to find a sequence  
    $(h_n,\zeta_n)$ for which  we have 
    \begin{equation}
    \label{e:CoeffTerms}
    h_n^{2m}\| A^{(k)} \cdot ( D + \zeta /h_n)^k a\|_{X^ { -m/2}} \lesssim o(h_n^{3m/2 + k-s + \sigma})
    \cdot h^  { -k} = o(h_n^{ 3m/2 -s + \sigma}). 
    \end{equation}
    Note that these estimates use that $m> s-k - 2\sigma \geq 0$, $ 0 < h \leq 1$, 
the smoothness of $a$, that $A^{(k)} $ is  
    compactly supported and the fact that the spaces $ X^ \lambda$ are preserved by 
    multiplication by smooth functions.  From the observations \eqref{e:AmpTerms} 
    and \eqref{e:CoeffTerms}, we have that the $X^ { -m/2} $-norm of the  
    expression \eqref{e:TruncRHS} is bounded by  $(  h ^ { 3m/2 -r} +h^ { 3m/2 - s+\sigma} )$. The  
    desired estimate for the solution $ \psi$ now follows from Proposition \ref{prop:aprioriestimate}. 
\end{proof}

\rv{
Before continuing, we point out that Proposition \ref{p:CGOestimate} 
presents the key improvement
that we obtain from the averaging argument of Haberman and Tataru. The averaging
argument is responsible for the factor $h^\sigma$ that appears in the estimate for
the remainder term \eqref{e:SolEstimate} and  in Proposition \ref{p:Avgo1}. This 
term is what allows 
 us to study less regular conductivities.
Without this improvement we would end up proving results similar to those established
by Assylbekov and Iyer \cite{MR4027047}. 
}

\begin{remark} Proposition \ref{p:Avgo1} gives an asymptotic statement involving 
$o(h^K)$ for some power $K$. If we take advantage of this in our proof of Proposition \ref{p:CGOestimate}, we can obtain  the slightly better estimate $ \|\psi \|_{X^{m/2}_{h\zeta} }
\lesssim (h^{ 3m/2 -r}  +o(1)h ^{ 3m/2 -s+ \sigma})$.  We do not pursue this 
since the improvement does not give a better conclusion in our main theorem. 
\end{remark}

\section{From the Bilinear Form to equations  on the coefficients} 
\label{s:maineqn}
We suppose we have two operators $L_1$ and $L_2$ of the form \eqref{e:OpDef} 
and 
we let $A^{(k)}_\ell $ denote the coefficients for $L_\ell$.
We continue to  let $k_0 \leq m-1$ be the 
index for which  $ A_\ell ^ {(k)}=0$ for $k > k_0$.

The uniqueness theorem we hope to prove begins with a hypothesis on the bilinear form 
\[ 
   B[u_1,u_2]= B_{L_1}[u_1, u _2]- B_{L_2}[u_1, u _2] 
   = \sum_{k=0}^{k_0} \langle (A^{(k)}_1 - A^{(k)}_2)\cdot D^k  u_1 , u_2 \rangle. 
\]
We say that the forms $B_1$ and $B_2$ are equal if whenever $u_1 $ is a weak solution of $L_1 u_1=0$ in $\Omega$ 
and $u_2$ is a weak solution of $L_2^t u_2=0$, then 
\begin{equation}
\label{e:form}
    B[u_1, u_2]=0. 
\end{equation}
In what follows, we will let $ A^{(k)}= A_1^ {(k)}- A_2^{(k)}$ in order to simplify
our notation. 
See \cite[Section 5]{MR4455267} for further discussion of the assumption \eqref{e:form}
and its relation to more traditional 
hypotheses on a  Dirichlet to Neumann map or Cauchy data. 

\begin{theorem}
\label{t:MEHolds}
Suppose that the coefficients  $ A_\ell ^{(k)}$,  $\ell=1,2$, $k=0, \dots, k_0$
satisfy \eqref{e:CoeffCond}  and \eqref{e:CoeffSobCond}.
Suppose that the form  $B=B_{L_1}-B_{L_2}$ satisfies 
the condition \eqref{e:form}. 
Let $\xi \in \reals ^d$ and suppose that 
$ \zeta  \in {\cal V}_\xi$, $ \omega \in \complexes ^d$. 
We set $ A^ {(k)}= A_1^{(k)}-A_2^{(k)}.$
  Assume that we are in one of the two cases of Theorem \ref{t:Main}: 
  a) $k_0=\lfloor m/2 \rfloor -1$ 
  and $m/2 < s<  m/2 +1$ or b) $m$ odd, $k_0=m/2 -1/2$ and 
  $s= m/2 + 1/2$.  Then for $r_\ell\leq m/2$, we have   
\begin{equation} 
\label{e:MEHolds}
\begin{aligned}
\sum _ { k = 0}^ { k_0-j} \binom {k+j} j 
\langle A^{(k+j)} \cdot 
(\zeta ^j \otimes D^ { k} )  (\omega \cdot x)^{r_1} , 
( \omega \cdot x) ^ {r_2} e^ { -i x\cdot \xi } \rangle =0, \\
\quad j = 0,\dots , k_0.
\end{aligned}
\end{equation}
\end{theorem}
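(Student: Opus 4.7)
The approach is to substitute CGO solutions into the identity $B[u_1,u_2]=0$ provided by \eqref{e:form}, expand in powers of $h$, and read off the equations \eqref{e:MEHolds} as the coefficients of different powers of $h^{-1}$. Fix $\zeta \in \mathcal{V}_\xi$ and choose amplitudes $a_1(x) = (\omega\cdot x)^{r_1}$ and $a_2(x) = (\omega\cdot x)^{r_2}\, e^{-ix\cdot\xi}$. Since $\zeta \cdot \xi = 0$ and $-\zeta \in \mathcal{V}_\xi$ as well, Proposition~\ref{p:AmpProp} verifies the transport equations $(\zeta \cdot D)^{r_1+1} a_1 = 0$ and $((-\zeta)\cdot D)^{r_2+1} a_2 = 0$. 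After transferring the coefficient hypotheses to $L_2^t$ via Proposition~\ref{p:AdjCoeff}, Proposition~\ref{p:CGOestimate} produces CGO solutions
\[
u_1 = e^{ix\cdot\zeta/h}(a_1+\psi_1), \qquad u_2 = e^{-ix\cdot\zeta/h}(a_2+\psi_2)
\]
of $L_1 u_1 = 0$ and $L_2^t u_2 = 0$ respectively, along a common magic sequence $(h_n,\zeta_n)\to (0,e^{i\theta_0}\zeta)$ on which the remainders $\psi_\ell$ obey the estimate \eqref{e:SolEstimate}.

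Substituting these solutions into the hypothesis, cancelling the two exponentials, and expanding $(D+\zeta/h)^k = \sum_{j=0}^k \binom{k}{j} h^{-j}\, \zeta^j \otimes D^{k-j}$, then reindexing $l=k-j$, yields
\[
0 = B[u_1,u_2] = \sum_{j=0}^{k_0} h^{-j}\bigl(J_j^{\mathrm{main}} + J_j^\psi(h)\bigr),
\]
where
\[
J_j^{\mathrm{main}} = \sum_{l=0}^{k_0-j}\binom{l+j}{j}\bigl\langle A^{(l+j)}\cdot(\zeta^j\otimes D^l)(\omega\cdot x)^{r_1},\,(\omega\cdot x)^{r_2} e^{-ix\cdot\xi}\bigr\rangle
\]
is independent of $h$ and coincides with the left-hand side of \eqref{e:MEHolds}, while $J_j^\psi(h)$ collects all the terms containing at least one factor $\psi_1$ or $\psi_2$.

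The heart of the argument is to prove the remainder bound
\[
J_j^\psi(h_n) = o(h_n^j) \qquad (j=0,\dots,k_0)
\]
along the magic sequence. Each term of $J_j^\psi$ pairs a coefficient $A^{(l+j)}$, a derivative of a remainder $\psi_\ell$, and a smooth factor; these pairings are estimated by either the trilinear Sobolev bound \eqref{e:trilinearSobolev} or the $X^\lambda$-space estimate \eqref{e:trilinear0}, combined with the CGO bound \eqref{e:SolEstimate} and the averaged coefficient bound \eqref{e:CoeffEst}. Balancing the resulting $h$-powers is where the delicate work lies: the restriction $r_\ell \le m/2$ makes the amplitude-driven portion of \eqref{e:SolEstimate} sufficiently favorable, and the specific values of $k_0$ and $s$ in cases (a) and (b), together with an admissible $\sigma \in [0,1)$ satisfying $s-k_0-2\sigma \ge 0$, are precisely what allow the averaged coefficient bound to absorb the loss coming from the $h^{-j}$ prefactor.

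Granted the remainder bound, the identity above rearranges to $\sum_{j=0}^{k_0} h_n^{-j} J_j^{\mathrm{main}} = -\sum_{j=0}^{k_0} h_n^{-j} J_j^\psi(h_n) \to 0$. The left-hand side is a polynomial of degree at most $k_0$ in $h_n^{-1}$ with $n$-independent coefficients $J_j^{\mathrm{main}}$; since it tends to zero along the sequence $h_n^{-1}\to\infty$, the leading coefficient $J_{k_0}^{\mathrm{main}}$ must vanish, and iterating this observation forces every $J_j^{\mathrm{main}}=0$. This is \eqref{e:MEHolds}.
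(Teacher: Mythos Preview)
Your outline matches the paper's proof almost exactly: construct CGO solutions with the given amplitudes along a magic sequence, substitute into \eqref{e:form}, expand in powers of $h^{-1}$, show the $\psi$-terms are $o(1)$ after multiplication by the appropriate $h$-power, and peel off the coefficients one by one. Two points deserve tightening. First, the terms $J_j^{\mathrm{main}}$ are \emph{not} literally $n$-independent: the CGO solutions are built with $\zeta_n=e^{i\theta_n}\zeta$, so the main term at level $j$ is $e^{ij\theta_n}J_j^{\mathrm{main}}(\zeta)$; the paper handles this by tracking the limiting phase $e^{ij\theta_0}$, and your polynomial argument survives once you note $|e^{ij\theta_n}|=1$ and $\theta_n\to\theta_0$. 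Second, for the $\psi_1$--$\psi_2$ cross term you need the $o(1)$ coming from \eqref{e:trilinear} and the representation \eqref{e:CoeffCond}, not just the $O(1)$ of \eqref{e:trilinear0}; with the paper's choice $\sigma=s-m/2$ (case (a)) the exponent $m-2s+2\sigma$ in \eqref{e:cond1} vanishes, so the bare $O(1)$ estimate would not suffice.
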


\begin{proof} In the first case we define $\sigma$  to be  $ \sigma = s - m/2$  and
we put 
$ \sigma =1/2$ in the second case where $m$ is odd
and $k_0 = m/2 - 1/2$.
Our amplitudes will be  $ a_1(x) = ( \omega\cdot x ) ^ { r_1}$ and 
$ a_2 (x) = e^ { -ix\cdot \xi} (\omega \cdot x) ^ { r_2}$. 
These amplitudes will satisfy 
 the transport equations, $ (\zeta \cdot D)^ { r_\ell +1} a_\ell =0$ and 
 since $r_\ell \leq m/2$, we will have $r_\ell \leq m-1$ as in 
 Proposition \ref{p:CGOestimate}. 
With these choices, we have $s-k_0 -2\sigma \geq 0$, so we may  use 
Proposition \ref{p:CGOestimate} to find 
 $u_1 (x) = e^ { ix \cdot \zeta /h} ( a_1(x)+ \psi_1(x))$ 
 a CGO solution (see \eqref{e:cgo}) of $ L_1 u _1=0$,
$u_2(x) = e^ { -ix\cdot \zeta /h} (a_2 + \psi_2(x)) $ a 
CGO solution of $ L_2^t u_2 =0$ and the corresponding magic sequence 
$(h_n, \zeta_n)$ for
which we have the estimate \eqref{e:SolEstimate}. Here, we have used 
Proposition \ref{p:AdjCoeff} to see that the coefficients of $L_2^t$ 
satisfy the conditions of
Proposition \ref{p:CGOestimate}. 
 
Substituting these solutions into the form $B$ and using the condition 
\eqref{e:form}, we have
$$
\sum _ { k=0}^{k_0} \langle A^{(k)}
  \cdot (D+ \zeta /h) ^k (a_1 + \psi_1) , (a_2 + \psi _2)\rangle = 0.
$$
Collecting like powers of $h$ we may rewrite the previous displayed equation as 
\begin{equation}
\label{e:zeroform}
\begin{split}
&\sum _ { k=0}^{k_0}  \langle A^{(k)} 
  \cdot (D+ \zeta /h) ^k (a_1 + \psi_1) , (a_2 + \psi _2)\rangle  \\ 
&= \sum _ { k=0}^{k_0} \langle A^{(k)} 
\cdot ( \sum _ { j = 0} ^k \binom k j h^{ -j} ( \zeta ^j 
\otimes D^ { k-j} )) (a_1 + \psi_1) , (a_2 + \psi _2)\rangle \\
 & = \sum_{j=0}^{k_0} h^{ -j} \sum _ { k = j}^ { k_0} \binom k j \langle 
 A^{(k)} \cdot 
(\zeta ^j \otimes D^ { k-j} ) (a_1 + \psi_1) , (a_2 + \psi _2)\rangle  \\
& = \sum _ { j=0}^{k_0} h^{ -j} \sum _ { k = 0}^ { k_0-j} \binom {k+j} j 
\langle A^{(k+j)} \cdot 
(\zeta ^j \otimes D^ { k} ) (a_1 + \psi_1) , (a_2 + \psi _2)\rangle = 0.
\end{split}
\end{equation}

Then combining \eqref{e:SolEstimate}  with \eqref{e:CoeffEst}, we  obtain 
the  following estimates for the  terms involving $\psi_\ell$, $\ell=1,2$, where we 
assume $k\geq 0$, $j \geq 0$ and  $ k+j \leq k_0$
\begin{equation}
\label{e:apsi}
\begin{split}
  h^ { -j} | \langle A ^ {( k+j)}  \cdot &( \zeta ^ j 
  \otimes D^k )a_1, \psi _2 \rangle  | 
    \lesssim h^{-j} \| A ^ {( k+j)}\|_{X^ { -m/2}}
   \| \psi _2\| _ { X^ { m/2}} \\
   & \lesssim o(h^{ -s-m/2 + \sigma+ k})
   ( h ^ { 3m/2 - s + \sigma }+ h^ { 3m /2 - r _2 }). 
\end{split}
\end{equation}
Where we have used \eqref{e:XSob} to see that $\|A^{(k+j)}\|_{X^{-m/2}}\lesssim
h^{-(k+j)/2}\|A^{(k+j)}\|_{X^{(k+j-m)/2}}$. 
Similarly, we have
\begin{equation}
\label{e:psia}
\begin{split}
h^ { -j} \langle A ^ {( k+j)}\cdot ( \zeta ^ j \otimes D^k)
\psi_1, a_2 \rangle 
&\lesssim h ^ { -j} \| A ^ {( k+j)}\| _{ X^ { ( k-m)/2}} 
  \| D^ k \psi _1 \| _{ X^ { ( m-k ) /2 }}  \\
  &   \lesssim o(h^{ -s - m/2 + \sigma})
  ( h ^ { 3m/2 - s + \sigma }+ h^ { 3m /2 - r _1 }). 
    \end{split}
\end{equation}
In the proof of  \eqref{e:psia} we use \eqref{e:xembed} 
to estimate  $ D^k \psi_1$ in the space $X^{(m-k)/2}$ in terms of the $X^{m/2}$ norm of $\psi$.

For the  estimate of the terms involving both $\psi_1$ and $\psi_2$, we use that the coefficients $A^{(k)} $ have the 
representation \eqref{e:CoeffCond}.  With 
this assumption and \eqref{e:trilinear},  we have 
\begin{equation}
\label{e:psipsi}
\begin{split}
h^{-j}\langle A^ { ( k+j)}  & \cdot (\zeta^ j \otimes D^ k)\psi _1, \psi _2 \rangle \\
& \lesssim o(1) h ^ { -2m }( h^ { 3m/2 - s + \sigma } + h^ {3m/2 -r_1} ) ( h^ { 3m/2 - s + \sigma } + h^ { 3m/2 - r _ 2} )
\end{split}
\end{equation}
where we use once more \eqref{e:SolEstimate}  to estimate $\| \psi_l\|_ { X^ { m/2}} $.
Expanding the right-hand sides of the estimates (\ref{e:apsi}-\ref{e:psipsi}) we see that these terms will go to zero
if we have 
\begin{align}
m -2s + 2\sigma &\geq 0 \label{e:cond1} \\
m -s+ \sigma  &\geq r_\ell, \quad \ell = 1,2 \label{e:amp} \\
    m   &\geq  r_1+ r_2  \label{e:ampamp}. 
\end{align}
The first condition \eqref{e:cond1} follows from our choice of $\sigma$. Given \eqref{e:cond1}, 
we have $m/2 \geq s-\sigma$ which implies that $ m-s+ \sigma \geq m/2$ 
and since $r_\ell \leq m/2$, we have \eqref{e:amp}.  The third will 
follow since we assume $ r_\ell \leq m/2$. 
Using the estimates (\ref{e:apsi}--\ref{e:psipsi}), we have  that 
for $j= 0, \dots, k_0$
\begin{equation*}
\begin{split}
\sum _ { k = 0}^ {k_0-j} \binom {k+j} j \big(\langle A^{(k+j)} &\cdot 
(\zeta ^j \otimes D^ { k} )(a_1 + \psi_1) , (a_2 + \psi _2)\rangle \\
&-
\langle A^{(k+j)} \cdot 
(\zeta ^j \otimes D^ { k})a_1  , a_2 \rangle \big)
= o(h_n ^j )
\end{split}
\end{equation*}
We take the last line of \eqref{e:zeroform}, multiply by $h^{k_0}$ and take 
the limit along our
sequence $\{(h _n, \zeta_n)\}$. Based on the estimates (\ref{e:apsi}--\ref{e:psipsi}), 
we can show that 
\begin{equation}
\begin{split}
\lim _ { n  \to \infty }  
h_n^{ k_0}\sum _ { j=0}^{k_0} h_n^{ -j} \sum _ { k = 0}^ {k_0-j} \binom {k+j} j \langle A^{(k+j)} \cdot 
(\zeta_n ^j \otimes D^ { k} ) (a_1 + \psi_1) , (a_2 + \psi _2)\rangle \\
= e^ { i k _0 \theta_0 } \langle A^{(k_0)}   \cdot \zeta^ { k_0 } a_1, a_2 \rangle . 
\end{split}
\end{equation}
And since \eqref{e:zeroform} holds we obtain that the right-hand side of 
the above equation is zero. 
This gives \eqref{e:MEHolds} in the conclusion of this Theorem for $ j = k_0$. 

We now give an inductive argument to show that if \eqref{e:MEHolds}
holds
for 
$ j\geq j_0+1$, then it also holds for $ j= j_0 $. We again consider the 
last line in \eqref{e:zeroform} and subtract an expression which we 
know is zero thanks 
to our induction hypothesis
\begin{multline}
 \sum _ { j=0}^{k_0} h^{ -j} \sum _ { k = 0}^ { k_0-j} \binom {k+j} j \langle 
 A^{(k+j)} \cdot 
(\zeta ^j \otimes D^ { k} ) (a_1 + \psi_1) , (a_2 + \psi _2)\rangle \\
- \sum _ { j =j_0+1}^ { k_0} h^{ -j} 
\sum _ { k = 0}^ { k_0-j} \binom {k+j} j 
\langle A^{(k+j)} \cdot 
( \zeta ^j \otimes D^ { k} ) (\omega \cdot x)^{r_1} , 
( \omega \cdot x) ^ {r_2} e^ { -i x\cdot \xi } \rangle =0. 
\end{multline}
Now we multiply by  $h^ { j_0} $ and let $h$ and $ \zeta $ be in our 
magic sequence.   
Thanks to the estimates (\ref{e:apsi}-\ref{e:psipsi}) and the assumptions 
(\ref{e:cond1}-\ref{e:ampamp}) 
the terms involving $\psi_\ell$ tend to zero and the limit along 
this sequence gives  the equation
$$
e^{ i \theta_0 j_0} \sum _ { k = 0}^ { k_0-j_0} \binom {k+j_0} {j_0} 
\langle A^{(k+j_0)} \cdot 
(\zeta ^{j_0} \otimes D^ { k}  ) (\omega \cdot x)^{r_1} , 
( \omega \cdot x) ^ {r_2} e^ { -i x\cdot \xi } \rangle =0.
$$
Which gives the conclusion of our inductive step. 
\end{proof}

\section{Uniqueness from the main equation}
\label{s:uniq}

In this section, we show how \eqref{e:MEHolds} in  Theorem \ref{t:MEHolds} 
can be used to show that  the difference of the coefficients 
$ A^{(k)}_1- A_2^{(k)}$ is zero. We will call \eqref{e:MEHolds} the main equation and 
will continue to use $ A^{(k)}$ to 
denote the difference $ A^{(k)} = A^{(k)}_1-A_2^{(k)}$.  
\begin{theorem}
\label{thm:uniquenessmaster} 
Let $\{A^{(k)}\}_{k=0}^{k_0}$ be a  
collection of compactly supported $k$ tensor valued distributions. Suppose that  such that for
$j=0,\dots,k_0$ the following equations hold: 
\begin{equation}
\label{eq:conditionmaster}
\sum _ { k = 0}^ { k_0-j} \binom {k+j} j \langle A^{(k+j)} \cdot 
(\zeta ^j \otimes D^ { k}  a_1) , a_2\rangle=0, \quad 
\end{equation}
Here, we let  $\xi \in \reals^d$,   $\zeta \in {\cal V}_\xi$, $\omega \in \complexes ^d$, 
$\ell=1,2$, the amplitudes
$a_1(x) =  (\omega \cdot x)^{r_1} $,  $a_2(x) =  (\omega \cdot x)^{r_2} e^ { -ix\cdot \xi} $, 
$r_\ell$ are nonnegative integers such that $r_1+r_2 \le  R $, 
$\zeta \in \mathcal{V}_\xi$
and $\omega \in \complexes^d$. Then 
$$
  \hat A^{(k)}(\xi) =
    \begin{cases} 
     0, \qquad   & k \leq R \\ 
      \xi ^ { s+1} \otimes B^ {(k-s-1)}(\xi),   \qquad & R+1\leq k  \leq 2R+1
    \end{cases}
$$
where $ s= 2R+1 -k$. 
\end{theorem}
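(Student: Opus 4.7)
The plan is to proceed by downward induction on $j$, from $j=k_0$ down to $j=0$, extracting at stage $j$ the claimed structure of $\hat{A}^{(j)}(\xi)$ from equation \eqref{eq:conditionmaster} together with the structure of the higher-order $\hat{A}^{(k)}$ (for $k>j$) already obtained at previous stages.

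The first step is to Fourier-transform \eqref{eq:conditionmaster}. Using
\[
D^{k}[(\omega\cdot x)^{r_1}] = i^{-k}\,\frac{r_1!}{(r_1-k)!}\,\omega^{k}\,(\omega\cdot x)^{r_1-k},\qquad \langle f,(\omega\cdot x)^{m}e^{-ix\cdot\xi}\rangle = i^{m}(\omega\cdot\nabla_\xi)^{m}\hat{f}(\xi),
\]
which are valid because each $A^{(k)}$ is compactly supported and hence $\hat{A}^{(k)}$ is entire analytic, the $j$-th equation becomes
\[
\sum_{k=0}^{\min(k_0-j,r_1)} c_{k,j,r_1,r_2}\,(\omega\cdot\nabla_\xi)^{r_1+r_2-k}\bigl[\hat{A}^{(k+j)}(\xi)\cdot(\zeta^{j}\otimes\omega^{k})\bigr]=0
\]
for every $(\xi,\zeta,\omega,r_1,r_2)$ with $\zeta\in\mathcal{V}_\xi$, $r_\ell\geq 0$, $r_1+r_2\leq R$; the coefficient $c_{k,j,r_1,r_2}$ is an explicit nonzero combinatorial factor.

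For the base case $j=k_0$, only the $k=0$ term is present and, reading off coefficients of $\omega^{\alpha}$ for all multi-indices $\alpha$ with $|\alpha|\leq R$, one obtains
\[
\partial^{\alpha}_{\xi}\hat{A}^{(k_0)}(\xi)\cdot\zeta^{k_0}=0,\qquad |\alpha|\leq R,\ \zeta\in\mathcal{V}_\xi.
\]
This is precisely the hypothesis of the algebraic lemma below, whose conclusion yields the required form of $\hat{A}^{(k_0)}$. For the inductive step at level $j<k_0$, the induction hypothesis provides $\hat{A}^{(k+j)}(\xi)=\xi^{s'+1}\otimes B^{(k+j-s'-1)}(\xi)$ with $s'=2R+1-(k+j)$ for each $k\geq 1$ appearing in the sum. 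Substituting into the reformulated equation and exploiting $\zeta\cdot\xi=0$, the $s'+1$ factors of $\zeta\cdot\xi$ inside $\hat{A}^{(k+j)}\cdot(\zeta^{j}\otimes\omega^{k})$ annihilate all contributions except those in which the derivatives $(\omega\cdot\nabla_\xi)^{r_1+r_2-k}$ have consumed enough of the $\xi^{s'+1}$ factor. A careful bookkeeping of the $\omega\cdot\nabla_\xi$ derivatives shows that the remaining $k\geq 1$ contributions regroup into derivative conditions of the same form as the $k=0$ piece, leaving a single clean system of constraints on $\hat{A}^{(j)}$ identical in structure to the base case. The algebraic lemma then applies again and closes the induction.

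The main obstacle—and essentially the entire content of the section—is the algebraic lemma: if $T(\xi)$ is an analytic $S^{k}(\complexes^d)$-valued function on $\reals^d$ with $\partial^{\alpha}_\xi T(\xi)\cdot\zeta^{k}=0$ for every $|\alpha|\leq R$ and every $\zeta\in\mathcal{V}_\xi$, then $T\equiv 0$ when $k\leq R$, and $T(\xi)=\xi^{s+1}\otimes B^{(k-s-1)}(\xi)$ with $s=2R+1-k$ otherwise. The proof of this reduces to a Hilbert Nullstellensatz-type statement: for each fixed $\xi$ the polynomial $\zeta\mapsto T(\xi)\cdot\zeta^{k}$ vanishes on the variety $\{\zeta\cdot\zeta=\zeta\cdot\xi=0\}$, so it lies in the ideal generated by $\zeta\cdot\zeta$ and $\zeta\cdot\xi$ in $\complexes[\zeta]$; the contribution of the $\zeta\cdot\zeta$ generator contracts to zero against the symmetric tensor $T(\xi)$ in the sense needed, leaving only the $\zeta\cdot\xi$ factor, i.e., a single factor of $\xi$ in $T(\xi)$. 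The $R+1$ derivative conditions then allow one to bootstrap this single factor up to $s+1$ factors of $\xi$. All the PDE and CGO content has already been absorbed into the statement of the theorem; the remaining work is purely multilinear-algebraic and is the lengthy part flagged in the introduction.
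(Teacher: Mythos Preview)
Your downward induction on $j$ is set up in a way that cannot close, because the ``algebraic lemma'' you rely on is false as stated. At the base case $j=k_0$, equation~\eqref{eq:conditionmaster} with index $k_0$ indeed yields only
\[
\partial^{\alpha}_{\xi}\hat{A}^{(k_0)}(\xi)\cdot\zeta^{k_0}=0,\qquad |\alpha|\le R,\ \zeta\in\mathcal{V}_\xi,
\]
but this does \emph{not} force the claimed structure. Take $T(\xi)=I_2$ when $k_0=2$, or more generally any constant tensor of the form $I_2\otimes B^{(k_0-2)}$: since $\zeta\cdot\zeta=0$ on $\mathcal V$ one has $T(\xi)\cdot\zeta^{k_0}\equiv 0$, and every $\xi$-derivative vanishes too, yet $T$ is neither zero nor of the form $\xi^{s+1}\otimes B$. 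Your Nullstellensatz sketch correctly produces the decomposition $T=I_2\otimes B^{(k_0-2)}+\xi\otimes B^{(k_0-1)}$ (this is exactly Theorem~\ref{thm:tensorstructure}), but the assertion that ``the contribution of the $\zeta\cdot\zeta$ generator contracts to zero in the sense needed'' is precisely what fails: the $I_2$ piece is a genuine obstruction invisible to all $\zeta^{k_0}$-conditions. The same defect recurs at every inductive step, since equation $j$ contributes to $\hat A^{(j)}$ only through contraction with $\zeta^{j}$.

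The information you are discarding sits in the equations~\eqref{eq:conditionmaster} with \emph{smaller} index $j'<k_0$, which also involve $A^{(k_0)}$ through the terms $A^{(k_0)}\cdot(\zeta^{j'}\otimes D^{k_0-j'}a_1)$. The paper's route is the opposite of yours: first (Lemma~\ref{l:pieces}) disentangle the system to obtain, for each fixed $k$, the full family of \emph{mixed} contractions
\[
(\omega\cdot\nabla_\xi)^{r}\hat A^{(k)}(\xi)\cdot(\zeta^{k-j}\otimes\omega^{j})=0,\qquad r+j\le R,\ j=0,\dots,k,
\]
and only then (Lemmas~\ref{l:full}, \ref{l:partial}) run an induction in which, choosing $\omega=\Re\zeta$ so that $\omega\cdot\zeta=1$, the conditions with $j\ge 1$ peel off the $I_2$ layers one at a time via identities~\eqref{e:id2}--\eqref{e:id3}. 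Your scheme reverses the order of these two reductions and thereby throws away exactly the $\omega$-information needed to kill the $I_2\otimes B$ ambiguity.
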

\begin{remark}
    The condition that $ \hat A^{(k)}(\xi) = \xi ^{s+1} \otimes B^ { (k-s-1)}(\xi)$ can be formulated 
    using the symmetric derivative as defined in Sharafutdinov \cite[p.~25]{MR1374572}, for 
    example. If $B$ is an $S^k(\complexes^d)$-valued function on $ \reals^d$, we will 
    denote the symmetric derivative of $B$ by $D\otimes B = \sigma(DB)$ which is an 
    $S^{k+1}(\complexes^d)$-valued function given as
    the symmetrization of the array $ DB$ with entries 
    $(DB)_{j\alpha}= D_j B_\alpha$.  Note that since 
    we are using $D$ rather than $ \nabla$, our 
    operator differs from Sharafutdinov's by a 
    factor of $i$.  With this 
    notation our theorem states that 
    $ A^{(k)}= D^{s+1} \otimes \check B^{(k-s-1)}$ when $ R+1\leq k \leq 2R+1$. 
\end{remark}

The proof relies on several lemmata.  A key step is the characterization of the tensors $B$ 
which satisfy $B\cdot \zeta^k=0$,     $ \zeta \in {\cal V}_\xi$.  
This characterization is the content of 
Theorem \ref{thm:tensorstructure} in subsection \ref{sec:tensorstructure} 
which we call our tensor structure theorem.  In  subsection \ref{s:derivcond}
we use the tensor structure theorem  and  \eqref{eq:conditionmaster}  
to prove the uniqueness  result, 
Theorem~\ref{thm:uniquenessmaster}. We will see that \eqref{eq:conditionmaster} 
can be interpreted as a condition on
directional derivatives of the Fourier transform of the coefficient. 
The third subsection  puts 
these two 
facts together to prove Theorem~\ref{thm:uniquenessmaster}. We then 
prove Theorem \ref{t:Main} which follows easily given the tools we have 
developed.   A final section lists some questions raised by 
this research that we have not been able to answer. 

\subsection{Tensor Structure theorem}\label{sec:tensorstructure}

Let $I_2$ be the tensor of order 2 given by 
$(I_2)_{jk}=\delta_{jk}$,  $\zeta \in \complexes^d$ 
such that $\zeta \cdot \zeta=0$ and let 
$B^{(k-2)}$ be an arbitrary $(k-2)$-tensor. Using \eqref{e:id1} we see that 
\begin{equation}\label{eq:Bk-2}
 (I_2 \otimes B^{(k-2)}) \cdot \zeta^k 
 = (\zeta \cdot \zeta) (B^{(k-2)} \cdot \zeta ^ { k-2})=0.
\end{equation}
Similarly, if $\zeta \cdot \xi=0$ and 
 $B^{(k-1)}$ is $(k-1)$-tensor, we can use \eqref{e:id1} to conclude 
\begin{equation}\label{eq:Bk-1}
(\xi \otimes B^{(k-1)} ) \cdot \zeta^k = (\zeta \cdot \xi) 
(B^ { (k-1)} \cdot \zeta ^ { k-1})=0.
\end{equation}
In Theorem~\ref{thm:tensorstructure} we show that  
any tensor $A \in S^k ( \complexes ^d)$ for 
which $ A\cdot \zeta ^k=0$ whenever $ \zeta \in { \cal V}_\xi$ is
a sum of terms as in \eqref{eq:Bk-2} and \eqref{eq:Bk-1}. 


For the theorem below and later we introduce the convention that a term involving  
a tensor of negative order is zero. For example, in the theorem below, when $ k=1$, we will show that the 
term  $I_2\otimes B^{(k-2)}=0$. 

\begin{theorem}[Tensor structure theorem] 
\label{thm:tensorstructure} 
Let $\xi \in \reals ^d$ and suppose that 
$A^{(k)} \in S^k(\complexes^d)$ with $ k \ge 1$ and that 
$$
    A^{(k)}\cdot \zeta ^k =0, \qquad \zeta \in { \cal V} _\xi . 
$$
Then,  there exist tensor valued functions $B^{(k-2)}(\xi) \in S^{k-2}(\complexes^d)$ 
and $B^ { (k-1)}(\xi) \in S^{k-1}(\complexes^d)$ such that 
\[
    A^{(k)} = I_2\otimes B^ { (k-2)}(\xi) + \xi\otimes B^ { (k-1)} (\xi). 
\]
\end{theorem}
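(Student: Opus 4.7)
The plan is to reduce the tensor statement to an ideal-membership problem for homogeneous polynomials, and then prove the membership directly. Under the standard bijection between symmetric $k$-tensors $A \in S^k(\complexes^d)$ and homogeneous degree-$k$ polynomials $p_A(\zeta) := A \cdot \zeta^k$, the symmetric tensor product corresponds to ordinary polynomial multiplication, and the identities \eqref{eq:Bk-2}--\eqref{eq:Bk-1} show that the desired decomposition $A^{(k)} = I_2 \otimes B^{(k-2)} + \xi \otimes B^{(k-1)}$ is equivalent to a polynomial identity
\begin{equation*}
p_{A^{(k)}}(\zeta) = (\zeta \cdot \zeta)\, q_1(\zeta) + (\zeta \cdot \xi)\, q_2(\zeta),
\end{equation*}
with $q_1, q_2$ homogeneous of degrees $k-2$ and $k-1$. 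So it suffices to show that $p_{A^{(k)}}$ lies in the ideal $(\zeta \cdot \zeta, \zeta \cdot \xi) \subset \complexes[\zeta_1,\dots,\zeta_d]$; taking $B^{(k-2)}$ and $B^{(k-1)}$ to be the symmetric tensors associated to $q_1$ and $q_2$ by polarization will then yield the theorem.

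My first step is to upgrade the hypothesis from the real set $\mathcal{V}_\xi$ to the complex affine variety $X_\xi = \{\zeta \in \complexes^d : \zeta \cdot \zeta = 0, \ \zeta \cdot \xi = 0\}$. For a nonzero $\zeta = u + iv \in X_\xi$ with $u, v \in \reals^d$, the conditions $\zeta \cdot \zeta = 0$ and (using that $\xi$ is real) $\zeta \cdot \xi = 0$ force $|u|=|v|$, $u \cdot v = 0$, and $u \cdot \xi = v \cdot \xi = 0$; hence $\zeta/|u|$ lies in $\mathcal{V}_\xi$, and homogeneity of $p_{A^{(k)}}$ gives $p_{A^{(k)}}(\zeta) = 0$ on all of $X_\xi$.

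For the ideal membership I would split into two cases. If $\xi = 0$, then $p_{A^{(k)}}$ vanishes on the full null quadric $\{\zeta \cdot \zeta = 0\}$, which is an irreducible hypersurface for $d \geq 3$, so $\zeta \cdot \zeta$ divides $p_{A^{(k)}}$. If $\xi \neq 0$, I choose orthonormal coordinates in which $\zeta \cdot \xi = |\xi|\zeta_1$ and perform Euclidean division by $\zeta_1$, writing $p_{A^{(k)}}(\zeta) = \zeta_1\, \tilde q(\zeta) + a_0(\zeta_2,\dots,\zeta_d)$. Setting $\zeta_1 = 0$ in the vanishing hypothesis forces $a_0$ to vanish on the lower-dimensional null quadric $\{\zeta_2^2 + \cdots + \zeta_d^2 = 0\}$; once divisibility of $a_0$ by this quadric is established, I use $\zeta_2^2+\cdots+\zeta_d^2 = (\zeta\cdot\zeta) - \zeta_1^2$ to rewrite $a_0 = ((\zeta\cdot\zeta)-\zeta_1^2)\, s$ and rearrange to
\begin{equation*}
p_{A^{(k)}} = (\zeta \cdot \zeta)\, s + \zeta_1\, (\tilde q - \zeta_1 s),
\end{equation*}
which lies in $(\zeta \cdot \zeta, \zeta \cdot \xi)$.

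The main obstacle is the divisibility step in dimension $d = 3$, where the relevant quadric $\zeta_2^2 + \zeta_3^2$ is \emph{reducible} as $(\zeta_2 + i\zeta_3)(\zeta_2 - i\zeta_3)$ and a single application of the Nullstellensatz does not give divisibility by the full quadric. For $d \geq 4$ the null quadric in $d-1 \geq 3$ variables is irreducible and divisibility is immediate; for $d = 3$ I would argue separately that the two linear factors are coprime and cut out the two irreducible components of the variety, so the homogeneous $a_0$ must vanish on each line, be divisible by each factor, and hence by their product. The translation back to tensors is immediate once the polynomial decomposition is in hand, with the convention that tensors of negative order vanish handling the edge cases $k=1$ (only the $\xi$-term survives) and $k=2$.
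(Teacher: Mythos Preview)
Your proof is correct and takes a genuinely different route from the paper's.

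The paper proceeds in two stages. First it proves Proposition~\ref{prop:tensoronV} (the case of vanishing on all of $\mathcal{V}$, equivalently $\xi=0$) by an explicit combinatorial dimension count: it parametrizes $\zeta\in\mathcal{V}$ as $ie_d+\sum_{j<d} z_j e_j$ with $\sum z_j^2=1$, separates the resulting expansion of $A^{(k)}\cdot\zeta^k$ into two polynomials according to the parity of the power of $z_{d-1}$, and shows that the vanishing of their coefficients yields exactly $\dim S^k(\complexes^d)-\dim S^{k-2}(\complexes^d)$ independent linear conditions on the entries of $A^{(k)}$. Second, for $\xi\neq0$ it restricts $A^{(k)}$ to the hyperplane $\xi^\perp$, applies the proposition in dimension $d-1$, and handles the remaining $\xi$-direction by a direct construction, then rotates back.

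Your argument instead translates everything into the polynomial identity $p_{A^{(k)}}\in(\zeta\cdot\zeta,\ \zeta\cdot\xi)$, upgrades the vanishing hypothesis from $\mathcal{V}_\xi$ to the full complex variety $X_\xi$ by the elementary scaling observation, and then obtains ideal membership by Euclidean division in $\zeta_1$ together with divisibility of the remainder by the $(d-1)$-variable null quadric. The irreducibility of $\sum_{j\geq2}\zeta_j^2$ for $d\geq4$ (so $d-1\geq3$) gives this immediately; your separate treatment of $d=3$, where this quadric factors as $(\zeta_2+i\zeta_3)(\zeta_2-i\zeta_3)$ and divisibility by each coprime linear factor must be argued individually, is exactly the right patch.

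What each approach buys: the paper's argument is entirely self-contained and elementary, requiring no algebraic geometry beyond counting monomials, at the cost of a somewhat intricate bookkeeping of indices and an explicit verification that the linear system has full rank. Your approach is shorter and more conceptual, makes the role of the ideal $(\zeta\cdot\zeta,\zeta\cdot\xi)$ transparent, and handles the $\xi=0$ and $\xi\neq0$ cases in a unified way, at the cost of invoking the Nullstellensatz (really just $I(V(Q))=(Q)$ for an irreducible $Q$) and the standard fact that a nondegenerate quadratic form in $\geq3$ variables is irreducible over $\complexes$.
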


We begin the proof of  Theorem~\ref{thm:tensorstructure}  begins by considering tensors
for which $ A^{(k)}\cdot \zeta ^k=0$ for all $ \zeta \in {\cal V}$ and in 
Proposition \ref{prop:tensoronV}
we show that such a tensor is of the form 
$ I_2 \otimes B^ { (k-2)}$.  We apply this result on a subspace of 
codimension one to obtain the theorem. 

\subsubsection{Tensors vanishing on  ${\cal{V}}$}

\begin{proposition}\label{prop:tensoronV} Let $k\ge 2$. Suppose that 
$A^{(k)} \in S^k(\complexes^d)$  and that  
\begin{equation}
\label{eq:condition}
A^{(k)} \cdot  \zeta^k=0, \quad \zeta \in { \cal V}. 
\end{equation}
Then we may find  $B^{(k-2)}$ a tensor 
in 
$ S^{k-2}(\complexes^d)$ so that 
$ A^{(k)} = I_2\otimes B^ {( k-2)} $. 
If $k=1$, then $A=0$. 
\end{proposition}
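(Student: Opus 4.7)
The plan is to translate the tensor condition into an equivalent statement about the associated homogeneous polynomial and then exploit the divisibility structure of the quadratic form $Q(\zeta) := \zeta\cdot\zeta$. For each $A \in S^k(\complexes^d)$ define the homogeneous polynomial $P_A(\zeta) := A\cdot \zeta^k$ of degree $k$; by polarization the map $A\mapsto P_A$ is a linear bijection from $S^k(\complexes^d)$ onto the space of homogeneous polynomials of degree $k$ on $\complexes^d$. A direct computation from \eqref{eq:deftensorproduct} yields the product identity $(A'\otimes B)\cdot \zeta^{j+k}=(A'\cdot\zeta^j)(B\cdot\zeta^k)$ for $A'\in S^j$ and $B\in S^k$, and specializing to $A'=I_2$ gives $(I_2\otimes B)\cdot \zeta^k = Q(\zeta)\,(B\cdot\zeta^{k-2})$. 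Consequently, tensors of the form $I_2\otimes B^{(k-2)}$ correspond under polarization precisely to the homogeneous polynomials divisible by $Q$.

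The first step is to upgrade the hypothesis from $\mathcal{V}$ to the full complex null cone $\mathcal{N}:=\{\eta\in\complexes^d : \eta\cdot\eta = 0\}$. Writing a nonzero $\eta=u+iv\in\mathcal{N}$ with $u,v\in\reals^d$, the equation $\eta\cdot\eta=0$ splits into $|u|=|v|$ and $u\cdot v=0$; both $u$ and $v$ are forced to be nonzero, so $\zeta := \eta/|u|$ lies in $\mathcal{V}$. Homogeneity of $P_A$ then gives $P_A(\eta)=|u|^k P_A(\zeta)=0$ by hypothesis.

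Next I would invoke that $Q=\sum_j\zeta_j^2$ is irreducible in $\complexes[\zeta_1,\dots,\zeta_d]$ for $d\ge 3$ (its projective zero set is a smooth, irreducible quadric hypersurface, so cannot factor into two hyperplanes). The Hilbert Nullstellensatz then forces $Q$ to divide $P_A$. If a self-contained argument is preferred, one may instead regard $Q$ as monic of degree $2$ in $\zeta_d$ and perform Euclidean division to write $P_A = Q\cdot R + R_0(\zeta') + \zeta_d R_1(\zeta')$ with $\zeta'=(\zeta_1,\dots,\zeta_{d-1})$; evaluating the remainder at the two null-cone points $(\zeta',\pm i\sqrt{\zeta_1^2+\cdots+\zeta_{d-1}^2})$, whose $\zeta_d$-coordinates differ generically, forces $R_0\equiv R_1\equiv 0$. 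In either case $P_A = Q\cdot R$ for a unique polynomial $R$, which homogeneity of $P_A$ forces to be homogeneous of degree $k-2$. Letting $B^{(k-2)}$ be the symmetric tensor corresponding to $R$ under polarization gives $P_A = P_{I_2\otimes B^{(k-2)}}$, and injectivity of the polarization map yields $A^{(k)}=I_2\otimes B^{(k-2)}$.

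The base case $k=1$ is handled separately: $P_A(\zeta)=A\cdot\zeta$ is linear, so if $A\ne 0$ its zero set would be a hyperplane; but $\mathcal{N}$ contains the vectors $e_j+ie_l$ for every pair $j\ne l$, which span $\complexes^d$, so $\mathcal{N}$ is not contained in any hyperplane, forcing $A=0$. The main obstacle in the proof is the divisibility step; either route above dispatches it cleanly, and the surrounding work (polarization, homogeneity, the tensor-product identity) is routine bookkeeping.
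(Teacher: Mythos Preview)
Your proof is correct and takes a genuinely different route from the paper's. The paper argues by dimension counting: it parameterizes $\mathcal V$ explicitly as $\zeta=ie_d+\sum_{j<d}z_je_j$ with $\sum z_j^2=1$, expands $A^{(k)}\cdot\zeta^k$ as a polynomial in $z'=(z_1,\dots,z_{d-2})$ after separating even and odd powers of $\omega=z_{d-1}$, argues that both pieces must vanish identically (using that $\sqrt{1-|z'|^2}$ is not meromorphic), checks that the resulting linear system on the entries of $A^{(k)}$ is independent, and finally verifies the binomial identity $\binom{k+d-2}{d-2}+\binom{k+d-3}{d-2}=\binom{k+d-1}{d-1}-\binom{k+d-3}{d-1}$ to match the codimension of $I_2\otimes S^{k-2}$ inside $S^k$. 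Your approach instead identifies $S^k(\complexes^d)$ with degree-$k$ homogeneous polynomials via polarization, uses homogeneity to pass from $\mathcal V$ to the full null cone $\{Q=0\}$, and then reduces the question to divisibility of $P_A$ by the irreducible quadric $Q$, handled either by the Nullstellensatz or by a short Euclidean division in $\zeta_d$. Your argument is shorter and more conceptual, and avoids the combinatorics entirely; the paper's argument is more hands-on and fully self-contained without invoking the Nullstellensatz (though your Euclidean-division alternative is equally self-contained and in fact close in spirit to the paper's even/odd split in $\omega$). Both rely on the ambient hypothesis $d\ge 3$, which you use for irreducibility of $Q$; your $k=1$ argument via spanning vectors $e_j\pm ie_l$ matches the paper's exactly.
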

\begin{proof}
 Let $k\ge 2$.
The proof relies on dimension counting and thus it is combinatorial in spirit. Since every tensor of the form 
$I\otimes B^ { (k-2)}$ satisfies that $(I\otimes B^ { (k-2)}) \cdot  \zeta^k=0$ for $\zeta \in {\cal V}$, it is 
enough to prove that the complex vector space of tensors such that  
$A^{(k)} \cdot \zeta^k=0$ has the dimension of the space of  $k-2$ symmetric tensors, $S^ { k-2} ( \complexes ^d)$. 
Notice that the dimension of  $S^ k ( \complexes ^d)$ is  
the same as the dimension  of the vector space of homogeneous polynomials of degree $k$ in $d$ variables. 
It is well-known that the dimension of this space equals
\begin{equation}
    \label{eq:degreek}
 \dim (S^k( \complexes^d))=    \binom{k+d-1}{d-1}. 
\end{equation}
See, for example, the textbook  by 
Blitzstein and Hwang \cite[Example 1.4.22]{MR3929729}. 
We will also need the number of monomials in $d$ variables of degree less or 
equal than $k$. This is equal to the number of monomials  
of degree $k$ in $d+1$ variables and by the previous result is
equal to 
\begin{equation}
    \label{eq:degreelessk}
    \binom{k+d}{d} . 
\end{equation}
In particular, \eqref{eq:degreek} shows that the dimension of  $S^ { k-2}( \complexes^d)$  is
\[
\binom{k+d-3}{d-1},
\]
and our task is to show that as a system of linear equations in the entries of $A$, the  
number of independent condtions in \eqref{eq:condition} yields exactly 
\begin{equation}
\label{e:codim}
   \dim ( S^k ( \complexes ^d) ) - \dim (S^{k-2} ( \complexes^d))
   = \binom{k+d-1}{d-1}-\binom{k+d-3}{d-1}. 
\end{equation}
We start by noticing that for $\{e_j\}_{j=1}^d$ the standard orthonormal basis of $\reals^d$ and 
\[
    \zeta=ie_d+\sum_{j=1}^{d-1} z_j e_j, \qquad \sum_{j=1}^{d-1} z_j^2=1
\]
it holds that $\zeta \cdot \zeta=0$. Therefore (a multiple of) 
$\zeta $ lies in  $ \mathcal{V}$ and \eqref{eq:condition} yields  that
\[
A^{(k)} \cdot  \zeta^k=0.
\]
We will further denote $z_{d-1}=\omega$ and observe that without loss of generality we can take 
$\omega=z_{d-1}=\sqrt{1-\sum_{j=1}^{d-2} z_j^2}$,   with
the vector $z'=(z_1,\dots, z_{d-2}) \in 
\complexes^{d-2}$ satisfying that $|z'|^2=\sum_{j=1} ^ { d-2} |z_j|^2\le 1$. 
That is,  $z'$ is an 
arbitrary member of the complex $d-2$ ball.

 The following argument is better understood by introducing the multiindices more 
 commonly found in the pde literature.  Thus   
for  $ \alpha,\beta \dots \in \{1, \dots, d\}^k$ we use   
$\bar \alpha, \bar \beta\dots$ 
for the corresponding multiindex in $\{0, \dots, k\}^d$ 
where $ \bar \alpha _j = \#\{ \ell : \alpha _\ell = j\}$. 
Thus $ \bar \alpha _j$ counts the number of times that $\alpha$ takes the value $j$. 
For $ \bar \alpha \in \{0, \dots, k\}^d$, we will write 
$ \bar \alpha = (\tilde \alpha , \bar \alpha _{ d-1}, \bar \alpha _d) $ with 
$ \tilde \alpha \in \{0, \dots, k\} ^ { d-2}$. Since our tensors are symmetric, the value of 
$A_ \alpha$ depends only on $\bar \alpha$, thus we will write $A_{ \bar \alpha} = A_\alpha$. 


Thus with $ \zeta$ as above, we may use  multilinearity to write
\begin{equation}
\label{eq:motherpolinomial} 
A^{(k)} \cdot  \zeta^k=\sum_{|\bar{\alpha}|\le k} c_{\tilde{\alpha},\bar{\alpha}_{d-1},\bar{\alpha}_d} 
A_{\tilde{\alpha},\bar{\alpha}_{d-1},\bar{\alpha}_d}( z' )^{\tilde{\alpha}} \omega^{\bar{\alpha}_{d-1}} i^{\bar{\alpha}_d}
\end{equation}
for suitable  real and 
positive coefficients $c_{\tilde{\alpha},\bar{\alpha}_{d-1},\bar{\alpha}_d}$. 
Notice that if 
$\bar{\alpha}_{d-1}$ is even, 
\begin{equation}\label{eq:omega} \omega^{\bar{\alpha}_{d-1}}
=(1-\sum_{j=1}^{d-2} z_j^2)^{\bar{\alpha}_{d-1}/2}
\end{equation}
is a polynomial in $z'$ and if $\bar{\alpha}_{d-1}$ is odd,
\[\frac{\omega^{\bar{\alpha}_{d-1}}}{\omega} 
=(1-\sum_{j=1}^{d-2} z_j^2)^{(\bar{\alpha}_{d-1}-1)/2} \]
is another  polynomial.
Therefore we can  write
\[ 
    0=A^{(k)} \cdot \zeta^k=P_1(z')+\omega P_2( z')    
\]
where $P_1$ is a polynomial of degree $k$, grouping all the even powers of $\omega$ and $P_2$ 
is a polynomial of degree $k-1$ grouping the odd powers of $\omega$. 

Now if $P_2( z') \neq 0$, then 
\[\sqrt{1-\sum_{j=1}^{d-2} z_j^2}=\omega=\frac{P_1( z')}{P_2( z')} \]
that is the singular function $\sqrt{1-\sum_{j=1}^{d-2} z_j^2}$ would be equal to a 
meromorphic rational function in the entire complex unit ball which is impossible. 
Thus we deduce that both polynomials vanish identically:
\[P_1( z') = P_2( z')=0\]
for $ z'$ in the $d-2$ complex unit ball.
Next, for $\ell=1,2$ we will write 
\[
P_1(z) = \sum _ { |\tilde \alpha | \leq k} c^{1}_ { \tilde \alpha } z^ {\tilde \alpha},\qquad
P_2(z) = \sum _ { |\tilde \alpha | \leq k-1} c^{2}_ { \tilde \alpha } z^ {\tilde \alpha},
\] 
where  for $c_{\tilde{\alpha}}^1$, $\tilde{\alpha}$ belongs to $   \{0, \dots, k\} ^ { d-2}$ and for $c_{\tilde{\alpha}}^2$, $\tilde{\alpha} $ belongs to $\in  \{0, \dots, k-1\} ^ { d-2}$. A polynomial vanishes identically if and only if its coefficients are null.
We infer that necessarily,
\begin{equation}\label{e:system} 
c^{1}_{\bar{\alpha}}=0, \ |\bar \alpha | \leq k, \qquad c^2_{\bar \alpha }=0, \ |\bar \alpha | \leq k -1. 
\end{equation}
Furthermore, each entry in  $c^{\ell}_{\bar{\alpha}}$  can be read as 
a linear  combination of the components of the tensor $A^{(k)}$.  Thus  \eqref{e:system}
is a system of linear equations where the variables are the components  of $A^{(k)}$.

We claim that this is a set of independent equations and will establish this claim below. Assuming the claim we will count the number of equations in this system and use this to prove our result. Recall that $P_1( z')$ 
is a polynomial of degree $k$ in $d-2$ variables and $P_2( z')$ is a polynomial 
of degree $k-1$. Thus \eqref{eq:degreelessk}  yields that the number of equations 
in  \eqref{e:system} is 
\[ 
\binom{k+d-2}{d-2}+\binom{k+d-3}{d-2}
\]
Now, a calculation will verify that 
\[
\binom{k+d-2}{d-2}+\binom{k+d-3}{d-2}= \binom{k+d-1}{d-1}-
\binom{k+d-3}{d-1}. 
\]
This shows that the number of equations in \eqref{e:system} is  the value we found 
in  \eqref{e:codim}. This completes the proof of the theorem, except for the 
verification of our claim that the equations in \eqref{e:system} were independent. 

The coefficients $c^\ell_{\tilde{\alpha}}$ might be rather complicated as one needs to introduce
\eqref{eq:omega} into \eqref{eq:motherpolinomial} and regroup the powers of $z'$.
We deal with $c^1_{\tilde{\alpha}}$.
For each  multiindex $\tilde{\alpha} \in \{0, \dots, k\} ^{ d-2}$, we  define
$\tilde{\alpha}_0=(\tilde{\alpha},0,k-|\tilde{\alpha}|) \in \{0, \dots, k\} ^ { d}$.  We observe that the variable  
$A_{\tilde{\alpha }_0}$  appears in the expression  $c^1_{\tilde{\alpha}}$ but not
in $c^1_{\tilde{\beta}}$ for  $\tilde{\beta} \in \{0, \dots, k\} ^{ d-2}$
with $\tilde{\beta} \neq \tilde{\alpha}$. This is because in \eqref{eq:motherpolinomial}, $A^{(k)}_{\bar{\alpha }}$ 
does not multiply powers of $\omega$. Therefore each equation $c^1_{\tilde{\alpha}}=0$ can be written as 
\[ 
A^{(k)} _{\tilde{\alpha}_0}
=\sum_{\bar{\alpha}\neq \tilde{\beta}_0, \tilde{\beta} \in \{0, \dots, k\} ^{ d-2} } 
c_{\bar{\alpha}} A_{\bar{\alpha}}
\]

That is,   in each of these linear equations we have a new variable which does not appear in the rest. 
In other words,  the set of equations $c^{1}_{\bar{\alpha}} =0 $ are independent equations on the 
coefficients $A_{\bar{\alpha}}$. Exactly, the 
same argument works for the system  $c^{2}_{\bar{\alpha}} =0 $. 
Therefore, we have verified our claim of the independence 
of the system \eqref{e:system} and the proof for $k \ge 2$ concluded. 

For the case $k=1$, we need less machinery. Pick distinct indices  
$j$ and $k$ with $1\leq j,k \leq d$ 
and consider the vectors $\zeta _{ \pm} = e_ j \pm i e_k$. 
Since $A^{(1)}\cdot \zeta _{\pm}=0$, we have $A^{(1)}_j \pm i A^{(1)}_k=0$. 
It follows that each component of $A^{(1)}$ is zero. 
\end{proof}
\subsubsection{Proof of the structure theorem}
Finally we gather all our knowledge to complete the proof of 
Theorem~\ref{thm:tensorstructure}

\begin{proof}[Proof of the structure theorem]
We begin by considering the case when $ \xi = e_d$, the 
unit vector in the direction of the last coordinate. 
We let $ A^{(k)}$ be a symmetric tensor of order $k$ and assume 
that $ A^{(k)}\cdot \zeta ^k=0$ 
for $ \zeta \in {\cal V}_{e_d} = {\cal V} \cap (\complexes ^{d-1} \times \{0\})$. 
We define a tensor $ A' \in S^k ( \complexes ^{d-1})$ by 
$ A' _ \alpha= A^{(k)}_\alpha$ for all $ \alpha \in \{ 1, \dots, d-1\}^k$. In 
other words, we drop all entries of $A^{(k)}$ for which at least one index $\alpha _j = d$. 
We observe that we have $A' \cdot \zeta ^k=0 $ for all $ \zeta $ in the $d-1$-dimensional version of 
$ {\cal V}$ since if $\tilde \zeta = (\zeta, 0)\in {\cal V}_{e_d}$, then   $A' \cdot \zeta^k =A^{(k)} \cdot\tilde \zeta^k$. 

Thus, we may apply Proposition \ref{prop:tensoronV} 
to $A'$ to write $ A' = I_2'\otimes B'$ for some $ B' $ in
$S^ { k-2}(\complexes ^{d-1})$. (We use $I_2' $ for the identity $d-1 \times d-1$ matrix.) 
We now extend $B' $ to be a tensor in $S^k ( \complexes ^d)$ by defining 
$$
B_ \alpha = \begin{cases} B'_ \alpha, \qquad & \alpha \in \{1, \dots, d-1\}^k\\
                                 0, \qquad & \alpha_j = d \ \mbox{for some $j$}
                                 \end{cases}
$$
Next we consider  the tensor $ \tilde C= A^{(k)} - I_2 \otimes B$.  By our construction of 
$ B'$ and $B$, we have
$ \tilde C_\alpha=0$  for all indices $\alpha$ in $\{ 1, \dots, d-1\}^k$.  
 We define a tensor 
$C\in S^ { k-1}( \complexes ^d)$, by  $ C_ \alpha = \frac  k {1+n(\alpha)} \tilde C_{d \alpha }$ where
$n(\alpha ) = \# \{ j : \alpha_j = d\}$.  We claim that we have 
$ \tilde C= e_d \otimes  C$. Given the claim, 
the desired decomposition  $ A^ {(k)} = I_2 \otimes B + e_d \otimes C$  holds. 

To establish the claim, we begin by observing that for $ \alpha \in \{ 1, \dots, d-1\} ^k$, we have 
$\tilde C_ \alpha = (e_d \otimes C )_\alpha =0.$  If $ \alpha$ is a multiindex with at least 
one entry $ \alpha_j =d$, then we may form a multiindex $ \alpha'$ of length $k-1$ 
by omitting the first index $ \alpha_j $ for
which $ \alpha _j=d$. Recalling our definition of the symmetric tensor product
\eqref{eq:deftensorproduct}, 
we have
$$
( e_d \otimes C)_\alpha = \frac 1 { k!} (\# \{ \pi \in \Pi(k) : \alpha _{ \pi (1) } =d \} )
\frac k {n( \alpha ' )+1} \tilde C_{ d \alpha'} .
$$ 
Since $ \# \{ \pi : \alpha_{\pi (1)} =d\} 
= (n(\alpha') +1) (k-1)!$, we have that 
$( e_d \otimes C)_\alpha=\tilde C_{d\alpha'}$.
Since the multiindex $\alpha$ is a permutation
of $d\alpha'$ and the tensor $ \tilde C$ is symmetric, 
it follows that $ \tilde C_\alpha= \tilde C_{d\alpha'}$ which establishes the claim. 

Before removing our restriction that $ \xi =e_d$, we discuss how a tensor transforms under a linear transformation. 
If $ R$ is a $d\times d $ matrix and   $A^{(k)}\in S^{k}(\complexes^d)$,   we may define 
a tensor $ RA^{(k)}$ by  
$$
 (RA^{(k)})_\alpha = \sum  _{\beta \in \{1,\dots,d\}^k}  R_{ \alpha_1\beta_1}\dots R_{ \alpha_k\beta_k}
 A^{(k)}_{ \beta_1\dots\beta_k} .
$$
We observe that we have that $ (RA^{(k)})\cdot B^{(k)}= A^{(k)}\cdot (R^tB^{(k)})$ where
$R^t$ is the transpose of $R$ and that we have $ R(A\otimes B)= (RA)\otimes (RB)$.

We are ready to consider the case when $ \xi $ is a general vector 
in $ \reals ^d$. In the case that $ \xi =0$, 
then $ {\cal V}_\xi =\cal V $ and the desired result follows directly from 
Proposition \ref{prop:tensoronV}. 
If $ \xi \neq 0$, we choose an orthogonal matrix $R$   which satisfies
$R^t e_d = \xi/|\xi|$. Since $R$ is orthogonal, we have $R \zeta \in \cal V$ for all $\zeta\in{\cal V}$. 
Also, $R\zeta\cdot e_d = \zeta \cdot R^t e_d$, thus $R({\cal V}_\xi)={\cal V} _{e_d}$. 
Given $ A ^ {(k)} \in S^k ( \complexes ^d)$ we 
have $ (RA^{(k)}) \cdot (R\zeta) ^k = A^{(k)} \cdot  \zeta ^k$ and thus $ RA^{(k)}\cdot \zeta ^k=0$ for
$ \zeta \in {\cal V }_{e_d}$. 
 
We apply the  case where $ \xi = e_d$ to find $ B \in S^{k-2} ( \complexes ^d)$ and 
$C \in S^ { k-1}( \complexes^d)$ 
so that  $R A^{(k)}= I_2 \otimes B + e_d \otimes C$. Applying the transformation $R$, we have 
$$
A^ {(k)}= R^tR A^{(k)}
= R^t ( I_2 \otimes B +e_d \otimes C)
$$
We can rewrite the terms in this last expression as  
\begin{align}
\label{e:rot1}R^t (I_2 \otimes B)  &= (I_2 \otimes R^tB)\\
\label{e:rot2} 
R^t(e_d \otimes C)  &= (R^te_d \otimes R^tC)
 \end{align}
Using the identities (\ref{e:rot1},\ref{e:rot2}) we have 
$$
A^{(k)} = I_2 \otimes R^tB + (\xi/|\xi| ) \otimes R^tC=
I_2 \otimes R^tB + \xi \otimes (|\xi|^{-1}R^tC).$$
This completes the proof of the Theorem. 
\end{proof}

\subsection{Conditions on directional derivatives on the Fourier side}
\label{s:derivcond}
Our next step is to show that if we have \eqref{eq:conditionmaster} (for the same set of $\zeta$, 
$\xi$ and $\omega$ as in Theorem~\ref{thm:uniquenessmaster}),
then we 
also have that each term in these sums also vanishes. 

\begin{lemma}
\label{l:pieces}
Let $\{A^{(k)}\}_{k=0}^{k_0}$ be a  
collection of compactly supported $S^k(\complexes^d)$-valued distributions 
satisfying the  main equation \eqref{eq:conditionmaster} 
with 
$r_1+ r_2 \leq R$. 
Then 
\begin{equation}
\label{e:pieces}
\langle A^{(k)} \cdot (\zeta^{j} \otimes \omega^{k-j}),  
(\omega \cdot x)^r e^{-i x \cdot \xi} \rangle=0
\end{equation}
for $j=0, \dots, k$ and  $r\geq 0$ such that $r+k-j \le R$.  

Equivalently, 
we obtain the following conditions on the Fourier transform side 
with $ j=0, \dots, k$, $ r \geq 0$ and $r+j \leq R$: 
\begin{equation}
\label{e:derivativesoftensors}
(\omega \cdot \nabla_\xi)^r \hat A^{(k)}(\xi)\cdot (\zeta^{k-j}  
\otimes \omega^{j} )=0. 
\end{equation}
\end{lemma}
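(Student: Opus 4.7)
The plan is to peel off the terms of \eqref{eq:conditionmaster} one at a time by a triangular induction, varying the two exponents $r_1,r_2$ in the amplitudes while keeping their sum fixed.

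First, using $D=\frac{1}{i}\nabla$ one computes
$$
D^k\,(\omega\cdot x)^{r_1} \;=\; (-i)^k\,\tfrac{r_1!}{(r_1-k)!}\;\omega^k\,(\omega\cdot x)^{r_1-k},
\qquad k\le r_1,
$$
and $0$ otherwise; write $c_{r_1,k}$ for the scalar prefactor. Inserting $a_1=(\omega\cdot x)^{r_1}$ and $a_2=(\omega\cdot x)^{r_2}e^{-ix\cdot\xi}$ into \eqref{eq:conditionmaster} and merging the two scalar factors of $(\omega\cdot x)$ into one, the main equation becomes
$$
\sum_{k=0}^{\min(r_1,\,k_0-j)}\binom{k+j}{j}\,c_{r_1,k}\;T_k(r_1+r_2-k)=0,
\qquad
T_k(r):=\langle A^{(k+j)}\!\cdot(\zeta^j\otimes\omega^k),\,(\omega\cdot x)^r e^{-ix\cdot\xi}\rangle,
$$
valid for all $(r_1,r_2)$ with $r_1,r_2\ge 0$ and $r_1+r_2\le R$.

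Next, I fix $j\in\{0,\dots,k_0\}$ and a target sum $s\in\{0,\dots,R\}$, and let $r_1$ range over $\{0,1,\dots,s\}$ with $r_2=s-r_1$. Every unknown that appears in the resulting $s+1$ equations has the form $T_k(s-k)$ with $0\le k\le \min(s,k_0-j)$, and the equation coming from $r_1=\ell$ involves only those with $k\le\min(\ell,k_0-j)$. The system is therefore lower triangular, its diagonal entry in row $\ell$ being $\binom{\ell+j}{j}c_{\ell,\ell}\ne 0$. A simple induction on $\ell$, starting from $T_0(s)=0$ (the $r_1=0$ row), yields $T_k(s-k)=0$ for every admissible $k$. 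Writing $r=s-k$, this gives $T_k(r)=0$ whenever $r+k\le R$ and $k+j\le k_0$, which is exactly \eqref{e:pieces} after relabelling the tensor order $k+j$ as $k$.

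Finally, the Fourier-side formulation \eqref{e:derivativesoftensors} is an immediate restatement via the identity
$$
\langle f(x),\,(\omega\cdot x)^r e^{-ix\cdot\xi}\rangle \;=\; i^r\,(\omega\cdot\nabla_\xi)^r \hat f(\xi),
$$
applied to $f(x)=A^{(k)}\cdot(\zeta^{k-j}\otimes \omega^j)$, together with the renaming $j\leftrightarrow k-j$ needed to align the two indexings. I do not expect any real obstacle; the only point requiring care is the bookkeeping of the shifted exponent $r=s-k$ introduced when the derivatives reduce the power of $(\omega\cdot x)$ in $a_1$, and verifying that for each fixed $s$ the resulting finite linear system in $\{T_k(s-k)\}$ really is lower triangular so that the inductive elimination goes through unimpeded.
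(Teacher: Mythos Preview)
Your proposal is correct and takes essentially the same approach as the paper: both exploit that $D^k(\omega\cdot x)^{r_1}=0$ for $k>r_1$ to obtain a lower-triangular system in the quantities $T_k$, and both solve it by induction on $r_1$ (equivalently, on the order of the surviving tensor). The only difference is packaging---you fix the total $s=r_1+r_2$ and then induct on $r_1$, while the paper inducts directly on the tensor order $\ell$ and lets $r_2$ vary freely at each step---but the underlying triangular elimination is identical.
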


\begin{proof}
We fix $j$ in the main equation \eqref{eq:conditionmaster} and vary the powers in the amplitudes 
$a_1$ and $a_2$ to obtain the result. We relabel the main equation by replacing $k$ by $\ell -j$ and
obtain 
that for $j =0, \dots, k_0 $ we have that
\begin{equation} 
\label{eq:remainequation}
\sum_{\ell = j}^{k_0} \binom \ell  j \langle A^{(\ell )} 
\cdot (\zeta^{j} \otimes D^{\ell -j})
a_1, a_2 \rangle 
=0
\end{equation}
We will obtain \eqref{e:pieces} by induction on $k$ from $j$ to
$k_0$. 

To set up the proof, 
will fix $k$ and $j$  and consider   family of equations indexed by 
$r\geq 0 $ and $\ell $ 
\begin{equation}
\label{eq:piecesinduction}
\langle A^{(\ell )} \cdot (\zeta^{j} \otimes 
\omega^{\ell -j}), (\omega \cdot x)^r e^{-i x \cdot \xi} \rangle=0, \quad j \leq 
\ell \leq k , \ r+ \ell -j \leq R . 
\end{equation}
 Our inductive claim is that
if \eqref{eq:piecesinduction} holds for the family determined 
by $k$ and $j$ for some $k$ with $k=0, \dots, k_0-1$, then we 
also have \eqref{eq:piecesinduction}
for the family determined by $k+1$ and the same $j$. 

We consider the base case with $k=j$ (and, of course, $k$ is one of $0, \dots, k_0$). 
We choose  $a_1=1, a_2=(\omega \cdot x)^r$ with $r \le R$. With this
choice, it holds that $D^{\ell -j} a_1=0$ as long as $\ell >j$.  From this,  the 
only remaining   term in \eqref{eq:remainequation}
is 
\[ 
\langle A^{(j)} \cdot \zeta^j,(\omega \cdot x)^r e^{-ix\cdot \xi} \rangle=0 
\]
as claimed.

Now, we establish the inductive claim. Thus we assume that 
\eqref{eq:piecesinduction} holds for
the family determined by $k$ and $j$ and we need to 
establish \eqref{eq:piecesinduction} for $\ell  = k+1$.  
Thus we need to consider exponents $k+1-j $ and $r$ with $ r+k+1-j \leq R$. 
We  fix 
the amplitudes in \eqref{eq:remainequation} for the rest of the argument as 
 $ a_1 (x) = ( \omega \cdot x ) ^{k+1-j}$ and  
$a_2(x) = ( \omega \cdot x ) ^{r} e^ { -ix\cdot \xi } $.
Note that the corresponding powers $r_1,r_2$ in \eqref{eq:remainequation}  satisfy 
$r_1+r_2 \le R$.

We next compute the  derivative of order $\ell -j$ of $a_1$ to obtain
\begin{equation}\label{eq:derivativeweight}
    D^{\ell -j} a_1(x) = \begin{cases}
 \displaystyle \frac {(k+1-j)!}{(k+1-{\ell })!} i^{\ell -j} 
 (\omega\cdot x)^{k+1-\ell}\omega^{\ell -j} ,
   \quad& \ell\leq k+1 \\
    0, &  \ell  >k+1. 
    \end{cases}
    \end{equation}
Therefore,
\[
\langle A^{(\ell)} \cdot (\zeta^{j} \otimes D^{\ell -j}) a_1, a_2 \rangle =
\frac {(k+1-j)!}{(k+1-\ell )!} i^{\ell-j}
\langle A^{(\ell )} \cdot (\zeta^{j} \otimes 
\omega^{\ell -j}), (\omega \cdot x)^{r+k+1-\ell } 
e^{-i x \cdot \xi} \rangle.
\]

We may use the induction hypothesis that  \eqref{eq:piecesinduction} 
holds on the family determined by $k$ and $j$ to conclude that,
\[\langle A^{(\ell )} \cdot (\zeta^{j} \otimes D^{\ell -j} )a_1, a_2 \rangle=0, \quad \ell = j, \dots, k.  \]
Using the second line of \eqref{eq:derivativeweight} we see 
 the terms with $\ell >k+1$ 
vanish as well. Thus (for these amplitudes and for $j$) the equation \eqref{eq:remainequation} reduces to 
\[ 
\langle A^{(k+1)} \cdot (\zeta^{j} \otimes \omega^{{k+1}-j}),  
(\omega \cdot x)^r e^{-i x \cdot \xi} \rangle=0.
\]
Thus we have shown that \eqref{eq:piecesinduction} holds 
for $\ell =  k+1$.
This completes the proof of the 
inductive step and hence the proof of \eqref{e:pieces}. The second version of
our result 
\eqref{e:derivativesoftensors} follows from \eqref{e:pieces} using 
standard properties of the Fourier transform and interchanging the roles of $j$ and $k-j$. 
\end{proof}

\subsection{Proof of Theorem~\ref{thm:uniquenessmaster}}

We now give the proof of Theorem \ref{thm:uniquenessmaster}. The heart of the proof 
is  the following lemma, Lemma \ref{l:full}. Using 
our tensor structure theorem, Theorem \ref{thm:tensorstructure}, we are able to 
establish that 
a tensor-valued distribution $A^{(k)}$ satisfying \eqref{e:derivativesoftensors} must 
have a special structure. By iterating this argument, we impose additional 
restrictions until we are finally able to  conclude that $ A^{(k)}$ is zero.  As 
a byproduct of this argument we see that in the case we do not have 
\eqref{e:derivativesoftensors} for enough values of $r$ and $j$ to conclude that
$A^{(k)}$ is zero, we still obtain some information about the structure of $A^{(k)}$.

Throughout this section,  $A^{(k)}$ represents a compactly supported 
tensor valued distribution and thus its Fourier transform, $ \hat{A}^{(k)}$, will 
be  a 
tensor valued $C^\infty$-function. We will often  write   $ \hat A ^ {(k)}$ 
in terms of tensor valued $C^ \infty$ functions $B^{(l)}$ of order $l<k$. 
The precise functions $B^{(l)}$ are less 
important and we will allow $B^{(l)}$ to denote different functions as we 
proceed through the proof.

\begin{lemma}
\label{l:full}
    Assume that  $A^{(k)}$ satisfies  \eqref{e:derivativesoftensors} for nonnegative integers $r$ 
    and $j$ with  $r + j \leq k$. Then $ A^{(k)}=0$. 
\end{lemma}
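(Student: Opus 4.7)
The plan is to extract the conclusion directly from a single endpoint instance of the hypothesis, namely $r = 0$ and $j = k$. Both are nonnegative integers and $r + j = k \leq k$, so this pair is admissible. At $j = k$ the factor $\zeta^{k-j} = \zeta^{0}$ is an empty symmetric product, and \eqref{e:derivativesoftensors} collapses to
\[
\hat A^{(k)}(\xi) \cdot \omega^k = 0 \qquad \text{for every } \omega \in \complexes^d \text{ and every } \xi \in \reals^d.
\]

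The second step is a polarization argument. Writing out the contraction via \eqref{e:ConDef}, the left-hand side equals $\sum_{|\alpha|=k} \hat A^{(k)}_\alpha(\xi)\, \omega_{\alpha_1}\cdots\omega_{\alpha_k}$, a homogeneous polynomial of degree $k$ in the components of $\omega$ whose coefficients are (up to fixed multinomial factors) exactly the components of the symmetric tensor $\hat A^{(k)}(\xi)$. Identical vanishing of this polynomial in $\omega$ forces every component $\hat A^{(k)}_\alpha(\xi)$ to be zero, so $\hat A^{(k)}(\xi) = 0$ pointwise in $\xi$, and inverting the Fourier transform yields $A^{(k)} = 0$ as a compactly supported distribution.

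The only delicate point worth a line of justification is the interpretation of the endpoint $j = k$, since the paper formally defines $A^k$ only for $k \geq 1$. I would handle this by tracing back to the equivalent real-side form \eqref{e:pieces} in Lemma \ref{l:pieces}: the corresponding instance there comes from the amplitude $a_1 = 1$ in \eqref{eq:conditionmaster} and produces the unambiguous pairing $\langle A^{(k)} \cdot \omega^k,(\omega\cdot x)^r e^{-ix\cdot\xi}\rangle = 0$, which is exactly what the Fourier-side formulation intends. If instead one wished to remain in the spirit of the authors' introductory remark about iterating the tensor structure theorem, one could start from the $r = j = 0$ instance, apply Theorem \ref{thm:tensorstructure} to write $\hat A^{(k)}(\xi) = I_2\otimes B^{(k-2)}(\xi) + \xi\otimes C^{(k-1)}(\xi)$, and then use successively higher values of $r$ and $j$ to derive analogous structural constraints on $B^{(k-2)}$ and $C^{(k-1)}$, iterating until the tensor order drops below $2$. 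This longer route is not needed for Lemma \ref{l:full} itself, but it has the advantage of simultaneously producing the partial structural conclusion required for the range $R+1 \leq k \leq 2R+1$ in Theorem \ref{thm:uniquenessmaster}.
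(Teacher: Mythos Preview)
Your argument is correct and considerably shorter than the paper's. The paper never invokes the endpoint $j=k$; instead it fixes $\omega=\re\zeta$ (so that $\omega\cdot\zeta=1$ and $\omega\cdot\xi=\zeta\cdot\xi=0$) and runs a double induction built on the tensor structure theorem. An outer induction on $n$ shows $\hat A^{(k)}(\xi)=I_2^{\,n}\otimes B^{(k-2n)}(\xi)$: at each step one contracts against $\zeta^{k-n}\otimes\omega^n$, uses the identities \eqref{e:id2}--\eqref{e:id3} to isolate $B^{(k-2n)}\cdot\zeta^{k-2n}=0$, and applies Theorem \ref{thm:tensorstructure}. An inner induction on $s$ then uses the higher directional derivatives $(\omega\cdot\nabla_\xi)^s$ to absorb the $\xi$-term that the structure theorem introduces. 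The process terminates at $n=\lfloor k/2\rfloor+1$ with $\hat A^{(k)}=0$.

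Your single-shot use of arbitrary $\omega\in\complexes^d$ at $(r,j)=(0,k)$ bypasses all of this machinery for Lemma \ref{l:full}. The trade-off, which you already flag, is that the paper's inductive scheme is precisely what is reused for Lemma \ref{l:partial}: there only $r+j\le R<k$ is available, so your endpoint $j=k$ is out of reach, and one genuinely must peel off structure layer by layer, stopping at the residual form $\xi^{s+1}\otimes B^{(k-s-1)}$. So your route buys a clean proof of Lemma \ref{l:full} at the price of needing a separate argument for the partial case, whereas the paper handles both lemmas with essentially one mechanism.
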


\begin{proof}
    The heart of the proof of the Lemma
    is two inductive claims. We state and prove these 
    claims and then give
    the details needed to complete 
    the proof. Throughout this argument we will use that 
    $ \omega =  \re \zeta$ so 
    that $ \omega \cdot \zeta =1$ and $ \xi \cdot \zeta = \xi \cdot \omega =0$. 

    Our first claim is that if for some $n$ with $n=0, \dots, \lfloor k/2\rfloor -1 $ and 
    $s$ with $s=0, \dots, k-2n$, we have \eqref{e:ind1} below, then we also have \eqref{e:ind1} with $s$ replaced by $s+1$ and the same value of $n$
    \begin{equation} 
    \label{e:ind1}
        \hat A^{(k)}( \xi) = I_2 ^ { n+1}\otimes B^ { (k-2n-2)}(\xi) + I^n_2 
        \otimes \xi ^s \otimes B^{(k-2n-s)}(\xi). 
    \end{equation}

    To establish this first claim, we first  observe that $ n+s \leq 
    2n+s\leq k$ and thus we are able to use \eqref{e:derivativesoftensors} for $r=s$ 
    and $ j=n$. We  substitute our induction hypothesis into \eqref{e:derivativesoftensors} and obtain
    \begin{multline*}
 0= (\omega\cdot \nabla _\xi)^s  \hat A^{(k)}( \xi) \cdot
 ( \zeta ^{k-n}\otimes \omega ^n )
 = (I_2 ^ { n+1}\otimes (\omega\cdot \nabla _\xi)^sB^ { (k-2n-2)}(\xi) )
 \cdot ( \zeta ^{k-n}\otimes \omega ^n ) \\
  + (\omega\cdot \nabla _\xi)^s( I^n_2 
        \otimes \xi ^s \otimes B^{(k-2n-s)}(\xi))\cdot ( \zeta ^{k-n}
        \otimes \omega ^n ).
    \end{multline*}
    The first term on the right will vanish thanks to \eqref{e:id2}. 
    We expand the derivative in the second term using the product rule and
    observe that every term with a positive power of $\xi$ will
vanish since $\zeta \cdot \xi = \omega \cdot \xi =0$. Using \eqref{e:id1} 
and \eqref{e:id3} the previous
displayed equation simplifies to 
 $$
    0=(\omega \cdot \nabla_\xi) ^s \hat A^{(k)}(\xi) \cdot (\zeta^{k-n}\otimes\omega^n)
    = c s! (\omega \cdot \zeta) ^ { n+s} B^ {(k-2n-s)}(\xi) \cdot \zeta ^ { k-2n-s}. 
    $$
    We conclude $ B^ {(k-2n-s)}\cdot \zeta ^ { k-2n-s} =0$. Using our tensor structure theorem, 
    Theorem \ref{thm:tensorstructure}, we 
    can write $ B^ { (k-2n-s)}( \xi) = I_2 \otimes B^ {(k-2n-s-2)}(\xi)+ \xi 
    \otimes B^{(k-2n-s-1)}(\xi) $. 
    Substituting this expression into \eqref{e:ind1}  and regrouping gives \eqref{e:ind1} with $s$ 
    replaced by $s+1$. Thus we have established our first claim. 

    The second inductive claim is that if for some $n$ with $ n=0, \dots, \lfloor k/2\rfloor $, 
    we have
    \begin{equation}
        \label{e:ind2} 
        \hat A^{(k)}(\xi) = I_2 ^n \otimes B^ {(k-2n)}(\xi),
    \end{equation}
    then we obtain the same result with $ n$ replaced by $n+1$. 
    
To establish the second claim, we use \eqref{e:derivativesoftensors} with $ r=0$ and $j=n$. 
Using \eqref{e:id3}, we conclude
$$ 0= c ( \omega \cdot \zeta) ^ n B ^ {(k-2n)} \cdot \zeta ^ {k-2n}. 
$$
Since $ B ^ {(k-2n)} \cdot \zeta ^ { k-2n} =0$, we may use our tensor structure theorem,
Theorem \ref{thm:tensorstructure},  to conclude that
\begin{equation} 
\label{e:IndDecomp}
B^ {(k-2n)}(\xi) = I_2 \otimes B^{(k-2n-2)}(\xi) + \xi \otimes B^{(k-2n-1)}(\xi).
\end{equation}
Substituting this into \eqref{e:ind2} 
gives \eqref{e:ind1} with $s=1$ (and the same $n$). Now we apply our inductive claim 1 
for $ s=1, \dots, k-2n$ and conclude 
that 
$$ \hat A ^{(k)} ( \xi) = I_2 ^ { n+1} \otimes B^ { (k-2n-2)}( \xi) 
+ I_2^n\otimes\xi^{ k-2n+1}\otimes B^ { (-1)}( \xi). 
$$
Recalling  our convention that terms involving tensors 
of negative order are in fact zero, we see that 
we have established
\eqref{e:ind2} holds with $n $ replaced by $n+1$. This 
completes the proof of our second claim. 
Note that when $ n= \lfloor k/2\rfloor$, then both terms 
on the right of \eqref{e:IndDecomp} are zero
so that we conclude that $\hat A ^{(k)}=0$.

To complete the proof of the Lemma, we observe that  we may obtain \eqref{e:ind2} 
with $n=0$ by letting  
$ B^ {(k)} = \hat A^{(k)}$. 
Applying our second inductive claim  for $n=0, \dots, \lfloor k/2\rfloor$, we conclude 
that  $\hat A^{(k)}=0$ 
and the Lemma is proved. 
\end{proof}

We observe that if we have \eqref{e:derivativesoftensors} for a smaller 
range of values of $r+j$, we can still follow the steps in the 
proof of Lemma \ref{l:full}. However, the induction will stop 
before we can conclude the $ \hat A^{(k)}$ is zero, but we still obtain some 
information about the structure of $\hat A^{(k)}$. The next 
Lemma gives a precise result in this direction. 
\begin{lemma}
\label{l:partial}
Let 
    $ R+1\leq k \leq 2R+1$ and suppose that $ A^{(k)}$ 
    satisfies    \eqref{e:derivativesoftensors} 
    for $r$ and $j$ with $r+j \leq R$. 
    Then for $ s= 2R+1-k$, we have
    $$
    \hat A ^{(k)} ( \xi) = \xi ^ { s+1} \otimes B ^ { (k-s-1)}(\xi). 
    $$
\end{lemma}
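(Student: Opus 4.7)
My plan is to adapt the inductive framework of the proof of Lemma~\ref{l:full}, carrying along an additional term of exactly the form the conclusion prescribes. Setting $s = 2R+1-k$, I would introduce the intermediate statement
\[
P_n: \quad \hat A^{(k)}(\xi) = I_2^n \otimes B_n^{(k-2n)}(\xi) + \xi^{s+1} \otimes C_n^{(k-s-1)}(\xi)
\]
for smooth tensor-valued functions $B_n, C_n$. The base case $P_0$ is trivial (take $B_0 = \hat A^{(k)}$ and $C_0 \equiv 0$). If I establish the implication $P_n \Rightarrow P_{n+1}$ for $n = 0, 1, \dots, R$, then at $P_{R+1}$ the leading term has order $k - 2(R+1) \le -1$ (using $k \le 2R+1$) and so vanishes by the convention on negative-order tensors, leaving exactly $\hat A^{(k)}(\xi) = \xi^{s+1} \otimes C_{R+1}^{(k-s-1)}(\xi)$, as required.

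For the inductive step I would mimic the two-stage argument from the proof of Lemma~\ref{l:full}. First apply \eqref{e:derivativesoftensors} with $r=0$, $j=n$ (legal since $n \le R$); via \eqref{e:id3} this yields $B_n^{(k-2n)} \cdot \zeta^{k-2n} = 0$ for every $\zeta \in \mathcal{V}_\xi$, whence Theorem~\ref{thm:tensorstructure} gives a splitting $B_n^{(k-2n)} = I_2 \otimes B^{(k-2n-2)} + \xi \otimes B^{(k-2n-1)}$. Substituting back into $P_n$ and iteratively invoking the ``claim~1'' step from Lemma~\ref{l:full} --- using \eqref{e:derivativesoftensors} with $j=n$ and $r=s'$ running through $s'=1,2,\dots$ --- progressively raises the power of $\xi$ in the middle term until it either can be absorbed into $\xi^{s+1} \otimes C_{n+1}$ or vanishes outright. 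The key new observation that makes this work is that the extra term $\xi^{s+1}\otimes C_n$ is invisible to every contraction used: expanding $(\omega\cdot\nabla_\xi)^r[\xi^{s+1} \otimes C_n(\xi)]$ by Leibniz retains at least one factor of $\xi$ in every summand whenever $r \le s$, and since $\xi \cdot \zeta = \xi \cdot \omega = 0$ the contraction against $\zeta^{k-j} \otimes \omega^j$ then vanishes.

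The main obstacle, and the only delicate bookkeeping, is verifying that the constraint $r+j \le R$ is respected at every invocation of \eqref{e:derivativesoftensors}. The iterated claim~1 at level $n$ uses values of $r$ up to $\min(s, k-2n)$ together with $j=n$, so I would split into two cases. In \emph{case A}, $n \le k - R - 1$, the iteration is stopped at $s'=s+1$, at which point the middle term has the shape $I_2^n \otimes \xi^{s+1} \otimes D^{(k-2n-s-1)}$, rewrites as $\xi^{s+1} \otimes (I_2^n \otimes D^{(k-2n-s-1)})$, and is absorbed into a new $C_{n+1}^{(k-s-1)}$; the worst constraint here is $r+j = s+n \le R$. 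In \emph{case B}, $n \ge k - R$, the iteration instead terminates at $s' = k-2n+1$, where the middle term vanishes automatically because the auxiliary tensor has negative order; here the worst constraint is $r+j = (k-2n) + n = k-n \le R$. These two cases collectively cover every $n$ with $0 \le n \le R$, so the induction advances all the way to $P_{R+1}$ and the lemma follows.
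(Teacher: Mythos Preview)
Your argument is correct. The key ingredients---condition \eqref{e:derivativesoftensors}, the identities \eqref{e:id1}--\eqref{e:id3}, and the structure theorem---are exactly those used in the paper, and your case split into $n\le k-R-1$ versus $n\ge k-R$ correctly accounts for the constraint $r+j\le R$ at every step.

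The organization, however, is genuinely different from the paper's. You run the \emph{outer} induction on the power $n$ of $I_2$, carrying the final-form term $\xi^{s+1}\otimes C$ along from the start; your inner loop then raises the power of $\xi$ in the residual middle term until it either reaches $s+1$ (case A) or the auxiliary tensor drops to negative order (case B). The paper does the reverse: its outer induction is on the power of $\xi$, with the statement $\hat A^{(k)}=\xi^{s}\otimes B^{(k-s)}$ strengthened one step at a time for $s=0,\dots,2R+1-k$, and the inner loop raises the power of $I_2$ until that factor forces the auxiliary tensor to negative order. Your scheme has the advantage of mirroring the structure of Lemma~\ref{l:full} more closely (same outer variable, same shape of $P_n$ plus an extra term), which makes the parallel between the two lemmas transparent. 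The paper's scheme has the advantage that the outer hypothesis $\hat A^{(k)}=\xi^{s}\otimes B^{(k-s)}$ is simpler and the termination condition is uniform---one just runs the inner loop until the tensor vanishes, with no case split needed. Both routes arrive at the same conclusion with essentially the same work.
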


\begin{proof}
    As with the previous Lemma, we give an argument using two inductive claims. 
    Our  first claim, is that if we have 
    \begin{equation} 
    \label{e:pind1}
    \hat A ^ {(k)}(\xi) = \xi ^s \otimes I_2 ^n \otimes B^ { (k-s-2n)} 
    + \xi ^ { s+1} \otimes B^ { (k-s-1)}( \xi) 
    \end{equation}
    with $s+n\leq R$,  then we have \eqref{e:pind1} with $ n$ 
    replaced by $n+1$. To establish the first claim, 
    we apply \eqref{e:derivativesoftensors} with $ r= s$ and $j=n$, 
    use \eqref{e:pind1} and simplify 
    to obtain $(\omega\cdot\zeta)^nB^{(k-s-2n)}(\xi)\cdot\zeta^{k-s-2n}=0$. 
    Now, 
    Theorem \ref{thm:tensorstructure} tells that $ B^ { (k-2n-s)}(\xi) 
    = I_2 \otimes B^{(k-2n-2-s)}( \xi) + \xi \otimes B^ {(k-2n-1-s)}( \xi)$. Substituting 
    this expression into \eqref{e:pind1} and regrouping gives 
    this result with $s$ replaced by $s+1$. 

   Our second claim is that if for some $s=0, \dots , 2R+1-k$ we have 
    \begin{equation}
    \label{e:pind2}
        \hat A^{(k)}( \xi) = \xi ^s \otimes B^{(k-s)}( \xi) 
    \end{equation}
     then \eqref{e:pind2} holds with 
    $s$ replaced by $s+1$ and the same value of $k$. 

    To establish the second claim, we begin by using \eqref{e:derivativesoftensors} 
    with  $ r=s$ and $ j=0$. Note that   since   
    $R<k$, we  have that 
    $s\leq 2R+1-k \leq R$ and  we may use 
    \eqref{e:derivativesoftensors}.  Simplifying we conclude that
    $$
    c s! ( \omega \cdot \zeta )^s B^ { (k-s)}( \xi) \cdot \zeta ^ { k-s} =0.
    $$
    Using our tensor structure result, Theorem \ref{thm:tensorstructure}, 
    we obtain \eqref{e:pind1} with $s$ and $n=1$. From here, we apply 
    our first inductive claim for $ n=1, \dots, R-s$ 
    and obtain 
    $$
    \hat A ^ {(k)}(\xi) = \xi ^s \otimes I _2 ^ { R-s +1} \otimes 
    B^ { (k-s-2R+ 2s -2)} ( \xi) + \xi ^ { s+1} \otimes B^{(k-s-1)}. 
    $$
    Since  $ s\leq 2R+1-k $, the order of the tensor  $B^{(k-s-2R+2s-2)}$ satisfies
    $$
    k+ s -2R-2 \leq k + 2R+1-k-2R-2=-1. 
    $$
    Thus for some $n \leq R-s$, the first term is in fact zero and we obtain 
    \eqref{e:pind2} with $s$ replaced by $s+1$. 
    
    To establish the Lemma, we fix $k$ and observe that \eqref{e:pind2} 
    holds with $s=0$. From here we use the second inductive claim until we reach
    $s=2R+1-k$ and obtain the Lemma. 
\end{proof}

\begin{proof}[Proof of  Theorem~\ref{thm:uniquenessmaster}]
We let $\{A^{(k)} : k=0, \dots, k_0\}$ be a collection 
satisfying \eqref{eq:conditionmaster} with $ \xi, \zeta,\omega,r_1, r_2$ 
as in the Theorem. From here Lemma \ref{l:pieces} gives the condition \eqref{e:derivativesoftensors} with $ r+j \leq R$. Now Lemma \ref{l:full} gives our 
theorem in the case $k \leq R$ and Lemma \ref{l:partial} gives our result for the
case $R+1\leq k \leq 2R+1$. 
\end{proof}

\rv{
Before giving the final details for the proof of our main theorem, we explain our choice that the 
coefficients $A^{(k)}$ lie in   $\tilde W^{k-s,p}(\Omega)$. The coefficients enter into our 
construction of the CGO solutions 
in two ways. First, in the proof of Proposition \ref{p:CGOestimate}, we use that $A^{(k)}$ 
lies in $ \tilde W^ { k-s, p}( \Omega)$  with $p\geq 2$ in order to 
obtain the estimate \eqref{e:CoeffTerms}. 
This is natural since the $X^{-\lambda}$ spaces are based on the $L^2$ norm. We also need 
the condition  \eqref{e:CoeffCond} in order to use Proposition \ref{prop:aprioriestimate} to solve the 
equation $L_\zeta \psi = f$. The condition  $1/p < (m-s)/d$ allows us to 
use Sobolev embedding to obtain \eqref{e:CoeffCond}. We believe that \eqref{e:CoeffCond} 
involves the right order smoothness: the study of the $m$th power 
of the Laplacian requires that the coefficients $A^{(k)}$ lie in a Sobolev space of order
$k-m$ or that $ s= -m$.  Our averaging argument, 
Proposition \ref{p:CGOestimate}, puts a stronger restriction on the value of $s$
for which we can study the inverse problem. We expect 
that improving or replacing this argument will be a fruitful area of investigation. 
}

\begin{proof}[Proof of Theorem \ref{t:Main}]
We have two operators $L_\ell$, $\ell=1,2$ with coefficients as 
in the statement of this Theorem. 
Note that since 
the coefficient
$A_\ell ^{(k)}\in \tilde W^ {k-s , p}( \Omega)$,  then we 
may use a result of Brown and Gauthier  
\cite[Proposition A.3]{MR4455267} to show that coefficients also 
satisfy \eqref{e:CoeffCond}.
  Thus we may use 
Theorem \ref{t:MEHolds} to obtain  \eqref{e:MEHolds}. 

 From here, we may use Lemma \ref{l:pieces} to obtain \eqref{e:derivativesoftensors} 
with $ R = \lfloor m/2\rfloor$ (it is natural to make $R$ an integer). 
Now, Theorem \ref{thm:uniquenessmaster} gives that 
$ A^{(k)} = A_1 ^ {(k)}-A_2 ^{(k)}=0$ 
which gives our Theorem. 
\end{proof}

\subsection{Further questions}
\label{s:further}
We close with a few questions that this work leaves open. In this paper, we have made 
a good deal of progress in recovering lower order terms of operators that are
perturbations of the polyharmonic operator. However, there is much to be done. It seems
likely that the order of the Sobolev spaces is not optimal.  In general, we expect that
one should be able to prove uniqueness when the coefficient $A^{(k)}$ is in a 
Sobolev space of order $k-m$. Beyond this is the question of the optimal exponent $p$ which we
have done little to address.   We note that in our earlier work, two of us establish
uniqueness for operators of the form $ \Delta^2 +A^{(0)}$ when $A^{(0)}$ lies in 
a Sobolev space of order $-s$ for any $s<2$. Thus we come very close to the conjecture
stated above. 

We expect that the some of the restrictions on $k_0$ can be removed. We can construct
CGO solutions for at least some operators with a term of order $m$, $ A^{(m)}\cdot D^mu$. 
However we cannot establish the asymptotic estimates that we would need to establish
uniqueness. We do not have a specific conjecture as to what conditions would
allow us to prove uniqueness, but leave it to the
interested reader to consider this topic. 

\rv{
Another source of potential improvements comes from  comparing our results to what is known for $m=1$. For dimensions 
$d=3,4$ and $m=1$, Haberman \cite{MR3397029} has established
that we may recover a zeroth order coefficient $A^{(0)}$ which lies in the 
Sobolev space $W^{-1,d}( \reals ^d)$. Also see further results of Ham, Kwon and Lee \cite{MR4273826}. 
If $A^{(0)}$ is in  $W^{-1,d}$, then
the map 
$ u \rightarrow A^{(0)}u $ maps $W^ { 1,2}$ to $W^ { -1,2}$. To extend this to the polyharmonic operator
we need to find multiplier operators which map $W^{m,2}( \reals ^d)$ to $W^ { -m,2}(\reals ^d)$. If 
$A^ { (k) } \in W^ { k-m, d/m}( \reals ^d)$, then 
the map $u \rightarrow A^ {(k)} \cdot D^k u $ maps  $W^ { m,2}( \reals ^d)$ to $W^ { -m,2}(\reals ^d)$.  We conjecture that a sharp result for the inverse problem for the 
polyharmonic operator  of order $m$ would allow us to recover $A^ {(k)} $ in  $W^ {k -m, d/m}$ for $k=0,\dots, m-1$. 
To avoid complications, 
we have restricted our attention to $d>2m$ and compactly supported coefficients. 
}

Finally, our formulation of the problem with coefficients in $ A^{(k)} \in\tilde W^{k-s,p}(\Omega)$
is an assumption about the behavior of the coefficients near the boundary. This assumption
is convenient in that it allows us to proceed to the main inverse problem without
worrying about some sort of boundary identifiability result. It would be interesting to
have a better sense of the sort of assumptions on the coefficients 
near the boundary are appropriate from the
point of view of applications.

\newpage

\appendix 


\renewcommand{\thetheorem}{\thesection.\arabic{theorem}}
\renewcommand{\theequation}{\thesection.\arabic{equation}}

\section{Some Tensor Algebra}
\label{s:tensorcomp}
In this section, we  collect several computational results related to symmetric tensors.   
These results may be well known
to others, but we needed to develop these results for the 
computations in section \ref{s:uniq}. 
We will use  $\alpha',\alpha''$ to denote various multiindices and 
recall that as in
\eqref{e:ConDef}, 
$\alpha'\alpha''$ is the multindex obtained by concatenation. 

\begin{lemma}
\label{l:ConLem}
Let $j \leq k$ and suppose that $A\in S^{k}(\complexes ^d)$ is a symmetric 
$k$-tensor and $B$ is  an array (which may not be symmetric) 
indexed by multiindices of length $j$, then we have that 
$$ 
A \cdot B = A \cdot \sigma(B).
$$
\end{lemma}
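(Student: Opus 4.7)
The plan is to unfold both sides from the definitions and use the symmetry of $A$ to absorb the averaging that defines $\sigma(B)$. Fix a multiindex $\alpha$ of length $k-j$. By definition,
\[
(A\cdot\sigma(B))_\alpha \;=\; \sum_{|\beta|=j} A_{\alpha\beta}\,\sigma(B)_\beta
\;=\; \frac{1}{j!}\sum_{\pi\in\Pi(j)}\sum_{|\beta|=j} A_{\alpha\beta}\,B_{\pi(\beta)}.
\]
I will then reindex the inner sum by the change of variables $\gamma=\pi(\beta)$ (which is a bijection on multiindices of length $j$), giving
\[
(A\cdot\sigma(B))_\alpha \;=\; \frac{1}{j!}\sum_{\pi\in\Pi(j)}\sum_{|\gamma|=j} A_{\alpha\,\pi^{-1}(\gamma)}\,B_\gamma.
\]

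The key step is the invocation of the symmetry of $A$: since $A\in S^k(\complexes^d)$, permuting any subset of its indices leaves $A$ unchanged, and in particular $A_{\alpha\,\pi^{-1}(\gamma)}=A_{\alpha\gamma}$ for every $\pi\in\Pi(j)$. Substituting this, the inner sum becomes independent of $\pi$, and the average over $\Pi(j)$ collapses, yielding
\[
(A\cdot\sigma(B))_\alpha \;=\; \sum_{|\gamma|=j} A_{\alpha\gamma}\,B_\gamma \;=\; (A\cdot B)_\alpha.
\]
Since $\alpha$ is arbitrary, the two contractions agree as elements of $S^{k-j}(\complexes^d)$.

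I do not anticipate any genuine obstacle: the statement is a direct combinatorial consequence of symmetry of $A$. The only thing to be careful about is that $\Pi(j)$ here acts on multiindices of length $j$ (the last $j$ slots of $\alpha\beta$), and that the full symmetry of $A$ under $\Pi(k)$ in particular gives invariance under this subgroup action, which is exactly what makes the change of variables harmless.
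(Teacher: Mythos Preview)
Your proof is correct and follows essentially the same approach as the paper: expand $(A\cdot\sigma(B))_\alpha$, reindex the sum via $\gamma=\pi(\beta)$, and use the symmetry of $A$ to collapse the average over $\Pi(j)$. The only cosmetic difference is that the paper leaves the change of variables implicit, whereas you spell it out.
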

\begin{proof}
    For $ \alpha$ a multiindex of length $k-j$ we have 
    \[
    \begin{split}
     (A \cdot \sigma(B) ) _ \alpha  &=\frac 1 { j!}\sum _{ |\beta| = j} 
     \sum _ { \pi \in \Pi(j)} A _ { \alpha\beta}  B_{ \pi (\beta)} \\
        & = \frac 1 { j!}\sum _{ |\beta| = j} 
        \sum _ { \pi \in \Pi(j)}A _ { \alpha \pi^{-1}(\beta)}  
        B_{ \beta}\\
        & = \frac 1 { j!}\sum _{ |\beta| = j} 
        \sum _ { \pi \in \Pi(j)}A _ { \alpha\beta}  B_{ \beta}\\
        & = \sum _{ |\beta| = j} A _ { \alpha\beta}  B_{ \beta} = (A\cdot B)_\alpha. 
    \end{split}
   \]
    The third equality follows since the tensor $A$ is symmetric and thus we 
    have $ A_{\alpha\pi^{-1}(\beta)}= A_{\alpha\beta}$. 
\end{proof}

\begin{lemma} 
\label{lem:Triplecontraction}
Let $A \in S^k(\complexes^d), B \in S^s(\complexes^d), C\in S^{k-s}(\complexes^d)$. Then
\[
(B \otimes C )\cdot A=\sum _{|\alpha'|=s, |\alpha''|
= k-s} B_{\alpha'} C_{\alpha''} A_{\alpha'\alpha''} 
\]
\end{lemma}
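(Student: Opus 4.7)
The plan is to reduce the identity to a direct, unsymmetric sum using the previous lemma (Lemma A.1). The key observation is that, by the definition of the symmetric tensor product in \eqref{eq:deftensorproduct}, the tensor $B\otimes C$ is precisely the symmetrization of the (in general non-symmetric) array $T$ with entries
\[
T_{\alpha'\alpha''} = B_{\alpha'} C_{\alpha''}, \qquad |\alpha'|=s,\ |\alpha''|=k-s.
\]
Thus $B\otimes C = \sigma(T)$.

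Next, I would use Lemma \ref{l:ConLem}. Since both $B\otimes C$ and $A$ are tensors of order $k$, the contraction $(B\otimes C)\cdot A$ is a scalar, and by symmetry of the defining sum in \eqref{e:ConDef} we have $(B\otimes C)\cdot A = A\cdot (B\otimes C)$. Applying Lemma \ref{l:ConLem} with the symmetric tensor $A$ and the array $T$ (whose symmetrization is $B\otimes C$) yields
\[
(B\otimes C)\cdot A \;=\; A\cdot \sigma(T) \;=\; A\cdot T.
\]

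Finally, expanding $A\cdot T$ directly from the definition \eqref{e:ConDef} of contraction gives
\[
A\cdot T \;=\; \sum_{|\beta|=k} A_\beta T_\beta \;=\; \sum_{|\alpha'|=s,\ |\alpha''|=k-s} A_{\alpha'\alpha''} B_{\alpha'} C_{\alpha''},
\]
which is the claimed identity. There is no real obstacle here beyond bookkeeping; the only subtle point is the legitimacy of replacing $\sigma(T)$ by $T$ inside a contraction against the symmetric tensor $A$, and this is exactly the content of Lemma \ref{l:ConLem}.
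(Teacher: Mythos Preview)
Your proof is correct and follows essentially the same approach as the paper: both observe that $B\otimes C$ is the symmetrization of the array $T_{\alpha'\alpha''}=B_{\alpha'}C_{\alpha''}$ and then invoke Lemma~\ref{l:ConLem} to drop the symmetrization when contracting against the symmetric tensor $A$. The only difference is that you make explicit the (trivial) commutation $(B\otimes C)\cdot A = A\cdot(B\otimes C)$, which the paper leaves implicit.
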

\begin{proof}
Given a multiindex $ \alpha$ with $|\alpha|=k$, we may write 
$ \alpha = \alpha'\alpha''$ where 
$ \alpha'$ is the first $s$ entries in $ \alpha$. Then the components of 
the  tensor product 
$B\otimes C$ may be obtained by symmetrizing the array 
$ (B_{\alpha'}C_{\alpha''})_{\alpha}$.  
With this observation, the Lemma follows from Lemma \ref{l:ConLem}. 
\end{proof}


From a $k$-tuple of vectors $\eta=(\eta_1,\eta_2, \dots, \eta_k) \in (\complexes ^d)^k$
we can form $\eta_1 \otimes \dots \otimes \eta_k \in S^k ( \complexes^d)$ 
but also tensor products of order $s$
by consider subsets of $s$ vectors. We will use the letter 
$J=\{j_1,j_2, \dots, j_s\}$ 
for the set of indices of the selected vectors  and the  
complement of $J$ in $\{1, \dots, k\}$
will be  $J^c$. We denote by $\# J$, the cardinality of $J$. 
Then
$\eta^J=\eta_{j_1} \otimes \eta_{j_2}\otimes \dots \otimes \eta_{j_s}$. 
For example, if 
$k=7$, $J= \{2,3,6\}, J^c=\{1,4,5,7\} $, then 
$ \eta^J = \eta_2\otimes \eta _3 \otimes \eta _6$ and $ \eta ^ { J^c} 
= \eta _1 \otimes \eta _4\otimes \eta _5 \otimes \eta _7$. 
This notation will be used in the  
following Lemma gives an expression for the components of the tensor 
product $ \eta _1\otimes \dots \otimes \eta _k$.

\begin{lemma} \label{l:tenprod}
Let $\eta \in (\complexes^d)^k$
and $\alpha',\alpha''$, fixed multiindices of length  $s$ and  $k-s$.   Then
\[ 
\sum_{\#J=s} (\eta^J)_{\alpha'} (\eta^{J^c})_{\alpha''}
=\binom{k}{s}(\eta_1\otimes \dots \otimes \eta _k)_{\alpha'\alpha''}  
\]
\end{lemma}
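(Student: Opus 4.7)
The plan is to unfold both sides using the definition of the symmetrization operator $\sigma$ and reduce the identity to the combinatorial fact $k! = \binom{k}{s}\,s!\,(k-s)!$, which pairs permutations of $\{1,\dots,k\}$ with triples consisting of a size-$s$ subset $J$ together with internal permutations of $J$ and $J^c$.

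First I would write the right-hand side by definition as
\[
(\eta_1\otimes\cdots\otimes\eta_k)_{\alpha'\alpha''} = \frac{1}{k!}\sum_{\pi\in\Pi(k)}\prod_{i=1}^{k}\eta_i^{(\alpha'\alpha'')_{\pi(i)}}.
\]
For each $\pi$, set $J_\pi=\{i:\pi(i)\le s\}$, which is a subset of size $s$. Partitioning the sum over $\Pi(k)$ according to the value of $J_\pi$, the fiber over a given $J$ consists of exactly $s!(k-s)!$ permutations, each specified by a bijection $J\to\{1,\dots,s\}$ and a bijection $J^c\to\{s+1,\dots,k\}$. Since $(\alpha'\alpha'')_j=\alpha'_j$ when $j\le s$ and $(\alpha'\alpha'')_j=\alpha''_{j-s}$ when $j>s$, the contribution from the fiber over $J$ factors as a product of a sum over bijections $\tau:J\to\{1,\dots,s\}$ and a sum over bijections $\tau':J^c\to\{s+1,\dots,k\}$.

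Next I would relabel each of those inner sums against the ordering of $J=\{j_1<\dots<j_s\}$ and of $J^c$. This identifies the first inner sum with $s!\,(\eta^J)_{\alpha'}$ and the second with $(k-s)!\,(\eta^{J^c})_{\alpha''}$, straight from the definition of the symmetrized tensor products $\eta^J$ and $\eta^{J^c}$. Assembling the pieces gives
\[
(\eta_1\otimes\cdots\otimes\eta_k)_{\alpha'\alpha''} = \frac{s!(k-s)!}{k!}\sum_{\#J=s}(\eta^J)_{\alpha'}(\eta^{J^c})_{\alpha''},
\]
and multiplying by $\binom{k}{s}=k!/(s!(k-s)!)$ yields the claim.

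There is no serious analytic obstacle here; the exercise is purely combinatorial bookkeeping. The only point where care is required is the reindexing of bijections $J\to\{1,\dots,s\}$ as elements of $\Pi(s)$ (and similarly for $J^c$), so that the inner sums match the defining formulas for $(\eta^J)_{\alpha'}$ and $(\eta^{J^c})_{\alpha''}$ with the correct factorial normalizations. Once that identification is made, everything collapses to the binomial identity.
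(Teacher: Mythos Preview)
Your argument is correct and is essentially the same as the paper's: both proofs decompose $\Pi(k)$ into the $\binom{k}{s}$ fibers of size $s!(k-s)!$ indexed by the size-$s$ subset $J$ (for you, $J_\pi=\{i:\pi(i)\le s\}$; in the paper, via a chosen coset representative $\pi_J$), and then identify each fiber's contribution with $s!(k-s)!\,(\eta^J)_{\alpha'}(\eta^{J^c})_{\alpha''}$. The only cosmetic difference is that the paper runs the computation from the $J$-sum toward the full symmetrization while you run it in the opposite direction; your reindexing of bijections $J\to\{1,\dots,s\}$ as elements of $\Pi(s)$ is exactly the step the paper encodes in its coset decomposition \eqref{e:Pkdecomp}.
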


\begin{proof}
    We begin with an observation about the group $ \Pi(k)$. For each subset 
    $J \subset \{1, \dots, k\}$ with $ \#J=s$, we choose a permutation 
    $ \pi_J$ with the property that 
    $ \pi_J(\{1, \dots, s\})= J$.  Once these permutations have 
    been chosen, then observe that we can write 
    \begin{equation} 
    \label{e:Pkdecomp}
        \Pi(k) = \cup _J ( \Pi(s) \times \Pi (k-s))\circ \pi _J
    \end{equation}
    and this is a disjoint union. Note that our choice of $ \pi _J$ 
    is not unique, but  the   set of permutations 
    $ ( \Pi(s) \times \Pi(k-s)) \circ \pi _J$ depends only on $J$ 
    and not our choice of $\pi _J$. 

    Next fix a set $J = \{ j_1, \dots, j_s\}$ and we index the 
    complement $J^c=\{\ell_1, \dots, \ell_{ k-s}\}$. For a 
    multiindex $\alpha= \alpha' \alpha''$ with $|\alpha'|=s$ 
    and $|\alpha''|= k-s$, we can write
    \begin{multline}
    \label{e:expj}
    \eta^J_ {\alpha'} \eta^{J^c}_{\alpha''}= \frac 1 { s! (k-s)!}
    (\sum_{ \pi \in \Pi(s) } 
    \eta_{j_{\pi(1)}, \alpha'_1}\dots \eta_{j_{\pi(s)}, \alpha'_{s}})
    ( \sum_{ \pi \in \Pi(k-s) } \eta_{\ell_{\pi(1)}, \alpha''_1}
    \dots \eta_{\ell_{\pi(k-s)}, \alpha''_{k-s}})\\
    = \frac 1 {s!(k-s)!}\sum _{\pi \in ( \Pi(s) \times \Pi(k-s))
    \circ { \pi _J}} \eta _{ \pi(1), \alpha_1 }\dots \eta _ { \pi (k), \alpha_k}
    \end{multline}
    If we sum \eqref{e:expj} over all subsets $J \subset \{1,\dots,s\}$ with $\#J=s$ use the decomposition 
    of $ \Pi (k)$ in \eqref{e:Pkdecomp}, we obtain that 
    $$
    \sum _ {\{ J : \#J =s\}} \eta ^ J _ { \alpha'}\eta^{ J^c}_{\alpha ''} 
    = \frac 1{s!(k-s)!}\sum _{\pi \in \Pi(k)}\eta_{\pi(1), \alpha _1}\dots \eta _{ \pi(k), \alpha_k} 
    = \binom k s ( \eta_1\otimes \dots \otimes \eta _k)_ \alpha. 
    $$
\end{proof}

\begin{lemma}
\label{l:tproduct} 
If $B \in S^{s}(\complexes^d), C \in S^{k-s}(\complexes^d)$ and  
$\eta \in (\complexes^d)^k$. For every $1 \le s \le k$
\[
\binom{ k}{s}(B \otimes C) \cdot ( \eta _1 \otimes \dots \otimes \eta_k )
=\sum_{\# J=s}B \cdot  \eta ^J  \,  C \cdot \eta ^  {J^c}
\]
\end{lemma}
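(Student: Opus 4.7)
The plan is to combine the two preceding lemmas (Lemma~\ref{lem:Triplecontraction} and Lemma~\ref{l:tenprod}) with a straightforward interchange of summations. Since $B \otimes C$ is a symmetric $k$-tensor, Lemma~\ref{lem:Triplecontraction} lets us write the left-hand side as an unsymmetrized double sum indexed by multiindices $\alpha'$ of length $s$ and $\alpha''$ of length $k-s$:
\[
(B \otimes C) \cdot (\eta_1 \otimes \cdots \otimes \eta_k)
= \sum_{|\alpha'|=s,\, |\alpha''|=k-s} B_{\alpha'}\, C_{\alpha''}\, (\eta_1 \otimes \cdots \otimes \eta_k)_{\alpha'\alpha''}.
\]

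Next I would substitute the identity from Lemma~\ref{l:tenprod}, which expresses $\binom{k}{s}(\eta_1\otimes\cdots\otimes\eta_k)_{\alpha'\alpha''}$ as $\sum_{\#J=s}(\eta^J)_{\alpha'}(\eta^{J^c})_{\alpha''}$. Multiplying both sides by $\binom{k}{s}$ and using this identity gives
\[
\binom{k}{s}(B \otimes C) \cdot (\eta_1 \otimes \cdots \otimes \eta_k)
= \sum_{\alpha',\alpha''}\sum_{\#J=s} B_{\alpha'}\, C_{\alpha''}\, (\eta^J)_{\alpha'}\, (\eta^{J^c})_{\alpha''}.
\]

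Finally, I would interchange the order of summation, pulling the sum over $J$ outside, and recognize that the inner double sum factors as a product of two contractions:
\[
\sum_{|\alpha'|=s} B_{\alpha'}(\eta^J)_{\alpha'} = B \cdot \eta^J, \qquad
\sum_{|\alpha''|=k-s} C_{\alpha''}(\eta^{J^c})_{\alpha''} = C \cdot \eta^{J^c},
\]
which yields the desired formula.

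There is essentially no obstacle here; the lemma is a direct corollary of the two preceding results. The only minor care needed is to check that Lemma~\ref{lem:Triplecontraction} applies to the possibly nonsymmetric array $(\eta_1\otimes\cdots\otimes\eta_k)$ before symmetrization, but Lemma~\ref{l:ConLem} was stated precisely to handle this: since $B\otimes C$ is symmetric, contraction against the unsymmetrized array $(\eta_{1,\alpha_1}\cdots\eta_{k,\alpha_k})$ gives the same result as contraction against its symmetrization, so the expansion used in the first step is justified.
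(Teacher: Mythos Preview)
Your proof is correct and follows essentially the same route as the paper's: both arguments combine Lemma~\ref{lem:Triplecontraction} and Lemma~\ref{l:tenprod} via an interchange of summation, the only difference being that the paper starts from the right-hand side and works toward the left, while you go in the opposite direction. Your closing remark about Lemma~\ref{l:ConLem} is slightly overcautious, since $\eta_1\otimes\cdots\otimes\eta_k$ is by definition already the symmetrized product and hence lies in $S^k(\complexes^d)$, so Lemma~\ref{lem:Triplecontraction} applies directly.
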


\begin{proof}
We start by writing the definition of 
\[
 B \cdot \eta^J=\sum_{\alpha'} B_{\alpha'} (\eta^J)_{\alpha'}
\]
and thus 
\[(B \cdot \eta^J) (C \cdot \eta^{J^c})=(\sum_{\alpha'} B_{\alpha'} (\eta^J)_{\alpha'})
(\sum_{\alpha''} C_{\alpha''} (\eta^{J^c})_{\alpha''} )
=\sum_{\alpha',\alpha''} B_{\alpha'}
C_{\alpha''} (\eta^J)_{\alpha'} (\eta^{J^c})_{\alpha''}
\]
Summing on $J$, we have
\[
\sum_{\# J=s} (B \cdot \eta^J) (C \cdot \eta^{J^c})=\sum_{\# J=s} 
\sum_{|\alpha'|=s,|\alpha''|=k-s } 
B_{\alpha'} C_{\alpha''} (\eta^J)_{\alpha'} (\eta^{J^c})_{\alpha''}
\]
which equals to 
\[
\sum_{|\alpha'|=s,|\alpha''|=k-s } 
B_{\alpha'} C_{\alpha''} \sum_{\#J=s} (\eta^J)_{\alpha'} (\eta^{J^c})_{\alpha''} 
\]
and by Lemma~{\ref{l:tenprod}} this equals to 
\[
   \binom{k}{s} \sum_{\alpha',\alpha''} B_{\alpha'} C_{\alpha''} 
   (\eta_1\otimes\dots\otimes \eta_k)_{\alpha' \alpha''}
\]
and by Lemma~\ref{lem:Triplecontraction} with $A=\eta_1\otimes \dots \eta_k$ the
last expression equal to 
\[\binom{k}{s} B \otimes C \cdot (\eta_1\otimes \dots \otimes \eta_k) \]
as claimed.
\end{proof}

\begin{lemma}
Suppose $ \zeta \in {\cal V}$, $ \omega \in \complexes ^d$, 
$A^{(k)}\in S^k(\complexes^d)$ and $B^{(j)}\in S^j(\complexes^d)$
    \begin{align}
    \label{e:id1}
        (A^{(k)}\otimes B^{(j)}) \cdot \zeta ^{k+j} 
        &= (A\cdot \zeta ^k)(B\cdot\zeta^j)  \\
        \label{e:id2}
       (A^{(k)} \otimes I_2^n) \cdot ( \zeta ^{k+2n-j} \otimes \omega ^j) &=0 , \qquad j<  n   \\
       \label{e:id3}
           (A^{(k)} \otimes I_2^n) \cdot ( \zeta ^{k+n} 
           \otimes \omega ^n) &=c (\omega\cdot \zeta)^n A^{(k)}\cdot \zeta ^k
    \end{align}
    In the third result, $c=c(k,n)$ is a positive, nonzero constant. 
\end{lemma}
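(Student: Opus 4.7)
The plan is to deduce all three identities from Lemma~\ref{l:tproduct}, which expresses the contraction of a symmetric tensor product $B \otimes C$ against a tensor product of vectors as a sum indexed by subsets $J$. For \eqref{e:id1}, I would apply Lemma~\ref{l:tproduct} with $\eta_1 = \cdots = \eta_{k+j} = \zeta$. Every size-$k$ subset $J$ then yields $\eta^J = \zeta^k$ and $\eta^{J^c} = \zeta^j$, so the right-hand side collapses to $\binom{k+j}{k}(A^{(k)} \cdot \zeta^k)(B^{(j)} \cdot \zeta^j)$, which cancels the binomial coefficient on the left.

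The heart of the proof of \eqref{e:id2} and \eqref{e:id3} is the auxiliary fact that, whenever $p + q = 2n$,
\[
I_2^n \cdot (\zeta^p \otimes \omega^q) = 0 \text{ if } p > n, \qquad
I_2^n \cdot (\zeta^n \otimes \omega^n) = c_n (\omega \cdot \zeta)^n
\]
for some positive constant $c_n$. I would prove this by induction on $n$, applying Lemma~\ref{l:tproduct} with $B = I_2$ and $C = I_2^{n-1}$. The resulting sum over size-$2$ subsets splits into three types of contributions according to whether the paired vectors are $(\zeta,\zeta)$, $(\zeta,\omega)$, or $(\omega,\omega)$. The $(\zeta,\zeta)$ terms vanish directly from $\zeta\cdot\zeta = 0$, while under the hypothesis $p > n$ the remaining two types involve contractions of $I_2^{n-1}$ with either $\zeta^p \otimes \omega^{q-2}$ or $\zeta^{p-1} \otimes \omega^{q-1}$, each of which still contains strictly more than $n-1$ copies of $\zeta$ and so vanishes by the inductive hypothesis. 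When $p = q = n$, the $(\omega,\omega)$ piece vanishes by the first part applied with $n-1$, and only the $(\zeta,\omega)$ piece survives, yielding $n^2 (\zeta \cdot \omega)\, I_2^{n-1} \cdot (\zeta^{n-1} \otimes \omega^{n-1})$, from which the recursion produces the claimed positive multiple of $(\omega\cdot\zeta)^n$.

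With this auxiliary fact in hand, I would apply Lemma~\ref{l:tproduct} to $(A^{(k)} \otimes I_2^n) \cdot (\zeta^{k+2n-j} \otimes \omega^j)$ with $B = A^{(k)}$, $C = I_2^n$, and the $\eta_i$'s chosen to be $k+2n-j$ copies of $\zeta$ followed by $j$ copies of $\omega$. Grouping size-$k$ subsets $J$ by $a = \#(J \cap \{1,\ldots,k+2n-j\})$ gives $\eta^J = \zeta^a \otimes \omega^{k-a}$ and $\eta^{J^c} = \zeta^{k+2n-j-a} \otimes \omega^{j-k+a}$, each occurring with multiplicity $\binom{k+2n-j}{a}\binom{j}{k-a}$. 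By the auxiliary fact, only terms with $k+2n-j-a \leq n$, i.e.\ $a \geq k+n-j$, survive. When $j < n$ this forces $a > k$, which is impossible, establishing \eqref{e:id2}. When $j = n$ only $a = k$ contributes, leaving exactly
\[
\tbinom{k+n}{k}(A^{(k)}\cdot \zeta^k)\bigl(I_2^n \cdot (\zeta^n \otimes \omega^n)\bigr),
\]
which by the auxiliary fact is a positive multiple of $(\omega\cdot\zeta)^n(A^{(k)}\cdot \zeta^k)$, establishing \eqref{e:id3}.

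The main obstacle I anticipate is the combinatorial bookkeeping in the subset decomposition: correctly identifying the multiplicity $\binom{k+2n-j}{a}\binom{j}{k-a}$ and tracking the distribution of $\zeta$'s and $\omega$'s between $\eta^J$ and $\eta^{J^c}$ as $a$ varies. Once this is set up carefully, the vanishing $\zeta\cdot\zeta = 0$ together with the inductive argument for $I_2^n \cdot (\zeta^p \otimes \omega^q)$ does all the remaining work, and the final conclusions reduce to identifying the unique surviving index $a$.
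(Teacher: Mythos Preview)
Your argument is correct, and in spirit it is close to the paper's: all three identities ultimately rest on $\zeta\cdot\zeta=0$ together with the subset/permutation expansion of a contraction. The main organizational difference is in how \eqref{e:id2} and \eqref{e:id3} are handled.

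For \eqref{e:id2} the paper does not pass through your auxiliary fact about $I_2^n\cdot(\zeta^p\otimes\omega^q)$. Instead it uses Lemma~\ref{l:ConLem} to write $(A^{(k)}\otimes I_2^n)\cdot(\zeta^{k+2n-j}\otimes\omega^j)$ directly as an average over $\pi\in\Pi(k+2n)$ of $(A^{(k)}\cdot(\eta_{\pi(1)}\otimes\cdots\otimes\eta_{\pi(k)}))\prod_{\ell=1}^n \eta_{\pi(k+2\ell-1)}\cdot\eta_{\pi(k+2\ell)}$, and then observes by pigeonhole that with only $j<n$ copies of $\omega$ available, at least one of the $n$ dot products is $\zeta\cdot\zeta=0$. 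For \eqref{e:id3} the paper does use Lemma~\ref{l:tproduct} as you do, but evaluates $I_2^n\cdot\eta^J$ for each $J$ by a single direct count of permutations giving all dot products equal to $\omega\cdot\zeta$, rather than by your induction on $n$.

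So the paper's route is a bit more ad hoc (two separate arguments), while yours is more uniform: one inductive lemma about $I_2^n\cdot(\zeta^p\otimes\omega^q)$ feeds both \eqref{e:id2} and \eqref{e:id3} through the same Lemma~\ref{l:tproduct} decomposition. The paper's pigeonhole for \eqref{e:id2} is shorter, but your approach makes the structure behind the constant $c(k,n)$ in \eqref{e:id3} more transparent via the recursion $c_n=\frac{n}{2n-1}c_{n-1}$. Either way the bookkeeping you flagged (the multiplicities $\binom{k+2n-j}{a}\binom{j}{k-a}$ and the surviving index $a=k$) is exactly right.
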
  

\begin{proof}
The   tensor $A^{(k)}\otimes B^{(j)}$ is obtained as the symmetrization of
the array $ C_{\alpha\beta}= A^{(k)}_\alpha B^{(j)} _\beta$ for 
multiindices $\alpha,\beta$ 
with $|\alpha|=k$
and $|\beta|=j$. We also have $ \zeta^{k+j} _{ \alpha\beta}
= \zeta_{\alpha_1}\dots \zeta_{\alpha_k}\zeta_{\beta_1}\dots \zeta_{\beta_j}
= \zeta^k_\alpha \zeta^j_\beta$. Using Lemma \ref{l:ConLem}, we have
$$ 
(A^{(k)}\otimes B^{(j)}) \cdot \zeta^{ k+j}= \sum _ { |\alpha|=k, |\beta|=j}
A^{(k)}_\alpha \zeta _{\alpha}^k B_\beta^{(j)}\zeta ^j_\beta 
= (A^{(k)}\cdot \zeta^k )(B^{(j)}\cdot \zeta ^j). 
$$
This gives the first identity, \eqref{e:id1}. 

For the second result, we define $ (\eta_1, \dots, \eta_{ k+2n})$ by 
$ \eta_\ell = \zeta$ for $\ell = 1, \dots, k+2n-j$ and $ \eta _ \ell = \omega$ 
for $ \ell = k+2n-j+1, \dots, k+2n$.  Using Lemma \ref{l:ConLem}, we have that 
\begin{multline*}
(A^{(k)} \otimes I_2^n)\cdot ( \zeta^{k+2n-j}\otimes \omega^{ j})
= \frac 1 {(k+2n)!} \sum _ { \pi\in\Pi(k+2n)} (A^ {(k)}  \cdot ( \eta_{\pi(1)} 
\otimes\dots \otimes \eta_{\pi(k)}) \\
\times \eta_{\pi(k+1)}
\cdot \eta_{\pi(k+2)} \dots \eta_{ \pi ( k+2n-1)}\cdot \eta_{ \pi (k+2n)} . 
\end{multline*}
Since there are only $j$ copies of $ \omega$ and $j<n$, for each 
permutation $\pi$ at least one of the dot products 
$ \eta _{\pi(k+2\ell-1)} \cdot \eta _{\pi(k+ 2\ell)}$  will 
be  $ \zeta \cdot \zeta=0$. Thus each term in the above sum is zero 
and \eqref{e:id2} is proved. 

To establish the third identity \eqref{e:id3}, we let 
$ \eta_1, \dots, \eta_{k+2n}$ be given by 
$$
\eta_\ell = \begin{cases} \zeta, \qquad & \ell= 1, \dots, k+n\\
\omega , \qquad & \ell = k+n+1,\dots , k + 2n.
\end{cases}
$$
For a set of indices $ J= \{ j_1, \dots, j_{2n} \} 
\subset \{ 1, \dots, k+2n\}$, we have
$$
I _2^n \cdot \eta ^J= \frac 1 { (2n)!} 
\sum _{ \pi \in \Pi(2n)} \eta_{j_{\pi(1)}}\cdot \eta_{j_{\pi(2)}} \dots 
\eta_{j_{\pi(2n-1)}}\cdot \eta _ { j _ {\pi (2n)}}
$$
Unless we have exactly one copy of $ \omega$ in each dot product, then the 
resulting summand will vanish and in the circumstance where each dot product 
is $ \omega \cdot \zeta$ or $\zeta \cdot\omega$, the summand is 
$ (\omega\cdot \zeta)^n$. Using Lemma \ref{l:tproduct}, we conclude that 
$$
\binom{k+2n}{n} ( A^{(k)}\otimes I_2 ^n ) \cdot (\zeta ^ { k+n} \otimes \omega ^n)
= \binom{k+n}n \frac { 2^n n!}{(2n)!} A^ { (k)}\cdot \zeta ^k ( \omega\cdot \zeta)^n. 
$$
The combinatorial expression on the right represents the number of subsets 
$J$ with $ \#J=2n$ which contain the indices $\{k+n+1, \dots k+2n\}$ for which  
$ \eta _\ell = \omega$ multiplied by the number of permutations of the set $J$ 
which result in all dot products being equal to $\zeta \cdot \omega$ $(2n)!$ 
from averaging over the permutation group.   The result \eqref{e:id3} follows. 
\end{proof}

\newpage


\bibliographystyle{plain}



\medskip
\small 17 April 2026
\end{document}